\newtheorem{theorem}{Theorem}[section]
\newtheorem{corollary}[theorem]{Corollary}
\newtheorem{lemma}[theorem]{Lemma}
\theoremstyle{definition}
\newtheorem{definition}[theorem]{Definition}
\newtheorem{example}[theorem]{Example}
\newtheorem{remark}[theorem]{Remark}
\newcommand{\sfiven}[1]{{\rm Alt}_{#1}}
\newcommand{\CS}{\Gamma}
\newcommand{\bne}{-}
\newcommand{\eqm}{\approx}
\newcommand{\I}{{\bf i}}
\newcommand{\C}{{\bf c}}
\newcommand{\Kur}{{\bf Kur}}
\newcommand{\baire}[1]{{\bf Baire}(#1)}
\newcommand{\bquot}[1]{\wp({#1}) / \meager }
\newcommand{\meager}{{\bf M}}
\newcommand{\power}[1]{2^{#1}}
\newcommand{\B}{\Box}
\newcommand{\D}{\Diamond}
\newcommand{\fr}{\mathfrak}
\newcommand{\lang}[2]{{\mathcal L}^{#1}_{#2}}
\newcommand{\lb}{\llbracket}
\newcommand{\rb}{\rrbracket}
\newcommand{\val}[1]{\lb {#1} \rb}
\begin{document}

\title{The Baire closure and its logic}

\author{Guram Bezhanishvili}
\affil{Department of Mathematical Sciences\\
New Mexico State University\\
1290 Frenger Mall\\
MSC 3MB / Science Hall 236\\
Las Cruces, New Mexico 88003-8001
USA\\
{\tt guram@nmsu.edu}}

\author{David Fernández-Duque}
\affil{Department of Philosophy\\
University of Barcelona\\
C/ de Montalegre 6-8\\
08001 Barcelona\\
Spain\\
{\tt fernandez-duque@ub.edu}}

\maketitle

\begin{abstract}
The Baire algebra of a topological space $X$ is the quotient of the algebra of all subsets of $X$ modulo the meager sets.
We show that this Boolean algebra can be endowed with a natural closure operator, resulting in a closure algebra which we denote $\baire X$.
We identify the modal logic of such algebras to be the well-known system $\sf S5$, and prove soundness and strong completeness for the cases where $X$ is crowded and either completely metrizable and continuum-sized or locally compact Hausdorff.
We also show that every extension of $\sf S5$ is the modal logic of a subalgebra of $\baire X$, and that soundness and strong completeness also holds in the language with the universal modality.
\end{abstract}

\section{Introduction}

Canonical examples of Boolean algebras include the powerset $\wp(X)$ of a set $X$, as well as its subalgebras; indeed, every Boolean algebra is isomorphic to a subalgebra of a powerset algebra 
(see, e.g., \cite[p.~28]{Kop89}). 
Alternately, one can consider quotients of the form $\wp(X)/ {\bf I}$, where $\bf I$ is a suitable ideal of $\wp(X)$.
Some of the most familiar such ideals are the ideal $\mathbf N$ of null sets when $X$ is a measure space 
(see, e.g., \cite[p.~233]{Kop89}), or the ideal $\meager$ of meager sets when $X$ is a topological space 
(see, e.g., \cite[p.~182]{Kop89}); recall that a set is {\em nowhere dense} if the interior of its closure is empty, and that it is {\em meager} if it is a countable union of nowhere dense sets.
The quotient $\wp (X) / \meager $ gives rise to the {\em Baire algebra} of $X$.

When $X$ is a topological space, the powerset algebra of $X$ comes equipped with the usual closure operator $\C \colon \wp(X) \to \wp(X)$.
This operator satisfies some familiar properties also known as the {\em Kuratowski axioms,} including e.g.~$A\subseteq \C A$ (see Section \ref{secPrel} for the full list), and any operator satisfying these axioms uniquely determines a topology on $X$.
One can more generally consider a Boolean algebra $B$ with an operator $\C \colon B \to B$ satisfying the same axioms; such algebras are the {\em closure algebras} of McKinsey and Tarski \cite{MT44}.
It is then a natural question to ask whether a closure operator on $\wp(X)$ carries over to quotients $\wp(X)/{\bf I}$ in a meaningful way.
This question has already been answered in the affirmative by Fern\'andez-Duque \cite{Fer10} and Lando \cite{Lan12} in the setting of the {\em Lebesgue measure algebra,} defined as the quotient of the Borel sets of reals modulo the 
null sets.
In an unpublished work, Bjorndahl has also considered validity modulo the nowhere dense sets, although he does not work directly with the algebraic quotient.
In this article, we will explore this question in the setting of the Baire algebra of a topological space $X$.

One subtlety when defining a closure operator for such quotients is that, denoting the equivalence class of $Y$ by $[Y]$, the definition $\C[Y] : = [\C Y]$ does not yield a well-defined operation: for example, in the above-mentioned quotients, $[\mathbb Q] = [\varnothing]$ (as $\mathbb Q$ is both meager and of measure zero), yet $[\C\mathbb Q] = [\mathbb R]\neq [\varnothing] = [\C \varnothing]$.
Instead, we must compute the closure of a set directly within the quotient, by the expression
\[ 
\C  a   = \inf \{ [C] :  a   \sqsubseteq [C] \text{ and }C\text{ is closed} \} , 
\]
where $\sqsubseteq$ denotes the partial order on the quotient algebra. We refer to the element $\C a$ as the {\em Baire closure} of $a$. 
We utilize a nontrivial fact in topology that the Boolean algebra of Borel sets of an arbitrary topological space modulo the ideal of meager sets is complete (see \cite[p.~75]{Sik60})
to show that this produces a closure operator on the Baire algebra of {\em any} topological space $X$, and we denote the resulting closure algebra by $\baire X $.
\color{black}

Much as Boolean algebras provide semantics for propositional logic, closure algebras provide semantics for {\em modal logic,} which extends propositional logic with an operator $\D  $ that we will interpret as a closure operator, along with its dual $\B$, interpreted as interior.
Such topological semantics of modal logic, as well as the closely related intuitionistic logic, predates their now-widespread relational semantics. For intuitionistic logic it was first developed by Stone \cite{Sto37b} and Tarski \cite{Tar38}, and for modal logic by Tsao-Chen \cite{TC38}, McKinsey \cite{McK41}, and McKinsey and Tarski \cite{MT44}.
Under this interpretation, the modal logic of all topological spaces turns out to be the well-known modal system of Lewis, $\sf S4$ (see Section \ref{secPrel} for the definition). Other well-known extensions of $\sf S4$ also turn out to be the modal logics of interesting topological spaces. To give a couple of examples:
\begin{itemize}
\item ${\sf S4.2}:= {\sf S4} + \Diamond\Box p\to\Box\Diamond p$ is the modal logic of all extremally disconnected spaces \cite{Gab01}.
\item ${\sf S4.3}:={\sf S4} + \Box(\Box p\to q)\vee\Box(\Box q\to p)$ is the modal logic of all hereditarily extremally disconnected spaces \cite{BBBM15}.
\end{itemize}
Here we recall that a space $X$ is extremally disconnected if the closure of each open set is open, and $X$ is hereditarily extremally disconnected if each subspace of $X$ is extremally disconnected.

These topological completeness results can be strengthened as follows.
By the celebrated McKinsey-Tarski theorem \cite{MT44}, $\sf S4$ is the modal logic of any crowded metrizable space.\footnote{We recall that a space $X$ is {\em crowded} or {\em dense-in-itself} if it has no isolated points. It is worth pointing out that the original McKinsey-Tarski theorem also had the separability (equivalently the second countability) assumption on $X$, which was later removed by Rasiowa and Sikorski \cite{RS63} by an elaborate use of the Axiom of Choice. For a modern proof of this result see \cite{BBBM17a}.} In fact, $\sf S4$ is strongly sound and complete with respect to any crowded metrizable space \cite{Kre13,GH17}. In particular, $\sf S4$ is the modal logic of the real unit interval $[0,1]$.
On the other hand, $\sf S4.2$ is the modal logic of the Gleason cover of $[0,1]$ (see \cite{BH12}), while $\sf S4.3$ is the modal logic of a countable (hereditarily) extremally disconnected subspace of the Gleason cover of $[0,1]$ (see \cite{BBBM15}).
In contrast, ${\sf S5}:={\sf S4} + \D p \to \B\D p $, which is one of the best known modal logics, is not complete with respect to any class of spaces that satisfy even weak separation axioms. Indeed, $\Diamond\Box p\to\Box p$ is valid in a topological space iff each open set is also closed. Thus, a $T_0$-space validates $\sf S5$ iff it is discrete.

One can also characterize the modal logics of closure algebras that are not based on a powerset.
Fern\'andez-Duque \cite{Fer10} and Lando \cite{Lan12} have shown that $\sf S4$ is the logic of the Lebesgue measure algebra, and in this article we will characterize the logic of Baire algebras.
An element $c\in \baire X$ is closed (resp.~open) if $c=[C]$ for some closed (resp.~open) $C$.
A distinguished feature of $\baire X$ is that $c\in \baire X$ is open iff it is closed. This yields that $\sf S5$ is sound with respect to $\baire X$ for any topological space $X$. For completeness,
we refine Hewitt's \cite{Hew43} well-known concept of {\em resolvability} to that of {\em Baire resolvability.}
A space is resolvable if it can be partitioned into two dense sets.
Similarly, a closure algebra $B$ is resolvable if there are $a,b$ that are {\em orthogonal} ($a\wedge b = 0$) and {\em dense} ($\C a = \C b = 1$).
If we denote $|\mathbb R|$ by $\mathfrak c$, we show that if $X$ is crowded and either a complete metric space of cardinality $\mathfrak c$ or a locally compact Hausdorff space, then $\baire X$ is resolvable.
In fact, such algebras are $\mathfrak c$-resolvable, meaning that we can find $\mathfrak c$-many dense and pairwise orthogonal elements of $\baire X$.
Our main tool in proving these results is the Disjoint Refinement Lemma (see, e.g., \cite[Lem.~7.5]{CN74}).

Using these resolvability results we show that if $X$ is a crowded, continuum-sized, complete metrizable space, then $\sf S5$ is strongly complete for $\baire X$, yielding a variant of the McKinsey-Tarski theorem for $\sf S5$.
In view of $\mathfrak c$-resolvability, strong completeness holds even if we extend the propositional language with continuum-many propositional variables.
This yields a sharper version of strong completeness than that given in the literature, where only countable languages are typically considered.
Some of our other main results include that every extension of $\sf S5$ is the modal logic of some subalgebra of $\baire X$ for any crowded second-countable completely metrizable space $X$. These results also hold if we replace completely metrizable by locally compact Hausdorff.
Finally, we show how to extend our results to the setting of the universal modality.

\section{Preliminaries}\label{secPrel}

We assume some basic familiarity with Boolean algebras, topological spaces, and ordinal and cardinal arithmetic 
(see, e.g.,~\cite{Kop89,Eng89,CN74}).
In this section we will review closure algebras and their relation to modal logic.

\subsection{Closure algebras}

For each topological space $X$, the powerset $\wp(X)$ is a Boolean algebra
and the closure operator $\C:\wp(X)\to\wp(X)$ satisfies the Kuratowski axioms: \[
A\subseteq\C A, \ \C\C A\subseteq \C A, \ \C(A \cup B)=\C A\cup\C B, \mbox{ and } \C \varnothing = \varnothing.
\]
We call the pair $(\wp(X),\C)$ the \emph{Kuratowski algebra} of $X$ and denote it by $\Kur(X)$.

McKinsey and Tarski \cite{MT44} generalized the concept of a closure on $\wp(X)$ to that of a closure on an arbitrary Boolean algebra.

\begin{definition}\label{closAlg}
A {\em closure algebra} is a pair $(B,\C)$, where $B$ is a Boolean algebra and $\C \colon B \to B$ satisfies, for every $a,b\in B$,
\begin{enumerate}
\item $
a\le\C a$,

\item $\C\C a\le\C a$,

\item $\C(a\vee b)=\C a\vee\C b, $ and

\item $\C 0=0$.

\end{enumerate}
Let $\I a:= {-}\C {-}a$.
If $\C$ moreover satisfies
\[\C a = \I \C a,\]
then $(B,\C)$ is a {\em monadic algebra.} 
\end{definition}

\begin{remark}
The notion of a monadic algebra is due to Halmos \cite{Hal56}.
\end{remark}

As is the case for Boolean algebras, closure algebras can be represented as subalgebras of algebras based on a powerset.

\begin{theorem} \cite[Thm.~2.4]{MT44}
Each closure algebra is isomorphic to a subalgebra of the Kuratowski algebra $\Kur(X)$ of some topological space $X$.
\end{theorem}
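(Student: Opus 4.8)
The plan is to combine Stone's representation of Boolean algebras with a J\'onsson--Tarski style analysis of the operator $\C$, realizing $(B,\C)$ inside the Kuratowski algebra of the space of ultrafilters of $B$ equipped with a suitable Alexandrov topology. Writing $X$ for the set of ultrafilters of $B$ and $\widehat a = \{U \in X : a \in U\}$ for the Stone map, Stone's theorem already gives that $a \mapsto \widehat a$ is a Boolean embedding of $B$ into $\wp(X)$. What remains is to put a topology on $X$ whose induced closure operator $\C_\tau$ satisfies $\C_\tau \widehat a = \widehat{\C a}$ for every $a \in B$; this makes the Stone map a morphism of closure algebras, and hence realizes $(B,\C)$ as a subalgebra of $\Kur(X)$.

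First I would define a binary relation $R$ on $X$ by setting $U \mathbin R V$ iff $a \in V$ implies $\C a \in U$, for all $a \in B$. The closure axioms translate directly into order-theoretic properties of $R$: axiom (1), $a \le \C a$, together with the upward-closedness of ultrafilters yields that $R$ is reflexive, while axiom (2), $\C\C a \le \C a$, yields transitivity (if $U \mathbin R V \mathbin R W$ and $a \in W$, then $\C a \in V$, so $\C\C a \in U$, whence $\C a \in U$). Thus $(X,R)$ is a preorder. I would then equip $X$ with the Alexandrov topology whose closed sets are the $R$-backward-closed sets, those $C$ with $U \mathbin R V$ and $V \in C$ implying $U \in C$. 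A routine check using reflexivity and transitivity shows that the resulting topological closure operator is the relational diamond $\C_\tau Y = \{U : \exists V \in Y,\ U \mathbin R V\}$; since it arises from a genuine topology, it automatically satisfies the Kuratowski axioms.

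The crux is the identity $\C_\tau \widehat a = \widehat{\C a}$. The inclusion $\C_\tau \widehat a \subseteq \widehat{\C a}$ is immediate from the definition of $R$: if $U \mathbin R V$ with $a \in V$, then $\C a \in U$. The reverse inclusion is the main obstacle, and is precisely the existence (truth) lemma familiar from modal correspondence theory: assuming $\C a \in U$, one must produce an ultrafilter $V$ with $a \in V$ and $U \mathbin R V$. I would obtain $V$ by extending the set $\{a\} \cup \{b : \I b \in U\}$ to an ultrafilter, where $\I = {-}\C{-}$ is the interior. That $\{b : \I b \in U\}$ is a filter uses that $\I$ preserves finite meets and fixes $1$ (the duals of axioms (3) and (4)); that it is consistent with $a$ uses $\C a \in U$; and any ultrafilter containing it automatically satisfies $U \mathbin R V$. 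This step is where the genuine content lies, as it requires a Zorn's-lemma extension and careful use of the interior axioms. With the identity in hand, the injective Stone map commutes with closure, giving the desired embedding of $(B,\C)$ into $\Kur(X)$ and completing the proof.
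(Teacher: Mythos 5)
Your proof is correct, but there is nothing in the paper to compare it against: the statement is quoted with a citation to McKinsey and Tarski \cite[Thm.~2.4]{MT44} and never proved in the text, so the relevant comparison is with the classical argument. What you give is the now-standard canonical-frame (J\'onsson--Tarski style) representation: the Stone space of $B$ with the relation $U \mathbin{R} V$ iff $a\in V$ implies $\C a\in U$, topologized as an Alexandroff space, with the existence lemma carrying the real weight. Your three key verifications are all sound: $\{b : \I b\in U\}$ is a filter by $\I(b\wedge c)=\I b\wedge \I c$ and $\I 1 = 1$; it is compatible with $a$ because $a\wedge b=0$ and $\I b\in U$ would force $\C a\le -\I b$, contradicting $\C a\in U$; and any ultrafilter $V$ extending $\{a\}\cup\{b:\I b\in U\}$ satisfies $U\mathbin{R}V$, since $\C c\notin U$ gives $\I(-c)\in U$, hence $-c\in V$. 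The classical proof (McKinsey--Tarski, and the treatment in Rasiowa--Sikorski) also embeds $B$ into the powerset of its Stone space, but topologizes that space differently: one takes $\{\widehat{\I a} : a\in B\}$ as an open base (this is a base since $\I$ preserves finite meets) and proves $\C_\tau\widehat a = \widehat{\C a}$ by a direct computation with basic open neighborhoods, with no accessibility relation and no existence lemma. The two routes trade different conveniences: yours yields an Alexandroff space, so the representing space is literally an $\sf S4$-frame, which meshes with the frame-theoretic machinery used throughout this paper; the classical one yields a coarser, spectral-type topology and stays entirely within lattice-theoretic computation, which is natural given that relational semantics did not yet exist in 1944. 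Either way the theorem follows, so your proposal stands as a complete and correct alternative proof.
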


Recall that Boolean algebras are partial orders $(B,{\leq})$ with a least element and a greatest element, usually denoted by $0$ and $1$,
and where for $a,b\in B$ their meet (greatest lower bound) $a\wedge b$
and complement $-a$
are defined, and satisfy the usual axioms (see, e.g., \cite{Hal63,Sik60}).
If $S\subseteq B$, then $\bigwedge S$ denotes its meet
and $\bigvee S$ its join
when they exist; the algebra $B$ is {\em complete} provided these always exist.
The next definition goes back to Halmos \cite{Hal56}.

\begin{definition}
We say that $A \subseteq B$ is {\em relatively complete} in $B$ if for each $b\in B$, the set $
\{
a\in A \mid b\leq a
\}$
has a least element (which then is $\bigwedge\{a\in A\mid b\le a\}$).
\end{definition}

Of particular importance are countable joins and meets, also called {\em $\sigma$-joins} and {\em $\sigma$-meets;} the algebra $B$ is \emph{$\sigma$-complete} if such joins and meets always exist, in which case $B$ is a \emph{$\sigma$-algebra}.
An {\em ideal} is a set ${\bf I}\subseteq B$ which is closed under finite joins and which for each element $a\in \bf I$ also contains all elements underneath $a$; it is \emph{proper} if $1\notin \bf I$; and it is a {\em $\sigma$-ideal} if it is closed under $\sigma$-joins.
For an ideal $\bf I$, recall that $B/\bf I$ is the set of equivalence classes of $B$ under the equivalence relation given by $a\sim b$ if $a-b,b-a\in \bf I$. We order $B/\bf I$ by $[a] \leq [b]$ if $a-b\in \bf I$. The quotient $B/\bf I$ is then a Boolean algebra; moreover, if 
$B$ is $\sigma$-complete and $\bf I$ is a $\sigma$-ideal, then $B/\bf I$ is $\sigma$-complete (see, e.g., \cite[p.~56]{Hal63}).

\subsection{Modal logic}

We will work with the basic unimodal language as well as its extension with the universal modality.
However, as we are interested in strong completeness for possibly uncountable sets of formulas, we want to allow for an arbitrary number of propositional variables.
Given a cardinal $\lambda$, let $\mathbb P^\lambda = \{p_\iota \mid \iota < \lambda\}$ be a set of $\lambda$-many propositional variables. The {\em modal language} $\lang \lambda{\D\forall}$ is defined by the grammar (in Backus-Naur form)
\[\varphi,\psi := \  \  \  \  p \  \  \ | \  \  \ \varphi\wedge\psi \ \  \  | \  \  \ \neg \varphi \  \  \ | \  \  \ \D \varphi \  \  \ | \  \  \ \forall \varphi  \]
where $p\in \mathbb P^\lambda$. We also use standard shorthands, for example defining $\bot$ as $p\wedge \neg p$ (where $p$ is any fixed variable), $\varphi\vee\psi$ as $\neg(\neg\varphi\wedge\neg \psi)$, and $\B\varphi$ as $\neg\D \neg\varphi$. We denote the $\forall$-free fragment by $\lang\lambda\D$, and we omit the superscript $\lambda$ when $\lambda = \omega$.

We use $\D$ as primitive rather than $\B$ since, historically, algebraic semantics was presented in terms of closure-like operators on Boolean algebras.
For our purposes, a {\em logic} is understood as a set of formulas closed under modus ponens, substitution, and necessitation.
If $\Lambda$ is any logic over $\mathcal L$, then $\Lambda^\lambda$ denotes the least logic over $\mathcal L^\lambda$ containing $\Lambda \cap \mathcal L^\lambda$.

We restrict our attention to logics above $\sf S4$, and especially to $\sf S5$ and its extensions. 
A standard axiomatization of $\sf S4$ in $\lang\lambda\D$ (i.e., of ${\sf S4}^\lambda$) is given by all (classical) propositional tautologies and the axioms and rules 
\begin{multicols}2
\begin{description}

\item[$\rm M$] $\D ( p \vee q) \to \D p \vee \D q $

\vfill

\item[$\rm T$] $ p \to \D p $

\vfill

\item[$\rm 4$] $\D \D p \to \D p $

\vfill

\item[$\rm N$] $\neg \D \bot$
\columnbreak

\item[$\rm Sub$] $\dfrac{\varphi(p_1,\dots,p_n)}{\varphi(\psi_1,\dots,\psi_n)}$
\\\\

\item[$\rm MP$] $\dfrac{\varphi \ \ \varphi \to \psi}\psi$
\\\\

\item[$\rm Mon$] $\dfrac{ \varphi \to \psi}{\D \varphi \to \D \psi}$.

\end{description}
\end{multicols}

The logic ${\sf S5}^\lambda$ is then obtained by adding the axiom 
\begin{description}
\item[$\rm 5$] $\D p \to \B\D p$.
\end{description}

Given a logic $\Lambda$ over $\lang\lambda\D$, we obtain a new logic $\Lambda{\sf U}$ by adding the $\sf S5$ axioms and rules for $ \forall$ (or rather for $\exists:=\neg\forall\neg$) and the connecting axiom $\D\varphi \to \exists \varphi$. We will specifically be interested in $\sf S5U$.

The language $\lang\lambda\D$ has familiar Kripke semantics based on frames $\mathfrak F = (W,R)$, where $R \subseteq W\times W$ (see, e.g., \cite{CZ97,BRV01}).
We will not review this semantics in detail, and instead regard it as a special case of topological or, more generally, algebraic semantics.

Closure algebras provide the algebraic semantics for $\sf S4$ and its {\em normal} extensions, meaning those logics containing the axioms of $\sf S4$ and closed under the rules of $\sf S4$.

\begin{definition}\label{DefSem}
An {\em algebraic model} of $\sf S4U$ is a structure $\fr M=(B,\val\cdot),$ where $B$ is a closure algebra and $\lb\cdot\rb\colon\lang \lambda{\D\forall} \to B$ is a {\em valuation for $\lang \lambda{\D\forall}$;} that is, a function such that $\lb p\rb \in B$ for each $p \in \mathbb P^\lambda$ and
\[
\begin{array}{rclcrcl}
\lb\varphi\wedge\psi\rb &=&\lb\varphi\rb\wedge \lb\psi\rb& \ \ \ \ \ \ \ \ &
\lb\neg \varphi \rb &=& - \lb\varphi\rb\\
\val{\D\varphi}&=&\C \val\varphi&&
\val{\forall\varphi} &=&
\begin{cases}
1 & \text{if $\val \varphi = 1$}\\
0 & \text{otherwise.}
\end{cases}
\end{array}
\]
We may also say that $\fr M$ is an \emph{algebraic model of $\sf S4$,} and  
that the restriction of $\val \cdot $ to $\lang \lambda{\D }$ is a {\em valuation for $\lang \lambda{\D } $.}
\end{definition}

Validity, soundness, and completeness are then defined in the usual way:

\begin{definition}
Let $\fr M = (B,\val\cdot)$ be an algebraic model and $\Gamma\cup\{\varphi\}\subseteq\lang\lambda{\D\forall}$.
\begin{enumerate}
\item We say that $\varphi$ is {\em valid} in $\fr M$, written $\fr M\models\varphi$, if $\val\varphi  = 1$. If $\fr M\models\gamma$ for each $\gamma\in\Gamma$, then we say that $\Gamma$ is {\em valid} in $\fr M$ and write $\fr M\models\Gamma$.
\item We write $B \models \varphi$ if $(B,\val\cdot) \models \varphi$ for every valuation $\val\cdot$ on $B$, and define $B \models \Gamma$ similarly.
\item If $\Omega$ is a class of closure algebras, we say that $\varphi$ is {\em valid in $\Omega$} provided $B \models \varphi$ for each $B \in \Omega$. That $\Gamma$ is {\em valid in $\Omega$} is defined similarly.
\item For $S \subseteq \{\D,\forall\}$, we denote the set of valid $\lang \lambda{S}$-formulas in
$\Omega$ by ${\sf Log}_S(\Omega)$. If $\Omega = \{ B \}$, we simply write ${\sf Log}_S( B )$.
\item A logic $\Lambda$ is {\em sound} for $\Omega$ if $ \Lambda \subseteq {\sf Log}_S(\Omega)$, and {\em complete} if $ \Lambda \supseteq {\sf Log}_S(\Omega)$.
\end{enumerate}
\end{definition}

We will also be interested in {\em strong} completeness of logics.

\begin{definition}
Let $\varphi$ be a formula, $\Gamma$ a set of formulas, and $\Lambda$ a logic in $\lang\lambda{\D\forall}$.
\begin{enumerate}

\item We write $\Gamma\vdash_\Lambda\varphi$ if there is a finite $\Delta\subseteq\Gamma$ such that $(\forall\bigwedge\Delta) \to \varphi \in \Lambda$.
We omit $\Gamma$ when $\Gamma = \varnothing$.
If $\Lambda$ is a logic in $\lang\lambda{\D }$, $\forall\bigwedge\Delta$ should be replaced by $\Box \bigwedge\Delta$. 

\item For a closure algebra $B$, we write $\Gamma\models_B\varphi$ if $B\models \Gamma $ implies 
$B\models\varphi$.\footnote{This is the `global consequence relation', which should not be confused with
the `local consequence relation', defined pointwise on Kripke frames.
The former is more natural in algebraic semantics.}

\item We say that $\Lambda$ is {\em strongly complete} for a closure algebra $B$ if $\Gamma\models_B\varphi$ implies $\Gamma\vdash_\Lambda\varphi$,
and that $\Lambda$ is {\em strongly complete} for a class of structures $\Omega$ if $\Gamma\models_\Omega\varphi$ implies $\Gamma\vdash_\Lambda\varphi$.
\end{enumerate}
\end{definition}

We remark that strong completeness, as we defined it, does not imply soundness.
As we had mentioned previously, Kripke frames can be seen as a special case of Kuratowski algebras.
If $W$ is a set and $R\subseteq W\times W$, then $R$ induces an operator on $\wp(W)$ given by
\[
A\mapsto R^{-1}(A):=\{w\in W \mid wRa \mbox{ for some } a\in A\}.
\]
If $W$ is nonempty and $R$ reflexive and transitive, we say that $(W,R)$ is an {\em $\sf S4$-frame}. It is well known and not hard to check that in this case $(\wp(W),R^{-1} )$ is a Kuratowski algebra, and that it is the Kuratowski algebra of the topology on $W$ whose open sets are those $U\subseteq W$ that satisfy $R(U) \subseteq U$, recalling that
\[
R(U)=\{w\in W\mid uRw \mbox{ for some } u\in U\}.
\]
If in addition $R$ is symmetric, then we say that $(W,R)$ is an {\em $\sf S5$-frame.}
We will tacitly identify an $\sf S4$-frame with the associated topological space and even with the associated Kuratowski algebra.

It is well known that the topologies arising from $\sf S4$-frames
are {\em Alexandroff spaces}; that is, topologies where arbitrary intersections of open sets are open.
It is a consequence of the results of McKinsey and Tarski \cite{MT44} and Kripke \cite{Kri63} that $\sf S4$ is sound for the class of closure algebras, and complete for the class of $\sf S4$-frames.
It is well known that $\sf S4$ and $\sf S5$ are strongly complete, with respect to 
both global and local consequence relations;
both consequence relations are discussed in \cite{BRV01}; on the other hand, \cite{RS63} only focuses on the global consequence relation (using different terminology), which is the route we take here.
The literature typically considers countable languages, but the adaptation of these results to uncountable languages is straightforward; we provide a sketch below.

\begin{theorem}\label{theoS4comp}\label{theoS5comp}
For an infinite cardinal $\lambda$, we have:
\begin{enumerate}
\item[{\em (1)}] ${\sf S4}^\lambda$ and ${\sf S4U}^\lambda$ are strongly complete for the class of all $\sf S4$-frames of cardinality at most $\lambda$.
\item[{\em (2)}] ${\sf S5}^\lambda$ and ${\sf S5U}^\lambda$ are strongly complete for the class of all $\sf S5$-frames of cardinality at most $\lambda$.\color{black} 
\end{enumerate}
\end{theorem}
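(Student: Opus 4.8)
The plan is to reuse the canonical-model argument behind the cited countable cases, adding only what is needed to (i)~handle an uncountable language, (ii)~secure the cardinality bound $\le\lambda$, and (iii)~accommodate the universal modality. I will treat the four systems uniformly, carrying out the construction in detail for $\sf S5$ and then indicating the adjustments for $\sf S4$ and for the $\sf U$-variants.

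The first ingredient is Lindenbaum's Lemma for $\lang\lambda\D$. Since $|\lang\lambda\D|=\lambda$ whenever $\lambda$ is infinite, every $\Lambda$-consistent set of formulas extends to a maximal $\Lambda$-consistent one, by a routine application of Zorn's Lemma (or by transfinite recursion along a well-ordering of the formulas); this is insensitive to the size of $\lambda$. Next I exploit that $\B$ is a master modality for both $\sf S4$ and $\sf S5$, i.e.\ $\vdash\B p\to p$ and $\vdash\B p\to\B\B p$; the global deduction theorem then gives that $\Gamma\vdash_\Lambda\varphi$ fails exactly when the set $\{\B\gamma:\gamma\in\Gamma\}\cup\{\neg\varphi\}$ is $\Lambda$-consistent (using that derivations are finite, so only finitely many $\B\gamma$ ever occur). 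So, assuming $\Gamma\not\vdash_\Lambda\varphi$, I extend this set to a maximal consistent $w_0$, giving $\neg\varphi\in w_0$ while $\B\gamma\in w_0$ for every $\gamma\in\Gamma$.

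The step that produces the bound $\le\lambda$ is a frugal choice of witnesses. For $\sf S5$ one does not take the entire canonical cluster of $w_0$ (which may have up to $2^\lambda$ points); instead, for each of the at most $\lambda$-many formulas $\D\psi\in w_0$ one selects, via the Existence Lemma, a single maximal consistent $w_\psi$ with $\psi\in w_\psi$ and with the same $\B$-formulas as $w_0$. Letting $W=\{w_0\}\cup\{w_\psi\}$ and equipping it with the universal relation yields an $\sf S5$-frame with $|W|\le\lambda$. All points of $W$ carry the same $\B$- and $\D$-formulas as $w_0$, so every $\D$-formula true in $W$ is already witnessed and the Truth Lemma holds on $W$ under the canonical valuation $\val\cdot$; moreover $\B\gamma\in w_0$ together with $\vdash\B\gamma\to\gamma$ force $\val\gamma=1$ at every point, so $\Gamma$ is globally satisfied while $\val\varphi\neq 1$ at $w_0$. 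Taking $\Omega$ to be the class of $\sf S5$-frames of size $\le\lambda$, this shows $\Gamma\not\models_\Omega\varphi$, which is the contrapositive of strong completeness.

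For $\sf S4$ the canonical relation is only a reflexive transitive preorder, so in place of one cluster I build a rooted generated submodel by iterating the witnessing $\omega$ times: at each stage, for every world $w$ already present and every $\D\psi\in w$, I adjoin one $R$-successor realising $\psi$. A countable union of sets each of size $\le\lambda$ again has size $\le\lambda$, so the resulting submodel stays within the bound, and being closed under the needed successors it is a generated submodel of the canonical model, whence the Truth Lemma transfers; transitivity propagates $\B\gamma\in w_0$ to every reachable world, so $\Gamma$ is globally satisfied and $\varphi$ fails at the root. For $\sf S4U$ and $\sf S5U$ one runs the same recursion but additionally adjoins a witness for each $\exists\psi$; because $\forall$ is interpreted as the universal relation on the whole carrier, one keeps only the component generated under $R$ together with these universal steps, which is $\forall$-connected, so that the algebraic reading of $\forall$ as ``holds everywhere'' is faithfully modelled on the resulting frame of size $\le\lambda$. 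The one point demanding care---and the only place where uncountability of the language is actually used---is the bookkeeping that simultaneously keeps this generated submodel of size $\le\lambda$ and retains enough witnesses for the Truth Lemma; I expect the $\sf U$-variants to be the most delicate, since there one must check that passing to the $\forall$-generated component disturbs neither the global satisfaction of $\Gamma$ nor the refutation of $\varphi$.
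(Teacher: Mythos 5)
Your proof is correct, but it secures the cardinality bound by a genuinely different route from the paper. The paper keeps the standard relativized canonical model (maximal consistent sets containing the premises, Lindenbaum via Zorn's lemma, exactly as you do) and then cuts it down to size at most $\lambda$ in one stroke: it views $\lang\lambda{\D\forall}$ as a fragment of first-order logic and applies the downward L\"owenheim--Skolem theorem to the canonical model, preserving global truth of $\Gamma$ and failure of $\varphi$. You instead perform a selective witness extraction inside the canonical model: one witness per formula $\D\psi$ (and $\exists\psi$ in the $\sf U$-variants), with a single round sufficing for $\sf S5$ because canonically related worlds share their $\B$- and $\D$-formulas, and an $\omega$-fold iteration for $\sf S4$. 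Your route is more self-contained---it needs nothing beyond Zorn's lemma, no standard translation into first-order logic---and it handles the universal modality transparently, since all selected worlds lie in a single $R_\forall$-equivalence class on which $R_\forall$ literally is the universal relation; the paper's route is shorter on the page but delegates all bookkeeping to L\"owenheim--Skolem. One small correction to your $\sf S4$ case: the structure you build is \emph{not} a generated submodel of the canonical model (it omits most canonical successors), so you cannot justify the Truth Lemma by appeal to generated-submodel invariance. The Truth Lemma nevertheless holds by a direct induction: the direction $w\Vdash\D\psi\Rightarrow\D\psi\in w$ uses only that your relation is the restriction of the canonical relation, and the converse direction uses the witnesses you adjoined at the next stage. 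So this is a misnomer rather than a gap, but the justification should be stated that way.
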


\begin{proof}[Proof sketch]
We outline how a standard proof, as given e.g.~in 
\cite[p.~203]{BRV01}, can be adapted to obtain the results as stated.
Let $\Lambda $ be one of the logics mentioned above, and suppose that $\Gamma\not\vdash_{\Lambda} \varphi$.
The completeness proof by the relativized canonical model $\mathfrak M^\Gamma_c = (W^\Gamma_c,R^\Gamma_c,\val\cdot^\Gamma_c)$, where $W^\Gamma_c$ is the set of maximal consistent sets containing $\Gamma$, carries over mostly verbatim for uncountable languages.
The relativized canonical model has the property that $\mathfrak  M^\Gamma_c \models \Gamma$ and $\mathfrak  M^\Gamma_c \not \models \varphi$.
The only caveat is that the proof of the Lindenbaum lemma, 
that any consistent set of formulas $\Delta$ can be extended to a maximal consistent set $\Delta' \supseteq \Delta$,
requires the use of Zorn's lemma.

The set of worlds $W^\Gamma_c$ may have cardinality greater than $\lambda$.
However, viewing $\lang\lambda{\D\forall}$ as a 
fragment of the first-order logic via the standard translation~\cite[p.~84]{BRV01}, we may apply the downward L\"owenheim-Skolem theorem to obtain a model $\mathfrak M$ of size at most $\lambda$ such that $\mathfrak  M  \models \Gamma$ and $\mathfrak  M  \not \models \varphi$, as required.
 \end{proof}

For a topological space $X$, it is well known (and not hard to see) that the following are equivalent:
\begin{enumerate}
\item $X\models \sf S5$;
\item every open set of $X$ is closed;
\item $X$ has a basis which is also a partition of $X$;
\item $X$ is an $\sf S5$-frame.
\end{enumerate}
Thus, unlike the case of $\sf S4$, no generality is achieved
by passing to topological semantics of $\sf S5$. 
As we will see, the story is quite different for algebraic semantics.

We conclude this preliminary section by recalling the extensions of $\sf S5$.
For each natural number $n>0$, let
\[
\sfiven n := \bigwedge_{i = 1}^{n+1} \D p_i \to \bigvee_{1\leq i<j\leq n+1} \D(p_i\wedge p_j).
\]
The logic ${\sf S5}_n$ is the extension of $\sf S5$ by the above axiom.
We note that an $\sf S5$-frame validates $\sfiven n$ iff each point has at most $n$ distinct successors, or `alternatives'. 

The next result is well known, and was originally proved by Scroggs \cite{Scr51}. 
We remark that in this article we use $\subset$ to denote strict inclusion.

\begin{theorem}[Scroggs' Theorem]\label{theoScrogg}
The consistent extensions of $\sf S5$ form the 
following $(\omega+1)$-chain $($with respect to $\supset):$
\[
{\sf S5}_1\supset\dots\supset{\sf S5}_n\supset\dots\supset{\sf S5}.
\]
\end{theorem}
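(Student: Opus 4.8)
The plan is to match the consistent normal extensions of $\sf S5$ with clusters. For $1\le n<\omega$ let $\fr C_n$ denote the $\sf S5$-frame whose $n$ points are related by the total relation (an $n$-element cluster), and let $\fr C_\omega$ be the countably infinite cluster. Two soundness facts are immediate: $\fr C_n\models\sf S5_n$, since a cluster of size $n$ validates $\sfiven n$, and $\fr C_\omega\models\sf S5$. Moreover, whenever $m\le n$ any surjection $\fr C_n\to\fr C_m$ (or $\fr C_\omega\to\fr C_m$) is a p-morphism, as a surjection between total relations is automatically bounded, so validity transfers downward: ${\sf Log}(\fr C_n)\subseteq{\sf Log}(\fr C_m)$. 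From these facts alone the chain and its strictness follow, with no completeness input. Indeed $\sfiven n\in\sf S5_n$ by definition, while $\sfiven n\notin\sf S5_{n+1}$ because $\fr C_{n+1}\models\sf S5_{n+1}$ yet $\fr C_{n+1}\not\models\sfiven n$; hence $\sf S5_{n+1}\subset\sf S5_n$. Likewise $\fr C_\omega\models\sf S5$ and $\fr C_\omega\not\models\sfiven n$ give $\sf S5\subset\sf S5_n$.

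It remains to show that no other consistent extensions exist. Given a consistent normal $\Lambda\supseteq\sf S5$, put $N(\Lambda)=\{n\ge 1:\fr C_n\models\Lambda\}$; the downward transfer of validity above makes $N(\Lambda)$ an initial segment of the positive integers. The crucial claim — and the main obstacle — is that $\Lambda$ is complete with respect to $\{\fr C_n:n\in N(\Lambda)\}$, i.e.\ every $\Lambda$-nontheorem is refuted on some finite cluster validating $\Lambda$. Granting this, the proof finishes quickly. If $N(\Lambda)$ is infinite it equals all of $\{1,2,\dots\}$, and then by the claim $\Lambda={\sf Log}(\{\fr C_n:n<\omega\})=\sf S5$, the last equality being the claim applied to $\sf S5$ itself (every finite cluster is an $\sf S5$-frame). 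If instead $N(\Lambda)=\{1,\dots,n\}$, then $\Lambda={\sf Log}(\fr C_n)$ (using the p-morphisms to absorb the smaller clusters); applying the claim also to $\sf S5_n$, whose validating clusters are exactly those of size $\le n$, gives $\sf S5_n={\sf Log}(\fr C_n)$, so $\Lambda=\sf S5_n$. This exhausts the list.

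To establish the completeness claim I would argue algebraically, exploiting that the algebraic models of $\sf S5$ are precisely the monadic algebras of Definition~\ref{closAlg}. If $\not\vdash_\Lambda\varphi$, the Lindenbaum–Tarski algebra of $\Lambda$ is a monadic algebra validating $\Lambda$ on which some valuation yields $\val\varphi\ne 1$. By Halmos' theorem that every monadic algebra is semisimple, this algebra is a subdirect product of simple monadic algebras, each again validating $\Lambda$, and the refutation of $\varphi$ is inherited by one simple factor $S$. A monadic algebra is simple exactly when $\C a=1$ for all $a\ne 0$; consequently the finite Boolean subalgebra of $S$ generated by the finitely many values $\val\psi$ of the subformulas $\psi$ of $\varphi$ is already closed under $\C$, hence is a finite simple monadic algebra, necessarily isomorphic to the complex algebra of some $\fr C_k$. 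As a subalgebra of $S$ it validates $\Lambda$, so $k\in N(\Lambda)$, and it refutes $\varphi$; this is the required finite cluster. The delicate points are thus exactly the two algebraic inputs — semisimplicity of monadic algebras and the identification of the finite simple ones with clusters — after which the combinatorics above closes the argument.
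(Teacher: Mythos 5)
Your proposal addresses a statement the paper does not prove at all: the authors label it Scroggs' Theorem, call it ``well known,'' and rely on the literature, so there is no internal proof to compare against. Judged on its own merits, your argument is correct and is essentially the standard algebraic proof of this result. The soundness facts ($\fr C_k\models\sfiven n$ iff $k\le n$, by pigeonhole), the observation that any surjection between clusters is a p-morphism, and the reduction of the classification to the completeness claim for an arbitrary consistent normal $\Lambda\supseteq{\sf S5}$ are all right. The algebraic core is also sound: the Lindenbaum--Tarski algebra of $\Lambda$ is a monadic algebra refuting any non-theorem; Halmos' semisimplicity theorem yields a simple factor that still validates $\Lambda$ (homomorphic image) and still refutes $\varphi$ (some projection of a non-unit is a non-unit); and since in a simple monadic algebra $\C$ takes only the values $0$ and $1$, the finite Boolean subalgebra generated by the subformula values is automatically a monadic subalgebra, isomorphic to $\Kur(\fr C_k)$ for some finite $k$. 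That produces the finite cluster you need, and your case analysis on $N(\Lambda)$ then closes the argument correctly.

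One small inaccuracy is worth fixing. You claim the chain inclusions ${\sf S5}_{n+1}\subseteq{\sf S5}_n$ follow from the soundness and p-morphism facts ``with no completeness input.'' Those facts give only strictness (${\sf S5}_n\not\subseteq{\sf S5}_{n+1}$, via $\sfiven n$ failing on $\fr C_{n+1}$) together with inclusions ${\sf Log}(\fr C_{n+1})\subseteq{\sf Log}(\fr C_n)$ between \emph{frame logics}, which at that stage of your argument are not yet known to coincide with the axiomatic systems ${\sf S5}_n$. The inclusion ${\sf S5}_{n+1}\subseteq{\sf S5}_n$ needs either your completeness claim (proved later, so no circularity, but then the quoted phrase is wrong) or the one-line syntactic remark that $\sfiven{n+1}$ is a propositional weakening of an instance of $\sfiven n$: from $\bigwedge_{i=1}^{n+2}\D p_i$ one gets $\bigwedge_{i=1}^{n+1}\D p_i$, hence by $\sfiven n$ some disjunct $\D(p_i\wedge p_j)$ with $i<j\le n+1$, which entails the larger disjunction. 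With that line added, your proof is complete modulo the two algebraic inputs you correctly flag and attribute (both due to Halmos, whose work the authors already cite).
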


Note that Scroggs' Theorem applies to $\mathcal L^\lambda_\D$ for any infinite $\lambda$ since any logic $\Lambda$ over $\mathcal L^\lambda_\D$ is uniquely determined by $ \Lambda \cap \mathcal L^\omega_\D$ (as logics are 
closed under substitution).
However, it is possible to exhibit infinite sets of formulas that may only be satisfied  on uncountable models. For example, to satisfy 
\[
\{\D p_\alpha \mid \alpha<\lambda\} \cup \{\B \neg(p_\alpha\wedge p_\beta ) \mid \alpha<\beta<\lambda \}
\]
requires one point for each $p_\alpha$.

\begin{definition}\label{defClusterSize}
Let $(W,R)$ be an $\sf S5$-frame.
Then it is a disjoint union of equivalence classes, or {\em clusters}, $\bigcup_{\iota < \kappa} C_\iota$, where each $C_\iota$ is of the form $R(\{w\})$ (which we henceforth write as $R(w)$) for some $w\in W$. We define:
\begin{itemize}

\item the {\em number of clusters} to be $\kappa$;

\item the {\em lower cluster size} to be $\min_{\iota<\kappa}|C_\iota|$, and

\item the {\em upper cluster size} to be $\sup_{\iota<\kappa} |C_\iota| $.

\end{itemize}
The unique (up to isomorphism) frame with one cluster of size $\lambda$ is the {\em $\lambda$-cluster,} and we denote it by $\mathfrak C_\lambda$.
\end{definition}

Theorem \ref{theoS4comp} can be sharpened using the structures $\mathfrak C_\lambda$.

\begin{theorem}\label{theoS5n}
For each finite $n>0$, ${\sf S5}_n$ is sound and strongly complete for the $n$-cluster, while for infinite $\lambda$, ${\sf S5}^\lambda$ is sound and strongly complete for any $\kappa$-cluster with $\kappa\ge\lambda$.
\end{theorem}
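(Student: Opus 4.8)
The plan is to prove soundness by inspection and to obtain strong completeness in both cases by producing a refuting model on an arbitrary $\sf S5$-frame, cutting it down to a single cluster, and then transferring the refutation onto the prescribed target cluster by a surjective bounded morphism. Soundness is immediate: for every $\lambda$ the frame $\mathfrak C_\lambda$ is an $\sf S5$-frame, hence it validates $\sf S5$ and a fortiori ${\sf S5}^\lambda$, which settles the infinite case; and for finite $n$ each point of $\mathfrak C_n$ sees exactly the $n$ points of its single cluster, so it has at most $n$ alternatives and, by the frame characterization of $\sfiven n$ noted above, $\mathfrak C_n$ validates $\sfiven n$, whence $\mathfrak C_n \models {\sf S5}_n$ since ${\sf S5}_n = {\sf S5}+\sfiven n$.

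For strong completeness, assume $\Gamma \not\vdash_\Lambda \varphi$, where $\Lambda$ is ${\sf S5}^\lambda$ in the infinite case and ${\sf S5}_n$ in the finite case. In the infinite case I would invoke Theorem~\ref{theoS5comp}(2) verbatim to obtain an $\sf S5$-frame $F$ of cardinality at most $\lambda$ together with a valuation under which $F \models \gamma$ for every $\gamma \in \Gamma$ while $\varphi$ is false at some world $w$. In the finite case Theorem~\ref{theoS5comp} does not apply to ${\sf S5}_n$ directly, so I would instead run the relativized canonical model construction exactly as sketched for Theorem~\ref{theoS5comp}, but for the logic ${\sf S5}_n$; since this model validates $\sfiven n$, the frame characterization forces each of its clusters to have at most $n$ elements, and we again obtain a refuting $\sf S5$-model and a world $w$ at which $\varphi$ fails. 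In both cases $F$ is a disjoint union of clusters.

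Let $C$ be the cluster containing $w$. As $C$ is an $R$-closed subset of $F$ it is a generated submodel, so the truth value of every formula at every world of $C$ is unchanged upon passing to $C$; hence $\varphi$ still fails at $w \in C$ and each $\gamma \in \Gamma$ remains valid throughout $C$, which is isomorphic to $\mathfrak C_m$ with $m = |C|$. We have $1 \le m \le n$ in the finite case and $1 \le m \le \lambda \le \kappa$ in the infinite case, so there is a surjection from the target cluster ($\mathfrak C_n$, resp.\ $\mathfrak C_\kappa$) onto $\mathfrak C_m$. Because the accessibility relation of a cluster is universal, any such surjection is automatically a bounded morphism; pulling the valuation on $\mathfrak C_m$ back along it yields a valuation on the target cluster under which every $\gamma \in \Gamma$ is valid and $\varphi$ is refuted, which is exactly the refuting model on $\mathfrak C_n$ (resp.\ $\mathfrak C_\kappa$) we need.

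I expect the only delicate point to be matching cluster sizes exactly. Completeness hands us a refuting cluster $\mathfrak C_m$ whose size $m$ may be strictly smaller than that of the target, and it is precisely the surjective bounded morphism from the larger cluster onto $\mathfrak C_m$ — available because $m$ never exceeds the target size — that lets us inflate the refutation up to the prescribed cluster. In the finite case this is complemented by the observation that the canonical model of ${\sf S5}_n$ has clusters bounded by $n$, which is what guarantees $m \le n$ in the first place.
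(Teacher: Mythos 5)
Your proof is correct, but it resolves the key ``resizing'' step differently from the paper. Both arguments begin the same way: get a refuting model from Kripke completeness (Theorem~\ref{theoS5comp}(2) in the infinite case, the relativized canonical model for ${\sf S5}_n$ in the finite case), pass to the generated submodel of the refuting world, which is a single cluster $\mathfrak C_m$ on which $\Gamma$ stays globally true while $\varphi$ fails. The paper then fixes the cluster size by viewing the model as a first-order structure and applying the upward or downward L\"owenheim--Skolem theorem to replace $\mathfrak C_m$ by $\mathfrak C_\kappa$ (universality of the relation and global truth of $\Gamma$ are first-order, so they survive). You instead exploit that any surjection between clusters is automatically a bounded morphism, surject the target cluster $\mathfrak C_\kappa$ (resp.\ $\mathfrak C_n$) onto $\mathfrak C_m$, and pull the valuation back. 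Your route is more elementary and, importantly, uniform: L\"owenheim--Skolem cannot resize finite structures, so it does not by itself handle the finite case, which the paper instead delegates to Scroggs's theorem plus an unelaborated ``same lines as Theorem~\ref{theoS5comp}''; your bounded-morphism step covers both cases in one stroke. One small imprecision worth flagging: you say the canonical model of ${\sf S5}_n$ ``validates $\sfiven n$, so the frame characterization forces clusters of size at most $n$.'' The characterization quoted in the paper concerns \emph{frame} validity, whereas the canonical model only gives truth of $\sfiven n$ under one valuation; what you actually need is that every world, being a maximal ${\sf S5}_n$-consistent set, contains all \emph{substitution instances} of $\sfiven n$, from which the bound on cluster size follows by the standard argument (equivalently, $\sfiven n$ is Sahlqvist, hence canonical). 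This is a routine repair, not a gap.
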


\begin{proof}[Proof sketch]
It follows from 
Scroggs' Theorem that ${\sf S5}_n$ is sound and complete for the $n$-cluster. 
Strong completeness follows from compactness of the first-order logic (applied to the standard translations of the theorems of ${\sf S5}_n$) and using the formula stating that there are at most $n$ elements.
For infinite $\lambda$, we have that ${\sf S5}^\lambda$ is sound and strongly complete for the class of $\sf S5$-frames.
So, if $\Gamma\not\vdash \varphi$, there is a model $\mathfrak M$ and a world $w$ of $\mathfrak M$ such that $\mathfrak M \models\Gamma$ but $ w \not \in \val \varphi$.
The submodel of $\mathfrak M$ generated by $w$ is the cluster $R(w)$ and satisfies the same formulas.
We may assume that $R(w)$ is infinite 
by adding duplicates of a point if needed, which does not affect modal formulas.
Then using the upwards or downwards L\"owenheim-Skolem theorem, we can assume 
that this cluster is isomorphic to $\mathfrak C_\kappa$.
\end{proof}

\section{Baire algebras as a new semantics for $\sf S5$}

In this section we introduce the main concept of the paper, that of Baire algebras, which provide a new semantics for $\sf S5$. Baire algebras are obtained from topological spaces $X$ by modding out $\wp(X)$ by the $\sigma$-ideal $\meager$ of meager subsets of $X$. We show that the closure operator $\C$ on $X$ gives rise to a closure operator on $\bquot X$, so that $(\bquot X,\C)$ is a monadic algebra.

We start by the following well-known definition.

\begin{definition}
Let $X$ be a topological space and $A \subseteq X$.
\begin{enumerate}
\item $A$ is {\em nowhere dense} if $\I\C A =  \varnothing$.
\item $A$ is {\em meager} if it is a $\sigma$-union of nowhere dense sets.
\item $\meager$ denotes the set of meager subsets of $X$.
\item $X$ is a {\em Baire space} if for each nonempty open subset $U$ of $X$, we have that $U \notin \meager$.
\end{enumerate}
\end{definition}

It is easy to see that $\meager$ is a $\sigma$-ideal of $\wp(X)$.
Therefore, the quotient $\bquot X$ is a $\sigma$-algebra (see, e.g., \cite[Sec.~13]{Hal63}). However, $\bquot X$ is not always a complete Boolean algebra (see, e.g., \cite[Sec.~25]{Hal63}).

We recall that elements of $\bquot X$ are equivalence classes of the equivalence relation given by
\[
A\eqm B \mbox{ iff } A \setminus B,B\setminus A\in\meager.
\]
We write $[A]$ for the equivalence class of $A$ under $\eqm$. Then $[A]\sqsubseteq [B]$ iff $A\setminus B\in\meager$, and the operations
on $\bquot X$ are given by
\begin{equation}\label{eqSigmaUnion}
 \bigsqcap_{n<\omega} [A_n] = [\bigcap_{n<\omega} A_n ] \mbox{ and } \bne [A] = [X\setminus A].
\end{equation}
On the other hand, as we already pointed out above, if $S\subseteq \bquot X$ is uncountable, then $\bigsqcap S$ may not always exist.

\begin{example}\label{examS4}\
\begin{enumerate}
\item Let $(W,R)$ be an $\sf S5$-frame, and identify it with the topological space whose opens are unions of equivalence classes of the equivalence relation $R$. For each $w\in W$, since $R(w)$ is the least open containing $w$, which is also closed, $\varnothing$ is the only nowhere dense subset of $W$. Therefore, ${\bf M}=\{\varnothing\}$, and so $\wp(W)/{\bf M}$ is isomorphic to $\wp(W)$. 
\item More generally, if $(W,R)$ is an $\sf S4$-frame, 
a point $w\in W$ is {\em quasi-maximal} if $wRv$ implies $vRw$. Let ${\sf qmax}W$ be the set of quasi-maximal points.
Then $w\in{\sf qmax}W$ iff $R(w) \subseteq R^{-1}(w)$. Therefore, 
$w\in{\sf qmax}W$ iff $\{w\}$ is not nowhere dense.  
Thus, if $W$ is countable, ${\bf M}=\{ A \subseteq W \mid A \subseteq W\setminus {\sf qmax}W \}$, 
and hence $\wp(W)/{\bf M}$ is isomorphic to $\wp({\sf qmax}W)$ (via  $[A]\mapsto A\cap {\sf qmax}W$).
\end{enumerate}
\end{example}

\begin{definition}\label{defClopen}
Let $X$ be a topological space.
\begin{enumerate}
\item We call $a\in\wp(X)/\meager$ \emph{open} if $a=[U]$ for some open $U\in\wp(X)$, and \emph{closed} if $a=[F]$ for some closed $F\in\wp(X)$.
An element that is both open and closed is \emph{clopen.}
\item We denote the set of closed elements of $\wp(X)/\meager$ by $\CS$.
\end{enumerate}
\end{definition}

It is easy to see that $\CS$ is a bounded sublattice of $\wp(X)/\meager$. We next show that $\CS$ is in fact a Boolean subalgebra of $\wp(X)/\meager$.
For this we point out that if $U$ is open, then $\C U\setminus U$ is nowhere dense (since it is closed and does not have any nonempty open subsets), so $[\C U] = [U]$. Similarly, if $F$ is closed, then $[F] = [\I F]$. 

\begin{theorem}\label{lemmHComplete}
For every topological space $X$, we have that $\CS$ is a Boolean subalgebra of $\bquot X$.
\end{theorem}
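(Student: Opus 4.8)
The plan is to check the Boolean subalgebra axioms for $\CS$ inside $\bquot X$. Since the text already records that $\CS$ is a bounded sublattice, we have $0 = [\varnothing]$ and $1 = [X]$ in $\CS$ and closure under the finite meet $\band$ and join $\bor$ (concretely $[F]\band[G] = [F\cap G]$ and $[F]\bor[G] = [F\cup G]$, both closed whenever $F,G$ are closed). The only remaining obligation is closure under complementation $\bne$, so the whole proof reduces to that single point.

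My strategy for complementation is to show that the closed elements and the open elements of $\bquot X$ coincide. For one direction I would invoke the fact highlighted just before the theorem: for any open $U$, the set $\C U \setminus U$ — the boundary of $U$ — is nowhere dense, whence $[\C U] = [U]$. As $\C U$ is closed, this exhibits every open element as a closed element. Dually, the companion fact $[\I F] = [F]$ for closed $F$ shows every closed element is open. Hence the open elements, the closed elements, and the clopen elements all determine the same subset $\CS$ of $\bquot X$.

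With this in hand, closure under complement is immediate. Given a closed element $a = [F]$, equation (\ref{eqSigmaUnion}) gives $\bne a = [X\setminus F]$, and $X\setminus F$ is open; since every open element lies in $\CS$, we conclude $\bne a \in \CS$. Combined with the sublattice structure, this shows $\CS$ is closed under all Boolean operations and contains $0$ and $1$, so it is a Boolean subalgebra of $\bquot X$.

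The main — indeed essentially the only — substantive ingredient is the observation that the boundary $\C U \setminus U$ of an open set is nowhere dense, yielding $[U] = [\C U]$; but this has already been granted in the excerpt, so the remaining work is routine bookkeeping with the quotient operations from (\ref{eqSigmaUnion}). I therefore expect the argument to be short, with no genuine obstacle beyond correctly assembling these observations.
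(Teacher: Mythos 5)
Your proof is correct and follows essentially the same route as the paper's: both reduce the problem to closure under complementation (given the bounded sublattice structure) and settle it by observing that for a closed $F$, the complement $[X\setminus F]$ equals $[\C(X\setminus F)]$ because the boundary of the open set $X\setminus F$ is nowhere dense. Your framing via ``open elements coincide with closed elements'' is just a mild repackaging of the same key fact, so there is nothing substantive to add.
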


\proof
Since $\CS$ is a bounded sublattice of $\bquot X$,
it is sufficient to show that $a\in \CS$ implies $-a\in\CS$. From $a\in\CS$ it follows that $a=[F]$ for some $F$ closed in $X$. Then $-a=[X\setminus F]$. Since $X\setminus F$ is open,
we have that $X\setminus F \eqm \C(X\setminus F)$. Thus, $-a=[X\setminus F]=[\C(X\setminus F)]$, and hence $-a\in\CS$.
\endproof

In fact, $\Gamma$ is a $\sigma$-subalgebra of $\wp(X)/\meager$, which follows from Equation~(\ref{eqSigmaUnion}) and the fact that intersection of closed sets is closed. But we can say more.
For this we recall that a subset of $X$ is a {\em Borel set} if it belongs to the $\sigma$-algebra generated by the open sets. We write ${\bf Borel}(X)$ for the $\sigma$-algebra of Borel sets and ${\bf Borel}(X)$ for the $\sigma$-algebra of Borel sets and ${\bf Borel}(X)/\meager$ for the $\sigma$-subalgebra of $\wp(X)/\meager$ isomorphic to ${\bf Borel}(X)/(\meager\cap {\bf Borel}(X))$.

\begin{theorem} \label{thm: Gamma=Borel}
For every topological space $X$, we have that
$\Gamma={\bf Borel}(X)/{\bf M}$.
\end{theorem}

\begin{proof}
Since each closed set is Borel, we have $\Gamma\subseteq{\bf Borel}(X)/{\bf M}$. For the reverse inclusion, every open set $U$ is equivalent to its own closure, so $[U]\in\Gamma$. But $\Gamma$ is a $\sigma$-algebra, so we conclude that ${\bf Borel}(X)/{\bf M}\subseteq\Gamma$, hence the equality. 
\end{proof}

Since ${\bf Borel}(X)/{\bf M}$ is always a complete Boolean algebra (see \cite[p.~75]{Sik60}), we obtain:

\begin{corollary}
For every topological space $X$, we have that $\Gamma$ is a complete Boolean algebra.
\end{corollary}


In addition, we show that $\CS$ is a relatively complete subalgebra of $\wp(X)/\meager$: 

\begin{theorem}\label{lem:relatively complete}
For an arbitrary topological space $X$, we have that $\CS$ is a relatively complete subalgebra of $\wp(X)/\meager$.
\end{theorem}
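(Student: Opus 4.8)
The plan is to exhibit, for each $b=[Y]\in\bquot X$, an explicit least closed element above it, and to reduce the only nontrivial point to a localized (dual) form of the Banach Category Theorem. First I would fix a representative $Y\subseteq X$ of $b$, set $\mathcal U=\{U\subseteq X \text{ open} \mid U\cap Y\in\meager\}$, and define
\[ V=\bigcup\mathcal U, \]
the largest open set meeting $Y$ in a meager set. Since $X\setminus V$ is closed, $[X\setminus V]\in\CS$, and I claim this is the least element of $S:=\{[F]\mid F\text{ closed and }[Y]\sqsubseteq[F]\}$, which is exactly what relative completeness of $\CS$ at $b$ demands. (Note that $V$ depends only on $b$: replacing $Y$ by an $\eqm$-equivalent set does not change which opens meet it meagerly.)

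The verification then splits into two parts. That $[X\setminus V]$ is the \emph{least} element of $S$ is purely set-theoretic and requires no topology: if $F$ is closed with $[Y]\sqsubseteq[F]$, then $X\setminus F$ is open and $(X\setminus F)\cap Y=Y\setminus F\in\meager$, so $X\setminus F\in\mathcal U$ and hence $X\setminus F\subseteq V$; therefore $(X\setminus V)\setminus F=(X\setminus V)\cap(X\setminus F)=\varnothing$, giving $[X\setminus V]\sqsubseteq[F]$. That $[X\setminus V]$ actually \emph{belongs} to $S$ amounts to $[Y]\sqsubseteq[X\setminus V]$, i.e.\ to $Y\cap V\in\meager$ — and this is the single substantive claim.

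The main obstacle is thus the Key Lemma that $Y\cap V$ is meager; this is the dual/localized form of the Banach Category Theorem, and I would prove it by a maximal disjoint refinement. By Zorn's Lemma pick a maximal family $\mathcal W\subseteq\mathcal U$ of pairwise disjoint nonempty open sets, and put $D=\bigcup\mathcal W\subseteq V$. Maximality forces $D$ to be dense in $V$: otherwise $V\setminus\C D$ is a nonempty open subset of $V$, which meets some member of $\mathcal U$ in a nonempty open set lying in $\mathcal U$ and disjoint from $D$, contradicting maximality. Hence $V\setminus D$ is nowhere dense. It then remains to see that $D\cap Y=\bigcup_{U\in\mathcal W}(U\cap Y)$ is meager. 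Writing $U\cap Y=\bigcup_n N^U_n$ with each $N^U_n$ nowhere dense and setting $M_n:=\bigcup_{U\in\mathcal W}N^U_n$, disjointness of $\mathcal W$ gives $M_n\cap U=N^U_n$ for each $U\in\mathcal W$, so $\C M_n\cap U\subseteq\C N^U_n$ has empty interior; thus $\I\C M_n$ is an open subset of $\C M_n\subseteq\C D$ disjoint from the dense open set $D$, hence empty. So each $M_n$ is nowhere dense and $D\cap Y\subseteq\bigcup_n M_n$ is meager. Since $Y\cap V\subseteq(D\cap Y)\cup(V\setminus D)$, it is meager, completing the Key Lemma.

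I expect the disjoint-refinement step to be the delicate point, since the union defining $V$ is genuinely uncountable and so cannot be controlled by $\sigma$-additivity of $\meager$ alone; it is precisely here that the Banach Category Theorem (Theorem~\ref{theoBanachCT}) does the work, and indeed the argument above is the mechanism underlying that theorem, so one may alternatively invoke Theorem~\ref{theoBanachCT} directly in place of the explicit construction. Everything else — the two inclusions verifying minimality and membership — is routine manipulation of meager sets in $\bquot X$.
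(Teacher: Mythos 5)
Your proof is correct. The leastness half is, as you say, purely formal, and your Key Lemma argument is sound: maximality of $\mathcal W$ forces $D=\bigcup\mathcal W$ to be dense in $V$; disjointness of $\mathcal W$ gives $M_n\cap U=N^U_n$ for each $U\in\mathcal W$, whence $\I\C M_n$ would be a nonempty open subset of $\C D$ missing the dense open set $D$, so each $M_n$ is nowhere dense; and $Y\cap V\subseteq (Y\cap D)\cup(V\setminus D)$ finishes the claim.

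The comparison with the paper is as follows. The skeleton is the same: the paper also takes the candidate least element to be $[\bigcap\Gamma_a]$, where $\Gamma_a$ is the family of closed $C$ with $[Y]\sqsubseteq[C]$ (your $X\setminus V$ is exactly $\bigcap\Gamma_a$), and likewise observes that leastness is immediate. The difference is in how the membership claim is discharged. The paper regards each complement $X\setminus C$, $C\in\Gamma_a$, as relatively open and relatively meager in the subspace $Y$, applies the Banach Category Theorem inside that subspace, and transfers back to $X$ via Lemma \ref{lemRelMeag}. You instead stay in $X$ throughout and prove from scratch the localized form of the Banach Category Theorem (the union of all open sets meeting $Y$ meagerly again meets $Y$ meagerly) by maximal disjoint refinement, which is indeed the mechanism underlying Banach's theorem. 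Your version costs re-proving known material, but it buys self-containedness and, more importantly, it avoids the subspace detour, which is the fragile point of the paper's argument: meagerness of $U\cap Y$ in $X$ does not imply relative meagerness of $U$ in the subspace $Y$ (take $Y$ the Cantor set in $\mathbb R$ and $C=\varnothing$: then $Y\setminus C$ is meager in $X$, yet $Y$ is Baire in itself), so the paper's opening assertion about $\Delta_a$ needs repair precisely where your in-$X$ argument is airtight. One caveat on your closing remark: Theorem \ref{theoBanachCT} as stated concerns open sets that are themselves meager, so your Key Lemma does not follow from it ``directly''; some bridge --- your construction, or the paper's subspace argument --- is genuinely needed. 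Since you give the construction in full, this does not affect the correctness of your proof.
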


\begin{proof}
Let $a = [A] \in \wp(X)/\meager $ and let $\mathcal C_a $ be the collection of all closed sets $C$ such that $[A]\sqsubseteq [C]$. By \cite[p.~75]{Sik60}, $\bigsqcap \{ [C] \mid C \in \mathcal C_a \} = [ \bigcap \mathcal C_a ]$. Thus, $\mathcal C_a$ has a least element, and hence $\CS$ is a relatively complete subalgebra of $\wp(X)/\meager$.
\end{proof}

\color{black}

Theorems \ref{lemmHComplete} and \ref{lem:relatively complete} yield:

\begin{theorem}\label{thm:baire is monadic}
If $X$ is a topological space, then $ \baire X := (\bquot X,\C)$ is a monadic algebra, where for $a\in\bquot X$, $\C a$ is defined as the least closed element above $a$.
\end{theorem}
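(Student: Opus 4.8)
The plan is to verify directly that $\C$, as defined in the statement, satisfies the four closure axioms of Definition~\ref{closAlg} together with the monadic identity $\C a = \I \C a$. Two earlier results do almost all of the work. By Theorem~\ref{lem:relatively complete}, $\CS$ is relatively complete in $\bquot X$, so for every $a$ the set $\{c \in \CS \mid a \le c\}$ genuinely possesses a least element; this least element is by definition $\C a$, and in particular $\C a \in \CS$, so $\C$ is well-defined and lands in $\CS$. By Theorem~\ref{lemmHComplete}, $\CS$ is moreover a \emph{Boolean} subalgebra of $\bquot X$, hence closed under $\vee$, under $\wedge$, and — crucially — under complementation. These two facts are what I will lean on throughout.

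First I would record the monotonicity of $\C$: if $a \le b$, then every closed element above $b$ is also above $a$, whence the least closed element above $a$ is below the least closed element above $b$, giving $\C a \le \C b$. The axioms $a \le \C a$ and $\C 0 = 0$ are then immediate, since $\C a$ lies above $a$ by construction and $0 = [\varnothing]$ is already closed and hence is its own least closed upper bound. Idempotence is equally direct: as $\C a \in \CS$ is itself a closed element, the least closed element above it is $\C a$ itself, so in fact $\C\C a = \C a$. For additivity $\C(a \vee b) = \C a \vee \C b$, the inequality $\ge$ follows from monotonicity applied to $a, b \le a \vee b$, while for $\le$ I would observe that $\C a \vee \C b$ is a closed element (as $\CS$ is closed under $\vee$) lying above $a \vee b$, so it dominates the least such element, namely $\C(a \vee b)$.

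Finally, the monadic identity is where the distinctive feature of Baire algebras enters. Writing $c = \C a \in \CS$, the key point is that $\CS$ is closed under complementation, so $-c \in \CS$ is again a closed element. A closed element is its own least closed upper bound, hence $\C(-c) = -c$, and therefore $\I \C a = \I c = -\C(-c) = -(-c) = c = \C a$, as required.

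I do not anticipate a serious obstacle, since the substantive content was already discharged in establishing that $\CS$ is a relatively complete Boolean subalgebra via the Banach Category Theorem (Theorem~\ref{theoBanachCT}). The one step worth emphasizing is the monadic identity: it rests essentially on the closure of $\CS$ under complementation — equivalently, on the coincidence of closed and open elements of $\bquot X$ — rather than on any metrizability or separation hypothesis, which is precisely what allows the conclusion to hold for an arbitrary topological space $X$.
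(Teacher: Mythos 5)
Your proposal is correct and follows exactly the paper's route: the paper declares the theorem an immediate consequence of Theorems~\ref{lemmHComplete} and~\ref{lem:relatively complete}, and your argument is precisely the routine verification of that claim, using relative completeness for well-definedness of $\C$ and closure of $\CS$ under the Boolean operations for the Kuratowski axioms and the monadic identity. Nothing is missing; you have simply written out the details the paper leaves implicit.
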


\begin{remark}
In general, for a closure algebra $(B,\C)$, an element $a\in B$ is {\em closed} if $a = \C a$, {\em open} if $a = \I a$, and {\em clopen} if it is both closed and open.
Since $\baire X$ is a closure algebra, it is not hard to check that these notions coincide with the ones given in Definition~\ref{defClopen}. 
\end{remark}

\section{Baire resolvability}




To obtain completeness results with respect to this new semantics we require to introduce the concept of Baire resolvability, which refines the concept of resolvability introduced by Hewitt \cite{Hew43}.
A space is {\em resolvable} if it can be partitioned into disjoint dense sets. 
This notion readily extends to the algebraic setting:

\begin{definition}
Let $B$ be a Boolean algebra. We call $a,b\in B$ \emph{orthogonal} if $a\wedge b=0$, and a family $\{a_i\mid i\in I\}$ \emph{pairwise orthogonal} if $a_i\wedge a_j=0$ for $i\ne j$.
\end{definition}

\begin{definition}
Let $(B,\C)$ be a closure algebra and $\kappa$ a nonzero cardinal.
\begin{enumerate}
\item We call $a\in B$ \emph{dense} if $\C a=1$.
\item We call $(B,\C)$ \emph{$\kappa$-resolvable} if there is a pairwise orthogonal family of dense elements whose cardinality is $\kappa$.
\end{enumerate}
\end{definition}

\begin{definition}
Let $(B,\C)$ be a closure algebra, $a\in B$, and $\kappa$ a nonzero cardinal. We call $a$ {\em $\kappa$-resolvable} if there is a pairwise orthogonal family $\mathcal R = \{b_\iota\mid \iota<\kappa\}$ such that $b_\iota\leq a$ and $a\le\C b_\iota$ for each $\iota<\kappa$.
The family $\mathcal R$ is a {\em $\kappa$-resolution of $a$.} 
\end{definition}

If $\{[A_\iota] \mid \iota<\kappa\}$ is a $\kappa$-resolution of $\baire X$, we remark that the collection of representatives $A_\iota$ need not come from a partition of $X$; for example, two of them may have nonempty (but meager) intersection.
However, sometimes it will be useful to have `nice' representatives, as made precise in the following definition.

\begin{definition}\label{defBaireRes}
Let $X$ be a Baire space and let $Y\subseteq X$.
If $A\subseteq Y$ is such that $[Y]\sqsubseteq \C[A]$, we say that $A$ is {\em Baire-dense in $Y.$} 
For a nonzero cardinal $\kappa$, we call $Y  \subseteq   X$ \emph{Baire $\kappa$-resolvable} if there is a partition $\{A_\iota \mid \iota<\kappa\}$ of $Y$ such that, for each $\iota$, $ A_\iota $ is Baire-dense in $Y$.  
We call $\{A_\iota \mid \iota<\kappa\}$ a {\em Baire $\kappa$-resolution of $Y$.}
\end{definition}

If $\{A_\iota \mid \iota<\kappa\}$ is a Baire $\kappa$-resolution of $Y$, then $\{ [A_\iota] \mid \iota<\kappa\}$ is a $\kappa$-resolution of $[Y]$.
Thus, Baire $\kappa$-resolvability of $Y$ implies $\kappa$-resolvability of $[Y]$.
As we have mentioned, the converse is not true a priori since the $A_\iota$ do not have to form a partition, but it is not hard to check that the two notions are equivalent if $\kappa$ is countable: just define recursively $A'_0=A_0$ and $A'_{i+1} = A_{i+1}\setminus \bigcup_{j<i}A'_j$.  
Note that $a_\iota=[A_\iota]$ is dense in $\baire X$ provided for each nonempty open $U\in\wp(X)$ we have $A_\iota\cap U \notin\meager$. We call such sets `nowhere meager'.
More generally, we adopt the following conventions.

\begin{definition}\label{defNowhere}
Let $X$ be a topological space and $\kappa$ a cardinal. We say that: 
\begin{itemize}

\item $A\subseteq X$ is {\em nowhere meager} if for each nonempty open $U \subseteq X$ we have that $A \cap U \notin\meager$.

\item $A\subseteq X$ is {\em somewhere meager} if it is not nowhere meager.

\item $X$ is {\em everywhere Baire $\kappa$-resolvable} if each open $U\subseteq X$ is Baire $\kappa$-resolvable (seen as a subspace of $X$).

\item $X$ is {\em somewhere Baire $\kappa$-resolvable} if there is some nonempty open $U\subseteq X$ that is Baire $\kappa$-resolvable.

\item $X$ is {\em nowhere Baire $\kappa$-resolvable} if it is not somewhere Baire $\kappa$-resolvable.

\end{itemize}
\end{definition}

So, $A\subseteq X$ is somewhere meager iff there is a nonempty open $U\subseteq X$ with $A\cap U\in\meager$. 
We have the following characterizations of Baire resolvability:

\begin{lemma}\label{lemmDefRes}
A Baire space $X$ is Baire $\kappa $-resolvable iff it can be partitioned into $\kappa$ many nowhere meager sets.
\end{lemma}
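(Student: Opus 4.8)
The plan is to observe that the lemma is essentially a reformulation of the definition of Baire $\kappa$-resolvability, the only real content being the identification of dense classes with nowhere meager sets. Recall that, taking $Y = X$ in Definition \ref{defBaireRes}, the space $X$ is Baire $\kappa$-resolvable exactly when it admits a partition $\{A_\iota \mid \iota<\kappa\}$ with $\C[A_\iota] = [X]$ for each $\iota$, i.e.\ a partition into $\kappa$ pieces whose classes are each dense in $\baire X$. So I would reduce the entire statement to the single claim that, for $A \subseteq X$, the class $[A]$ is dense if and only if $A$ is nowhere meager. Granting this, both directions of the lemma are immediate, since a partition of $X$ into dense-classed pieces is literally the same thing as a partition into nowhere meager pieces.

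To prove that claim I would first unpack $\C[A] = [X]$. Since $\C[A]$ is, by Theorem \ref{thm:baire is monadic}, the least closed element of $\baire X$ above $[A]$, and $[X]$ is the top element, we have $\C[A] = [X]$ iff every closed $C$ with $[A] \sqsubseteq [C]$ already satisfies $[C] = [X]$. Writing $V := X\setminus C$, so that $V$ ranges over the open subsets of $X$, the relation $[A] \sqsubseteq [C]$ becomes $A \cap V \in \meager$ and the equality $[C] = [X]$ becomes $V \in \meager$. Thus $[A]$ is dense iff for every open $V$, the condition $A\cap V \in \meager$ forces $V \in \meager$; taking contrapositives, iff $A \cap V \notin \meager$ whenever $V \notin \meager$.

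The one genuine input is the Baire hypothesis, which enters precisely here: in a Baire space an open set is meager iff it is empty, so the condition just obtained is equivalent to demanding $A \cap V \notin \meager$ for every \emph{nonempty} open $V$, which is exactly the statement that $A$ is nowhere meager. This finishes the reduction and hence the lemma. I do not expect any serious obstacle; the argument is essentially bookkeeping once the least-closed-element description of $\C$ and the Baire property are in hand. The main point to be careful about is to invoke the Baire assumption at exactly the step where one passes between ``meager open'' and ``empty,'' as this is the only place where the hypothesis that $X$ is a Baire space is actually used.
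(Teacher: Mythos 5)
Your proof is correct and takes essentially the same route as the paper: the paper states this lemma without a separate proof, presenting it as an immediate consequence of the preceding observations identifying dense elements $[A]$ of $\baire X$ with nowhere meager sets $A$ (via the closed-complement reformulation), which is exactly your reduction. Your explicit unpacking of $\C[A]=[X]$ through closed elements, and your isolation of where the Baire hypothesis enters (meager open $\Leftrightarrow$ empty), fills in precisely the bookkeeping the paper leaves implicit.
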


\begin{lemma}\label{lemmGlobalToLocalRes}
Let $X$ be a topological space and $\kappa$ a nonzero cardinal. Then $X$ is Baire $\kappa$-resolvable iff it is everywhere Baire $\kappa$-resolvable.
\end{lemma}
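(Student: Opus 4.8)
The plan is to prove the two implications separately, with essentially all of the content residing in the forward direction. Throughout I read ``each $U\subseteq X$'' as ranging over the nonempty open subspaces of $X$ (consistent with the paper's convention of reserving the letter $U$ for open sets), and I assume $X$ is a Baire space, as the very notion of Baire $\kappa$-resolvability presupposes this. The backward implication is then immediate: $X$ is itself a nonempty open subspace of $X$, so instantiating everywhere Baire $\kappa$-resolvability at $U=X$ yields at once that $X$ is Baire $\kappa$-resolvable. Hence I concentrate on showing that if $X$ is Baire $\kappa$-resolvable, then every nonempty open $U\subseteq X$ is Baire $\kappa$-resolvable as a subspace.

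For the forward direction the strategy is simply to restrict a global resolution. Using Lemma~\ref{lemmDefRes} I fix a partition $\{A_\iota \mid \iota<\kappa\}$ of $X$ into nowhere meager sets, and for a fixed nonempty open $U$ I propose the family $\{A_\iota\cap U \mid \iota<\kappa\}$. That this is a partition of $U$ is routine, since the $A_\iota\cap U$ are pairwise disjoint with union $(\bigcup_\iota A_\iota)\cap U = U$. Moreover $U$, being an open subspace of the Baire space $X$, is again a Baire space, so Lemma~\ref{lemmDefRes} applies inside $U$ and it suffices to verify that each $A_\iota\cap U$ is nowhere meager in $U$.

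The heart of the argument, and the step I expect to require the most care, is transferring nowhere-meagerness from $X$ down to the subspace $U$. Given a nonempty relatively open $W\subseteq U$, openness of $U$ guarantees that $W$ is open in $X$; since $W\subseteq U$ we have $(A_\iota\cap U)\cap W = A_\iota\cap W$, and as $W$ is a nonempty open subset of $X$ and $A_\iota$ is nowhere meager in $X$, we obtain $A_\iota\cap W\notin\meager$. It remains to pass from non-meagerness in $X$ to non-meagerness in $U$, and this is exactly where Lemma~\ref{lemRelMeag} does the work: taking $A=U$ there, any subset of $U$ that is meager in the subspace $U$ is meager in $X$, so contrapositively $A_\iota\cap W$ is not meager in $U$. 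Thus each $A_\iota\cap U$ meets every nonempty relatively open subset of $U$ in a non-meager set, i.e.~is nowhere meager in $U$, and Lemma~\ref{lemmDefRes} delivers that $U$ is Baire $\kappa$-resolvable.

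I would flag two points where the hypotheses are genuinely used and which are the likely pitfalls. First, openness of $U$ is essential: it is what makes a relatively open $W=V\cap U$ an \emph{open} subset of $X$, so that the nowhere-meagerness of $A_\iota$ can be invoked on it; for a general subspace $V\cap U$ need not be open and the argument collapses. Indeed the statement genuinely fails for arbitrary subsets (for instance, a meager or Baire-property-free subspace cannot be resolved in this way), which is why the quantification must be over open subspaces. Second, I use only the direction of Lemma~\ref{lemRelMeag} that holds for all subspaces, namely meager-in-$U$ implies meager-in-$X$; the reverse implication is false in general, so the proof must be arranged to lean solely on this one direction, as above.
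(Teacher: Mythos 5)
Your proof is correct and takes essentially the same route as the paper: restrict a global Baire $\kappa$-resolution $\{A_\iota \mid \iota<\kappa\}$ to a nonempty open $U$ and verify that each $A_\iota\cap U$ is nowhere meager in the subspace $U$. The paper's proof simply asserts this last transfer step, whereas you justify it explicitly via Lemma~\ref{lemRelMeag}; your reading of ``each $U\subseteq X$'' as ranging over nonempty open subspaces also matches how the paper applies the lemma.
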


\proof
Clearly if $X$ is everywhere Baire $\kappa$-resolvable, then $X$ is Baire $\kappa$-resolvable. Conversely, 
suppose that $X$ is Baire $\kappa$-resolvable and $\{A_\iota\mid \iota<\kappa\}$ is a Baire $\kappa$-resolution of $X$. Let $U \subseteq X$ be nonempty open. 
Since $A_\iota$ is nowhere meager, $A_\iota\cap U$ is nowhere meager in $U$. Thus, $\{A_\iota\cap U\mid \iota<\kappa\}$ is a Baire $\kappa$-resolution of $U$.
\endproof

Let $X$ be a Baire space and $\kappa\le\omega$.
If $X$ is Baire $\kappa$-resolvable, then $X$ is $\kappa$-resolvable, given that Baire-dense sets must be dense. 
The next example shows that the converse is not true in general.

\begin{example}\label{exOmPluOne}
Let $X = \omega+1$ with the topology whose nonempty open sets are of the form $[n,\omega]$ where $n<\omega$. Then $\{\omega\}$ is dense in $X$, and being a singleton, it cannot be written as a countable union of nowhere dense sets, so $\{\omega\}\notin\meager$. Therefore, every nonempty open set in $X$ is not meager, and hence $X$ is a Baire space.

Each infinite subset of $[0,\omega)$ is dense. Therefore, if we write $[0,\omega)$ as a countable union of infinite disjoint subsets of $[0,\omega)$, we obtain that $X$ is $\omega$-resolvable. On the other hand, $[0,\omega)=\bigcup_{n<\omega}[0,n]$ and each $[0,n]$ is nowhere dense, so $[0,\omega)\in\meager$. Thus, no subset of $[0,\omega)$ is nowhere meager, yielding that $X$ is not Baire resolvable.
\end{example}

As we pointed out in the Introduction, our main Baire resolvability results follow from the
Disjoint Refinement Lemma (see \cite[Lem.~7.5]{CN74} and \cite[Notes for \S 7]{CN74} for the history of this result).
This lemma has been used in proofs of resolvability in other contexts (see e.g.~\cite{Comfort96}); ours follows a similar pattern.

\begin{lemma}[Disjoint Refinement Lemma]\label{lemmDRL}
Let $\kappa$ be an infinite cardinal and $\{ A_\iota  \mid \iota<\kappa \}$ a family of sets such that $|A_\iota|=\kappa$ for each $\iota<\kappa$. Then there is a family $\{ B_\iota \mid \iota<\kappa \}$ of pairwise disjoint sets such that $B_\iota\subseteq A_\iota$ and $|B_\iota|=\kappa$ for each $\iota<\kappa$.
\end{lemma}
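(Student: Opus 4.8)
The plan is to construct the sets $B_\iota$ by a single transfinite recursion of length $\kappa$, choosing one element at a time. First I would reduce the problem to selecting a doubly-indexed family of pairwise distinct elements $b_{\iota,\beta}\in A_\iota$ for $\iota,\beta<\kappa$, and then set $B_\iota=\{b_{\iota,\beta}\mid\beta<\kappa\}$. Indeed, if all the $b_{\iota,\beta}$ are distinct, then the $B_\iota$ are automatically pairwise disjoint, each is contained in the corresponding $A_\iota$, and each has cardinality $\kappa$ (being indexed by the $\kappa$ distinct elements $b_{\iota,\beta}$, $\beta<\kappa$). So the entire content of the lemma is reduced to producing such a family.

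The key cardinal-arithmetic fact I would use is that $|\kappa\times\kappa|=\kappa$ for every infinite cardinal $\kappa$, which lets me fix a bijection and thereby enumerate the index set $\kappa\times\kappa$ as $\{(\iota_\xi,\beta_\xi)\mid\xi<\kappa\}$ in order type $\kappa$. I would then define $b_{\iota_\xi,\beta_\xi}$ by recursion on $\xi<\kappa$: at stage $\xi$, the set $E_\xi$ of elements already chosen at strictly earlier stages has cardinality at most $|\xi|<\kappa$, whereas $|A_{\iota_\xi}|=\kappa$; hence $A_{\iota_\xi}\setminus E_\xi$ is nonempty (in fact still of cardinality $\kappa$), and I pick $b_{\iota_\xi,\beta_\xi}$ to be any of its elements. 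Since every choice avoids all previous ones, the resulting family is pairwise distinct, as required.

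The only point that genuinely needs checking — and really the only thing that could go wrong — is that at every stage an unused element of $A_{\iota_\xi}$ remains, which is exactly the inequality $|E_\xi|<\kappa=|A_{\iota_\xi}|$. This holds because $\xi<\kappa$ forces $|\xi|<\kappa$ and each earlier stage contributes at most one new element. I therefore expect no substantive obstacle beyond this bookkeeping: the argument is a routine transfinite recursion combined with the Axiom of Choice, which is invoked both to enumerate $\kappa\times\kappa$ and to make the successive selections.
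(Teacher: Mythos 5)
Your proof is correct. One point of comparison is moot here: the paper does not prove this lemma at all --- it is quoted verbatim from the literature, with a citation to Comfort and Negrepontis (\cite[Lem.~7.5]{CN74}), so there is no in-paper argument to measure yours against. What you have written is essentially the standard proof of the Disjoint Refinement Lemma: reduce to choosing pairwise distinct elements $b_{\iota,\beta}\in A_\iota$ for $(\iota,\beta)\in\kappa\times\kappa$, fix a bijection witnessing $|\kappa\times\kappa|=\kappa$ to linearize the recursion, and observe that at stage $\xi<\kappa$ at most $|\xi|<\kappa$ elements have been consumed, so $A_{\iota_\xi}$ still has $\kappa$ unused elements. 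All three verifications (containment, disjointness, cardinality of each $B_\iota$) follow immediately from distinctness of the $b_{\iota,\beta}$, exactly as you say, and the cardinality bookkeeping at each stage is the only place where anything could fail; you have identified and checked it. So your contribution relative to the paper is simply that it makes the lemma self-contained rather than outsourced to a reference.
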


We will use the following version of the Disjoint Refinement Lemma:

\begin{lemma}\label{lemmDisjRef}
Let $\kappa$ be an infinite cardinal, $X$ a set, and $\mathcal K$ a family of subsets of $X$ such that $|\mathcal K| \leq \kappa$ and $|K| \geq \kappa$ for each $K\in\mathcal K$. Then there is a partition $\{ A_\iota \mid \iota < \kappa \}$ of $X$ such that $ A_\iota \cap K  \neq \varnothing$ for each $K\in\mathcal K$ and $\iota < \kappa$. 
\end{lemma}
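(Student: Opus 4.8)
The plan is to reduce directly to the Disjoint Refinement Lemma (Lemma~\ref{lemmDRL}). First I would record the relevant cardinality bookkeeping: since $\kappa$ is infinite and $1 \le |\mathcal K| \le \kappa$, the product index set $\mathcal K \times \kappa$ has cardinality exactly $\kappa$; and picking any $K \in \mathcal K$ gives $|X| \ge |K| \ge \kappa$, so $X$ is large enough to support a partition into $\kappa$ nonempty blocks. The degenerate case $\mathcal K = \varnothing$ I would dispose of separately, as then the condition on the $A_\iota$ is vacuous and any partition of $X$ into $\kappa$ blocks works.

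Next I would feed the family $\{ K \mid (K,\iota) \in \mathcal K \times \kappa \}$ into Lemma~\ref{lemmDRL}. Via a bijection $\mathcal K \times \kappa \cong \kappa$ and by shrinking each $K$ to a subset of size exactly $\kappa$, the hypotheses of the lemma are met, yielding a family $\{ B_{K,\iota} \mid K \in \mathcal K,\ \iota<\kappa \}$ of pairwise disjoint sets with $B_{K,\iota} \subseteq K$ and $|B_{K,\iota}| = \kappa$ (of this, only nonemptiness will actually be used).

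Then I would assemble the blocks by grouping over $K$: set $A'_\iota := \bigcup_{K \in \mathcal K} B_{K,\iota}$ for each $\iota < \kappa$. Because the $B_{K,\iota}$ are pairwise disjoint, the $A'_\iota$ are pairwise disjoint; and since $A'_\iota \supseteq B_{K,\iota} \neq \varnothing$, each $A'_\iota$ is nonempty and meets every $K \in \mathcal K$. To turn this into a genuine partition of all of $X$, I would absorb the remainder by replacing $A'_0$ with $A_0 := A'_0 \cup (X \setminus \bigcup_{\iota<\kappa} A'_\iota)$ while keeping $A_\iota := A'_\iota$ for $\iota > 0$; enlarging the $0$-th block preserves disjointness and the property of meeting every $K$, and now the blocks cover $X$.

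The only real care needed is in matching the exact hypotheses of Lemma~\ref{lemmDRL}---exactly $\kappa$ sets, each of size exactly $\kappa$---through the identity $|\mathcal K \times \kappa| = \kappa$ and the shrinking step, and in verifying that the final absorption yields a partition with nonempty blocks that still covers $X$. These points I expect to be the main (if mild) obstacle, the remainder being routine.
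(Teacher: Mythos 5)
Your proof is correct, and it rests on the same key tool as the paper's: the Disjoint Refinement Lemma (Lemma~\ref{lemmDRL}), followed by the same trick of absorbing the leftover $X \setminus \bigcup_{\nu<\kappa} A'_\nu$ into the $0$-th block. The difference is purely in how the refinement is organized. The paper applies Lemma~\ref{lemmDRL} once to the enumerated and shrunk family $\{K'_\iota \mid \iota<\kappa\}$, obtaining pairwise disjoint $B_\iota \subseteq K'_\iota$ of size $\kappa$, and then performs a second decomposition by hand: each $B_\iota$ is enumerated as $\{b_{\iota\nu}\mid \nu<\kappa\}$ and the blocks are taken to be the transversals $A'_\nu = \{b_{\iota\nu}\mid \iota<\kappa\}$, so that each block meets each $K_\iota$ in a single point. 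You instead push that second decomposition into the index set: you apply the lemma to the family indexed by $\mathcal K\times\kappa$, with each $K$ repeated $\kappa$ times, obtaining disjoint sets $B_{K,\iota}\subseteq K$ directly, and form the blocks as unions $A'_\iota=\bigcup_{K\in\mathcal K} B_{K,\iota}$. The two routes are equally valid; yours trades the paper's explicit transversal bookkeeping for a larger (but still size-$\kappa$) index set, and both depend on reading Lemma~\ref{lemmDRL} as a statement about indexed families in which entries may repeat --- note that the paper needs this reading as well, since its enumeration of $\mathcal K$ has repetitions whenever $|\mathcal K|<\kappa$. Your explicit treatment of the degenerate case $\mathcal K=\varnothing$ (where the paper's enumeration step is not even available) is a small point in your favor, though there one must read ``partition'' as permitting empty blocks when $|X|<\kappa$.
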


\proof
Enumerate $\mathcal K$ as $\{ K_\iota \mid \iota <\kappa \}$, and for each $\iota<\kappa$ let $K'_\iota$ be a subset of $K_\iota$ with $|K'_\iota| = \kappa$.
By the Disjoint Refinement Lemma, there is a family $\{ B_\iota \mid \iota<\kappa \}$ of pairwise disjoint sets such that $B_\iota\subseteq K'_\iota$ and $|B_\iota|=\kappa$ for each $\iota<\kappa$.
Enumerating each $B_\iota$ as $\{b_{\iota\nu}\mid \nu<\kappa  \}$, for $\nu<\kappa$, let $A'_\nu = \{b_{\iota\nu}\mid \iota<\kappa \}$. 
To ensure that we obtain a partition, let $A_0 = A'_0\cup \left( X \setminus \bigcup_{\nu <\kappa} A'_\nu \right)$ and for $\nu >0$, let $A_\nu  = A'_\nu $. 
Then $\{ A_\nu \mid \nu <\kappa \}$ is the desired partition, since for each $\iota<\kappa$ we have that $b_{\iota\nu} \in A_\nu \cap K'_\iota \subseteq A_\nu \cap K_\iota$.
\endproof

To apply Lemma \ref{lemmDisjRef} to Baire resolvability, we use the notion of a {\em $\kappa$-witnessing family.}

\begin{definition}
Let $X$ be a Baire space and $\kappa$ a cardinal. We call $\mathcal K \subseteq \wp(X)$ a {\em $\kappa$-witnessing family} for $X$ provided $|\mathcal K|\leq \kappa$, $|K| \geq \kappa$ for each $K\in\mathcal K$, and if $A\subseteq X$ is somewhere meager, then there is $K\in\mathcal K$ with $K\cap A = \varnothing$.
\end{definition}

\begin{theorem}\label{theoBaireResolve}
Let $\kappa$ be an infinite cardinal.
Each Baire space that has a $\kappa$-witnessing family is Baire $\kappa$-resolvable.
\end{theorem}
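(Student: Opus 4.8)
The plan is to apply the Disjoint Refinement Lemma in the form of Lemma~\ref{lemmDisjRef} directly to the $\kappa$-witnessing family $\mathcal K$, and then to recognize that the resulting partition consists of nowhere meager sets. First I would observe that the hypotheses of Lemma~\ref{lemmDisjRef} are precisely the first two clauses in the definition of a $\kappa$-witnessing family: $|\mathcal K| \leq \kappa$ and $|K| \geq \kappa$ for each $K \in \mathcal K$. Applying that lemma therefore yields a partition $\{A_\iota \mid \iota < \kappa\}$ of $X$ with the property that $A_\iota \cap K \neq \varnothing$ for every $K \in \mathcal K$ and every $\iota < \kappa$.

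The key step is then to verify that each piece $A_\iota$ is nowhere meager, and this is exactly where the third clause in the definition of a $\kappa$-witnessing family enters, as a clean contrapositive. If some $A_\iota$ were somewhere meager, then the witnessing property would provide a member $K \in \mathcal K$ with $K \cap A_\iota = \varnothing$, directly contradicting the property $A_\iota \cap K \neq \varnothing$ enjoyed by the partition produced by Lemma~\ref{lemmDisjRef}. Hence every $A_\iota$ must be nowhere meager.

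Finally, I would invoke Lemma~\ref{lemmDefRes}: since $X$ is a Baire space partitioned into $\kappa$ many nowhere meager sets, it is Baire $\kappa$-resolvable, which completes the argument. I do not anticipate a genuine obstacle here; the real content of the theorem has been front-loaded into the definition of a $\kappa$-witnessing family, which was evidently engineered so that ``meeting every member of $\mathcal K$'' serves as a sufficient condition for ``being nowhere meager.'' The only points requiring care are the routine matching of the cardinality hypotheses between the two lemmas and the verification that the witnessing family's disjointness-from-somewhere-meager-sets condition is precisely the contrapositive needed to exclude somewhere meager pieces from the partition.
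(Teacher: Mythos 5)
Your proposal is correct and follows essentially the same route as the paper's proof: apply Lemma~\ref{lemmDisjRef} to the witnessing family to obtain a partition meeting every member of $\mathcal K$, then show each piece is nowhere meager by the contrapositive of the witnessing property, and conclude via the characterization in Lemma~\ref{lemmDefRes}. There is no gap and nothing to add.
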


\proof
Let $X$ be a Baire space with a $\kappa$-witnessing family $\mathcal K$. By Lemma~\ref{lemmDisjRef}, there is a family $\{ A_\iota \mid \iota<\kappa \}$ of pairwise disjoint subsets of $X$ such that $ A_\iota \cap K \neq \varnothing$ for each $K\in\mathcal K$ and $\iota<\kappa$. We claim that each $A_\iota$ is nowhere meager. Otherwise, since $\mathcal K$ is a $\kappa$-witnessing family, there is $K\in\mathcal K$ such that $K\cap A_\iota=\varnothing$, a contradiction. Thus, $\{ A_\iota \mid \iota<\kappa \}$ gives the desired Baire $\kappa$-resolution of $X$.
\endproof

In the next two lemmas we show that the uncountable compact sets provide a witnessing family in many Baire spaces.
These lemmas are folklore, but we state them in the precise form we need and briefly recall their proofs.

\begin{lemma}\label{lemmCompUnc}
Let $\kappa = \power{\aleph_0}$ and $X$ be a crowded topological space which is either locally compact Hausdorff or completely metrizable. If $A\subseteq X$ is somewhere meager, then there is a compact $K\subseteq X$ such that $|K|\geq \mathfrak c$ and $A\cap K=\varnothing$.
\end{lemma}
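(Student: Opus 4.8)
The plan is to find, inside a somewhere-meager set's complement, a large compact set with no isolated points, and then show such a set has size at least $\mathfrak c$. First I would unpack the hypothesis: if $A \subseteq X$ is somewhere meager, then by the characterization following Definition~\ref{defNowhere} there is a nonempty open $U \subseteq X$ with $A \cap U \in \meager$. Writing $A \cap U = \bigcup_{n<\omega} N_n$ with each $N_n$ nowhere dense, I would pass to closures and set $F_n = \C N_n$, which are closed nowhere dense (hence with empty interior) subsets of $X$. The goal is to locate a compact set $K \subseteq U$ that avoids all the $F_n$, so that $K \cap A \subseteq K \cap (A\cap U) = \varnothing$, while ensuring $|K| \geq \mathfrak c$.

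The natural way to build such a $K$ is a Cantor-scheme construction: recursively assign to each finite binary string $s \in \power{<\omega}$ a nonempty open set $V_s$ (with compact closure in the locally compact case, or of small diameter in the metric case) so that $\C V_{s0}, \C V_{s1} \subseteq V_s$ are disjoint, shrinking, and each $\C V_s$ is disjoint from $F_{|s|}$. The last condition is the crucial one and is possible precisely because each $F_n$ has empty interior: given any nonempty open $W$, the set $W \setminus F_n$ is a nonempty open set, so I can always refine into the complement of $F_n$ while staying nonempty open. Crowdedness guarantees that every nonempty open set splits into two disjoint nonempty open subsets, giving the branching. The intersection $K = \bigcap_{n} \bigcup_{|s|=n} \C V_s$ then carries a continuous injection from $\power\omega$, so $|K| \geq \mathfrak c$, and $K$ is compact and disjoint from every $F_n$, hence from $A$.

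I expect the main obstacle to be securing compactness of the resulting set $K$ uniformly across the two cases, since local compactness and complete metrizability afford it by different mechanisms. In the locally compact Hausdorff case I would arrange the $\C V_s$ to be compact (shrinking initial $V_\varnothing$ to have compact closure using local compactness), so that $K$ is a closed subset of a compact set and hence compact. In the completely metrizable case I would instead control diameters, choosing $\operatorname{diam} V_s \leq 2^{-|s|}$, so that each branch of the scheme determines a unique point via completeness, giving the embedding of $\power\omega$; compactness of $K$ then follows because $K$ is closed and totally bounded (it is covered by the finitely many sets $\C V_s$ of diameter at most $2^{-n}$ at each level $n$) in a complete metric space. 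The Hausdorff separation axiom is what lets me separate $\C V_{s0}$ from $\C V_{s1}$ by disjoint opens in the locally compact case, while in the metric case the metric does this automatically, so the two constructions converge to the same conclusion $|K| \geq \mathfrak c$ and $A \cap K = \varnothing$.
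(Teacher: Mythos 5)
Your proposal is correct and follows essentially the same route as the paper's proof: both carry out the standard Cantor-scheme (tree) construction of a compact set inside $U$ that avoids the countably many nowhere dense pieces, using compact closures to get compactness in the locally compact Hausdorff case and shrinking diameters plus completeness (closed and totally bounded) in the completely metrizable case, with disjoint branching yielding $|K|\geq\mathfrak c$. The only cosmetic difference is that you index the scheme by open sets $V_s$ and pass to their closures, avoiding $F_n=\C N_n$, whereas the paper directly assigns compact sets $K_{\bf b}$ with nonempty interior avoiding the nowhere dense sets $B_n$ themselves.
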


\proof
The proof follows the standard tree construction method (see, e.g., \cite[Exercise 3.12.11]{Eng89}).
We provide a sketch in the case when $X$ is a crowded locally compact Hausdorff space.
Suppose that $A\subseteq X$ is somewhere meager, so there is a nonempty open $U\subseteq X$ such that $A\cap U\in\meager$.
Then $A\cap U=\bigcup_{n<\omega} B_n$, with each $B_n$ nowhere dense. Using the assumption that $X$ is crowded locally compact Hausdorff, to each finite sequence ${\bf b} = (b_0,\hdots,b_n) \in \{ 0,1 \}^{<\omega}$, we can assign a compact set $K_{\bf b} \subseteq U$ so that
\begin{itemize}
\item $K_{\bf b}$ has nonempty interior;

\item $B_i \cap K_{\bf b} = \varnothing$ for each $i\leq n$;

\item if ${\bf b}$ is an initial segment of ${\bf b}'$, then $K_{{\bf b}'} \subseteq K_{\bf b}$;

\item if $\bf b$, $\bf b'$ are incomparable (i.e.~disagree on some coordinate), then $K_{\bf b} \cap K_{{\bf b}'} = \varnothing$.
\end{itemize}

Let
$K = \bigcap_{n <\omega} \bigcup_{|{\bf b}| = n} K_{\bf b}.$
Being an intersection of compact sets, this is a compact set.
It follows from the construction that each $K_{\bf b}$ is contained in $U$ and that $A\cap K=\varnothing$.
The intersections along infinite sequences ${\bf b} \in \{0,1\}^{\omega}$ are nonempty and disjoint, witnessing that $|K| \geq \mathfrak c$.

The proof for a crowded completely metrizable $X$ is essentially the same, but we let each $K_{\bf b}$ be a ball of radius at most $2^{-|{\bf b}|}$ so that we can use that $X$ is complete. 
\endproof

\begin{lemma}\label{lemmCompactCount}
Let $X$ be either Hausdorff and second-countable or metrizable and of cardinality continuum. Then $X$ contains at most $\mathfrak c$ compact sets.
\end{lemma}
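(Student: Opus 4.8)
The statement claims that a space $X$ which is either (i) Hausdorff second-countable, or (ii) metrizable of cardinality $\mathfrak c$, contains at most $\mathfrak c$ compact sets. The natural strategy is to bound the number of compact subsets by bounding the number of \emph{closed} subsets, since every compact set is in particular closed in a Hausdorff space (and metrizable spaces are Hausdorff). So the goal reduces to showing that each of the two hypotheses forces $|\{C \subseteq X : C \text{ closed}\}| \leq \mathfrak c$.

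For case (i), I would argue as follows. A second-countable space has a countable basis $\{U_n \mid n<\omega\}$. Every open set is a union of basic open sets, so the map sending an open set $V$ to $\{n : U_n \subseteq V\} \subseteq \omega$ is an injection from the collection of open sets into $\wp(\omega)$; hence there are at most $2^{\aleph_0} = \mathfrak c$ open sets, and by complementation at most $\mathfrak c$ closed sets. Since compact sets are closed (Hausdorffness), there are at most $\mathfrak c$ compact sets. This handles (i) cleanly.

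For case (ii), second-countability need not hold, so I would instead use the cardinality bound directly together with the fact that in a metric space compact sets are determined by countable data. The cleanest route: a compact metric space is separable, so each compact $K \subseteq X$ has a countable dense subset $D_K \subseteq K$, and $K = \clo{D_K}$ is the closure of that countable set (closures are computed the same way in the subspace and in $X$ since $K$ is closed). Thus the map $K \mapsto D_K$ realizes each compact set as the closure of some countable subset of $X$. Since $|X| = \mathfrak c$, the number of countable subsets of $X$ is $\mathfrak c^{\aleph_0} = (2^{\aleph_0})^{\aleph_0} = 2^{\aleph_0} = \mathfrak c$, and closure is a function, so there are at most $\mathfrak c$ compact sets.

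\textbf{Expected obstacle.} The two cases really do need different arguments — in case (i) I exploit the countable basis to bound \emph{all} closed sets, whereas in case (ii) I have no basis bound and must instead exploit that compact metric spaces are separable, using the cardinality hypothesis $|X|=\mathfrak c$ to count countable subsets. The main point to get right is the separability step in case (ii): I must justify that a compact subset of a metric space is itself a separable metric space (so that $K$ is the closure of a countable set) and that this closure is computed in $X$, which is immediate since $K$ is closed in $X$. The cardinal arithmetic $\mathfrak c^{\aleph_0} = \mathfrak c$ is routine. I would present the two cases separately and keep each short.
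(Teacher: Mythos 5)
Your proof is correct and follows essentially the same route as the paper: in the Hausdorff second-countable case you bound closed sets via the countable basis, and in the metrizable case you realize each compact set as the closure of a countable subset (the paper gets this from total boundedness, you from separability of compact metric spaces --- the same fact) and then count countable subsets of a continuum-sized set. The extra details you supply (the injection of open sets into $\wp(\omega)$ and the cardinal arithmetic $\mathfrak c^{\aleph_0}=\mathfrak c$) are exactly what the paper leaves implicit.
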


\proof
First suppose that $X$ is Hausdorff and second-countable. The latter implies that $X$ contains at most $\mathfrak c$ open sets, and so
at most $\mathfrak c$ closed sets. Since $X$ is Hausdorff, compact sets are closed, and hence there can be at most $\mathfrak c$ compact sets.

Next suppose that $X$ is metrizable and of cardinality continuum.
Then every compact set $K\subseteq X$ is 
the closure of some countable subset of $X$ (see, e.g., \cite[Thm.~4.1.18]{Eng89}). 
Since $X$ has $\mathfrak c$-many countable subsets, there can be at most $\mathfrak c$ values for $K$.
\endproof

Putting together these results, we arrive at the following:

\begin{theorem}\label{theoResolve}
Suppose that $X$ is a crowded space which is either locally compact, Hausdorff, and second-countable or completely metrizable and continuum sized. Then $X$ is Baire $\mathfrak c$-resolvable.
\end{theorem}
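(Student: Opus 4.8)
The plan is to derive the theorem directly from Theorem~\ref{theoBaireResolve}: it suffices to produce a $\mathfrak c$-witnessing family for $X$, after first checking that $X$ is a Baire space (as the notion of witnessing family, and hence Theorem~\ref{theoBaireResolve}, presupposes this). The latter is standard, since completely metrizable spaces and locally compact Hausdorff spaces both satisfy the Baire category theorem; so in either case of the hypothesis $X$ is indeed a Baire space.

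For the witnessing family, I would set $\kappa = \mathfrak c = \power{\aleph_0}$ and take $\mathcal K$ to be the collection of all compact subsets $K \subseteq X$ with $|K| \geq \mathfrak c$. Three conditions must then be verified. First, $|\mathcal K| \leq \mathfrak c$: in both cases of the hypothesis $X$ meets the conditions of Lemma~\ref{lemmCompactCount} (Hausdorff and second-countable in the first case, metrizable and continuum-sized in the second), so $X$ has at most $\mathfrak c$ compact sets altogether, whence $|\mathcal K| \leq \mathfrak c$. Second, $|K| \geq \mathfrak c$ for each $K \in \mathcal K$, which holds by the very definition of $\mathcal K$. Third, the witnessing property: if $A \subseteq X$ is somewhere meager, then since $X$ is crowded and either locally compact Hausdorff or completely metrizable, Lemma~\ref{lemmCompUnc} supplies a compact $K \subseteq X$ with $|K| \geq \mathfrak c$ and $A \cap K = \varnothing$; this $K$ lies in $\mathcal K$, as required. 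Hence $\mathcal K$ is a $\mathfrak c$-witnessing family, and Theorem~\ref{theoBaireResolve} yields that $X$ is Baire $\mathfrak c$-resolvable.

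The only real bookkeeping is to match the two hypothesis cases of the theorem against the two lemmas. Lemma~\ref{lemmCompUnc} requires only ``crowded, plus locally compact Hausdorff or completely metrizable,'' which is weaker than what the theorem assumes, so it applies in both cases; Lemma~\ref{lemmCompactCount} requires ``Hausdorff and second-countable, or metrizable and continuum-sized,'' and one checks that the first case of the theorem falls under the first case of this lemma and the second case under the second. I do not expect any genuine obstacle at the level of this theorem, since all the difficulty has been pushed into the preceding lemmas — in particular the tree construction underlying Lemma~\ref{lemmCompUnc}, which is where the compact set of size $\geq \mathfrak c$ avoiding a given somewhere-meager set is actually built.
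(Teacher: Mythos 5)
Your proof is correct and takes essentially the same route as the paper: the paper's proof likewise consists of invoking Lemmas~\ref{lemmCompUnc} and~\ref{lemmCompactCount} to conclude that the large compact subsets of $X$ form a $\mathfrak c$-witnessing family, and then applying Theorem~\ref{theoBaireResolve}. The only (cosmetic) difference is that the paper takes the family of \emph{uncountable} compact sets, whereas you take those of cardinality at least $\mathfrak c$; your choice in fact sidesteps the small point that uncountable compact subsets of such spaces automatically have cardinality $\mathfrak c$.
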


\proof
By Lemmas~\ref{lemmCompUnc} and~\ref{lemmCompactCount}, the uncountable compact subsets of $X$ form a $\mathfrak c$-witnessing family for $X$. Therefore, by Theorem~\ref{theoBaireResolve}, $X$ is Baire $\mathfrak c$-resolvable.
\endproof

\begin{remark}
Lemma \ref{lemmCompUnc} can be generalized to the setting of \v Cech-complete spaces, under the assumption that no point has a least neighborhood (equivalently, that every point has infinite character: see, e.g., \cite[Exercise 3.12.11(b)]{Eng89}).
This allows us to extend Theorem \ref{theoResolve} to include any second-countable \v Cech-complete space with the latter property.
\end{remark}

\section{New completeness results for $\sf S5$}

In this section we derive new completeness results for $\sf S5$ and its extensions using our new semantics of Baire algebras.
We start by recalling that a map $h \colon A\to B$ between two closure algebras $(A,\C_A)$ and $(B,\C_B)$ is a {\em homomorphism of closure algebras} if $h$ is a homomorphism of Boolean algebras and $h (\C_A a) = \C_B h(a)$ for each $a\in A$. We say that $h$ is an {\em embedding} if it is injective, and that $h$ is an {\em isomorphism} if it is bijective.

Recall that if $X$ and $Y$ are topological spaces and $f\colon X\to Y$, then $f$ is {\em continuous} if $f^{-1}(B)$ is open whenever $B\subseteq Y $ is open, {\em open} if $f(A)$ is open whenever $A\subseteq X$ is open, and an {\em interior map} if it is both continuous and open.
It is well known (see, e.g., \cite[p.~99]{RS63}) that if $f:X\to Y$ is an interior map, then $f^{-1}:\Kur(Y)\to\Kur(X)$ is a homomorphism of closure algebras. Moreover, if $f$ is surjective, then $f^{-1}$ is an embedding. To obtain analogous results for Baire algebras, it is sufficient to work with partial maps.
We remind the reader that $[A]$ denotes the equivalence class of $A$ modulo the meager sets.

\begin{definition}\label{defBaireMap}
Let $X,Y$ be arbitrary topological spaces and $f:X\to Y$ a partial map.
\begin{enumerate}
\item We say that $f$ is defined {\em almost everywhere} if the complement of the domain of $f$ is meager.
\item We call $f$ {\em non-degenerate} if $B\subseteq Y$ meager implies that $f^{-1}(B)$ is meager. 
\item We define $h\colon \baire Y \to \baire X$ by
\[
h [ A ] = [f^{-1} (A)] \mbox{ for each } A\subseteq Y.
\]
\end{enumerate}
\end{definition}

\begin{lemma}\label{lem:h well defined}
Let $X ,Y$ be arbitrary topological spaces and $f\colon X \to Y$ a partial map. If $f$ is defined almost everywhere and is non-degenerate,
then $h\colon \baire Y \to \baire X$ is a well-defined homomorphism of Boolean algebras.
\end{lemma}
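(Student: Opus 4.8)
The plan is to check in turn that $h$ is well defined, preserves meets, and preserves complements; since a map between Boolean algebras that preserves meets and complements is automatically a Boolean homomorphism (joins and constants follow via De Morgan), this will suffice. Throughout I write $D = \dom{f}$ and use that meets and complements on $\baire{\cdot}$ are computed representative-wise, as in \eqref{eqSigmaUnion}.

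First I would establish well-definedness. Suppose $A \eqm B$, so that $A \setminus B$ and $B \setminus A$ are meager in $Y$. For a partial map one has the set identity $f^{-1}(A) \setminus f^{-1}(B) = f^{-1}(A \setminus B)$, since both sides equal $\{x \in D : f(x) \in A \setminus B\}$. As $f$ is proper and $A \setminus B \in \meager$, the set $f^{-1}(A \setminus B)$ is meager in $X$, and symmetrically so is $f^{-1}(B) \setminus f^{-1}(A) = f^{-1}(B \setminus A)$. Hence $f^{-1}(A) \eqm f^{-1}(B)$, so $h[A] = h[B]$ and $h$ is well defined. Preservation of meets is then purely set-theoretic and needs no hypothesis on $f$: from $f^{-1}(A \cap B) = f^{-1}(A) \cap f^{-1}(B)$, valid for any partial map, we obtain $h([A] \band [B]) = h[A] \band h[B]$ directly.

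The only nontrivial step, and the one place where the hypothesis that $f$ is defined almost everywhere is used, will be preservation of complements. Here partiality of $f$ matters: we have $f^{-1}(Y \setminus A) = D \setminus f^{-1}(A)$, whereas the complement taken in $\baire X$ corresponds to $X \setminus f^{-1}(A) = (X \setminus D) \cup (D \setminus f^{-1}(A))$. Thus the two candidate representatives differ exactly by $X \setminus D$:
\[
f^{-1}(Y\setminus A) \subseteq X\setminus f^{-1}(A) \quad\text{and}\quad (X\setminus f^{-1}(A))\setminus f^{-1}(Y\setminus A) = X\setminus \dom{f}.
\]
Since $f$ is defined almost everywhere, $X \setminus D$ is meager, so $f^{-1}(Y \setminus A) \eqm X \setminus f^{-1}(A)$, giving $h(\bne[A]) = \bne\, h[A]$. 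Finally $h[\varnothing] = [\varnothing] = 0$, and $h[Y] = [D] = [X] = 1$ (again because $X \setminus D$ is meager), so $h$ is a homomorphism of Boolean algebras. The delicate point to watch is precisely that $f^{-1}$ of a complement is only the complement relative to $D$, not to all of $X$; everything else reduces to the two standard preimage identities together with properness.
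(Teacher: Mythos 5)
Your proof is correct and follows essentially the same route as the paper's: well-definedness via the identity $f^{-1}(A)\setminus f^{-1}(B)=f^{-1}(A\setminus B)$ together with properness, meets by the purely set-theoretic preimage identity, and complements by observing that $X\setminus f^{-1}(A)$ and $f^{-1}(Y\setminus A)$ differ exactly by the complement of the domain, which is meager since $f$ is defined almost everywhere. Your extra remarks (reduction of joins/constants to meets and complements, and the check that $h$ preserves $0$ and $1$) are fine but not needed beyond what the paper does.
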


\proof
Let $A,B \subseteq Y$ with $[A] = [B]$. Then $A\setminus B$ is meager, so $f^{-1}(A) \setminus f^{-1}(B) = f^{-1} (A\setminus B)$ is meager since $f$ is non-degenerate. By a symmetric argument, $f^{-1}(B) \setminus f^{-1}(A)$ is also meager.
Thus, $[f^{-1}(A)] = [f^{-1}(B)]$, and hence $h$ is well defined.

In addition,
\[
h([A] \sqcap [B]) = [f^{-1}(A \cap B) ] =[f^{-1}(A)] \sqcap [f^{-1}(B) ] = h(A)\sqcap h(B).
\]
Therefore, $h$ commutes with binary meets. To see that it also commutes with complements, let $M$ be the complement of the domain of $f$. Then $X \setminus f^{-1}(A) = f^{-1}(Y \setminus A)\cup M$. Since $f$ is defined almost everywhere, $M$ is meager, so $[ X \setminus f^{-1}(A) ] = [ f^{-1}(Y \setminus A) ]$. Thus,
\[
- h [A] = [ X \setminus f^{-1}(A) ] = [ f^{-1}(Y \setminus A) ] = h (- [A]).
\]
Consequently, $h$ is a well-defined homomorphism of Boolean algebras.
\endproof

Note, however, that $h$ may fail to be injective.
To obtain an embedding of Boolean algebras, we need an additional condition on $f$.

\begin{definition}
We call a partial map $f\colon X\to Y$ {\em exact} if $f^{-1}(A)$ meager implies that $A$ is meager for each $A\subseteq Y$.
\end{definition}

\begin{lemma}\label{lem:embed}
Let $X,Y$ be Baire spaces and let $f\colon X\to Y$ be non-degenerate, exact, and defined almost everywhere. Then $h\colon \baire Y \to \baire X$ is an embedding of Boolean algebras.
\end{lemma}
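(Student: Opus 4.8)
The plan is to leverage what has already been established in Lemma~\ref{lem:h well defined}. Since $f$ is proper and defined almost everywhere, that lemma already gives us that $h\colon \baire Y \to \baire X$ is a well-defined homomorphism of Boolean algebras. Hence nothing about meets, joins, or complements remains to be verified, and the entire content of the present statement reduces to establishing that $h$ is \emph{injective}. The standard route I would take is to recall that a homomorphism of Boolean algebras is injective precisely when its kernel is trivial, i.e.\ when $h(a) = 0$ implies $a = 0$. Indeed, if $h[A] = h[B]$, then applying $h$ to the symmetric difference $([A] \wedge -[B]) \vee (-[A] \wedge [B])$ yields $0$, so triviality of the kernel forces this symmetric difference to be $0$, whence $[A] = [B]$.

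With that reduction in hand, the second step is to compute the kernel explicitly. In $\baire Y = \bquot Y$ the bottom element is $[\varnothing]$, and $[A] = 0$ holds exactly when $A$ is meager. Unwinding the definition of $h$, we have $h[A] = [f^{-1}(A)]$, so $h[A] = 0$ says precisely that $f^{-1}(A)$ is meager. Now the exactness hypothesis on $f$ is exactly the assertion that $f^{-1}(A)$ meager implies $A$ meager; applying it, we conclude $A$ is meager, that is, $[A] = 0$. Therefore the kernel of $h$ is trivial, and by the first step $h$ is injective and hence an embedding of Boolean algebras.

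I do not expect a genuine obstacle here: the delicate analytic work has all been front-loaded into the definitions. The only subtlety worth flagging is conceptual rather than technical, namely the recognition that the \emph{exactness} condition has been engineered to be precisely the statement that the kernel of $h$ is trivial, so the proof is little more than transcribing that hypothesis through the identification $[A] = 0 \iff A \in \meager$. I would also note that the Baire-space hypotheses on $X$ and $Y$ are not actually needed to obtain injectivity of the Boolean homomorphism; they are carried along because they are the standing setting in which these partial maps will be applied, and exactness alone suffices for the conclusion as stated.
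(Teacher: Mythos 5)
Your proposal is correct and follows essentially the same route as the paper: both invoke Lemma~\ref{lem:h well defined} for the homomorphism part and then apply exactness to a preimage to get injectivity, the only cosmetic difference being that the paper argues directly from $h[A]=h[B]$ via $f^{-1}(A\setminus B)$ while you pass through triviality of the kernel. Your side remark is also accurate: the Baire hypotheses on $X$ and $Y$ are not used in this argument, and indeed the paper's own proof does not use them either.
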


\proof
By Lemma~\ref{lem:h well defined}, $h$ is a homomorphism of Boolean algebras. To see that it is an embedding, let $h [ A ] = h [ B ]$.
Then $ f^{-1}(A) \setminus f^{-1}(B) = f^{-1} (A\setminus B)$ is meager.
Since $f$ is exact, $A \setminus B $ is meager. A symmetric argument yields that $B\setminus A $ is meager. Thus, $[A] = [B]$, and hence $h$ is an embedding.
\endproof

Next we detail some conditions that will ensure that $h$ is not only a Boolean homomorphism, but indeed a homomorphism of closure algebras.

\begin{definition}\label{DefBaireMap}
Let $f\colon X \to Y$ be a partial map.
We say that
\begin{enumerate}
\item $f$ is {\em Baire-continuous} if $V$ an open subset of $Y$ implies that $[ f^{-1}(V) ]$ is an open element of $\baire X$;

\item $f$ is {\em Baire-open} if $U$ a nonempty open subset and $M$ a meager subset of $X$ imply that $[f( U \setminus M )]$ is a nonzero open element of $\baire Y$;

\item $f$ is a {\em Baire map} if it is defined almost everywhere and is non-degenerate, Baire-continuous, and Baire-open.
\end{enumerate}
\end{definition}

\begin{remark}
If a Baire map $f\colon X\to Y$ exists, then $X$ must be a Baire space.
This is because if $U\subseteq X$ were a nonempty open meager set, then $ [ f(U\setminus U) ] = 0$ in $\baire Y$, so $f$ could not be Baire-open. 
\end{remark}

\begin{lemma}\label{lem:BaireIsEmbed}
Let $X,Y$ be topological spaces. If $f\colon X\to Y$ is a Baire map, then $h\colon \baire Y \to \baire X$ is a homomorphism of closure algebras. If in addition $f$ is exact, then $h$ is an embedding of closure algebras.
\end{lemma}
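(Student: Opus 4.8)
The plan is to build on Lemma~\ref{lem:h well defined}, which already gives that $h$ is a well-defined Boolean homomorphism (using that $f$ is proper and defined almost everywhere), and thereby reduce the closure-algebra statement to verifying the single identity $h(\C a) = \C h(a)$ for all $a \in \baire Y$. I would prove the two inclusions separately, using Baire-continuity for one and Baire-openness for the other, in exact parallel with the classical fact that the preimage map of an interior map is a homomorphism of Kuratowski algebras. Throughout I would exploit the structural fact, valid since $\baire X$ and $\baire Y$ are monadic algebras (Theorem~\ref{thm:baire is monadic}), that an element is closed iff it is open iff it is clopen; in particular $\C b$ is clopen for every $b$, and interior preserves finite meets.

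For the first inclusion, $\C h(a) \le h(\C a)$, I would write $\C a = [G]$ for a closed $G \subseteq Y$, so that $h(\C a) = [f^{-1}(G)]$. Since $Y \setminus G$ is open, Baire-continuity gives that $-h(\C a) = [f^{-1}(Y\setminus G)]$ is open, whence $h(\C a)$ is closed. As $a \le \C a$ and $h$ is monotone, $h(\C a)$ is a closed element above $h(a)$, so by the leastness of $\C h(a)$ among closed elements above $h(a)$ one obtains $\C h(a) \le h(\C a)$.

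For the second inclusion, $h(\C a) \le \C h(a)$, which is where the real work lies, Baire-openness is the crucial ingredient. I would argue by contradiction, assuming $d := h(\C a) \wedge -\C h(a) \ne 0$. Both $h(\C a)$ (closed, hence clopen by the first part) and $-\C h(a)$ (complement of a clopen element) are open, so $d$ is a nonzero open element; I would fix an open $V \subseteq X$ with $[V] = d$, noting $V$ is nonempty since it is non-meager. Writing $a = [A]$, from $d \le -\C h(a) \le -h(a)$ and $d \le h(\C a) = [f^{-1}(G)]$ one reads off that $V \cap f^{-1}(A)$ and $V \cap f^{-1}(Y\setminus G)$ are both meager. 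Deleting these two meager sets together with the meager complement of the domain of $f$ to form a meager $M$, Baire-openness applied to $V$ yields a nonzero open element $[f(V\setminus M)]$ of $\baire Y$ that, by construction, lies below $[G \setminus A] = \C a \wedge -a$. This contradicts the identity $\I(\C a \wedge -a) = \I\C a \wedge \I(-a) = \C a \wedge -\C a = 0$, which holds because $\C a$ is clopen and interior preserves meets.

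Finally, the embedding claim would be immediate: assuming in addition that $f$ is exact, Lemma~\ref{lem:embed} gives that $h$ is an injective homomorphism of Boolean algebras, and since $h$ has just been shown to commute with $\C$, it is an injective homomorphism of closure algebras, i.e.\ an embedding of closure algebras. The main obstacle is the second inclusion: the subtlety is that $h(\C a)$ need not be the preimage of a literal topological closure, so one cannot argue pointwise as in the Kuratowski case, and one must instead convert the failure of the inequality into a nonzero open element beneath $\C a \wedge -a$ via Baire-openness and contradict the monadic identity above.
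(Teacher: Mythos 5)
Your proof is correct and takes essentially the same route as the paper's: Lemma~\ref{lem:h well defined} for the Boolean part, Baire-continuity plus leastness of closure for $\C h(a)\sqsubseteq h(\C a)$, Baire-openness applied to a nonempty open representative of the assumed-nonzero difference to manufacture a nonzero open element of $\baire Y$ lying below $\C a \wedge -a$, and Lemma~\ref{lem:embed} for the exactness claim. The only (cosmetic) divergence is the final contradiction: the paper exhibits $\C[A] - [f(V)\setminus A]$ as a closed element above $[A]$ strictly below $\C[A]$, contradicting leastness, whereas you contradict the monadic identity $\I(\C a \wedge -a) = \C a \wedge -\C a = 0$; these two finishes are equivalent.
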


\proof
By Lemma~\ref{lem:h well defined}, $h$ is a 
homomorphism of Boolean algebras. Let $A\subseteq Y$. We first show that $\C h [ A ] \sqsubseteq h   \C [ A ]   $.
Since $\C [ A]$ is closed, there is a closed set $D\subseteq Y$ such that $[D] = \C [ A]$. Because $h$ is a homomorphism of Boolean algebras, it is order preserving. Therefore, $[A] \sqsubseteq \C [ A ]$ implies $h [ A ] \sqsubseteq h   \C[ A ]  = [f^{-1}( D )]$. Let $U=Y\setminus D$ and recall that $M$ is the complement of the domain of $f$. Since $U$ is open and $M$ is meager in $Y$, we have
\[
[ f^{-1}(D) ] = [ f^{-1}(Y\setminus U) ] = [ f^{-1}(Y\setminus U) \cup M ] = [ X\setminus f^{-1}(U) ] = - [f^{-1}(U)].
\]
Because $f$ is Baire-continuous and $U$ is open, $[f^{-1}(U)]$ is open, hence $[f^{-1}( D )]$ is closed in $\baire X$. Thus, $\C h [ A ] \sqsubseteq [f^{-1}( D )] = h \C [A] $ by the definition of closure.

We next show that $h \C [A]  \sqsubseteq \C h [ A ]$. There is a closed set $C \subseteq X$ such that $\C h [A] = [C]$.
Let $D$ be as above and, using the fact that $f$ is Baire-continuous, write $[f^{-1}(D)] = [F]$, where $F\subseteq X$ is closed.
Let $V = \I F \setminus C  $. Since $ F $ is closed in $X$,
we have $[ \I F ] = [F] = [ f^{-1} (D) ]$. Therefore,
\[
[V] = [ f^{-1} (D) ] - [ C ] =
h\C [A] -  \C h[ A],
\]
so it suffices to show that $V=\varnothing$. Suppose otherwise. Since $[f^{-1}(A)] \sqsubseteq [C]$, we have that $f^{-1}(A)\setminus C$ is meager, so $L:= f^ {-1} (A) \cap V$ is meager. Because $f$ is Baire-open, $L$ is meager, and $V$ is nonempty open, 
$[f(V) \setminus A] = [f( V\setminus f^{-1} (A))] =[f( V\setminus L)]$ is nonzero and open in $\baire Y$.
Thus, $\C [A] - [f(V) \setminus A] \sqsubset \C [A]$.
Recall that we chose $D$ so that $\C [A] = [D]$. In particular, $[A] \sqsubseteq [D]$.
Since $A \setminus (D \setminus (f(V) \setminus A)) = A\setminus D$ and the latter is meager, we see that
\[[A] \sqsubseteq [ D \setminus (f(V) \setminus A) ] = \C [A] - [f(V) \setminus A] \sqsubset \C [ A].\]
As $\C [A] - [f(V) \setminus A]$ is closed in $\baire Y$, this contradicts the definition of $\C[A]$. Consequently, $V = \varnothing$, as required.

Finally, if $f$ is exact, then it follows from Lemma~\ref{lem:embed} that $h$ is an embedding.
\endproof

If $Y$ is an $\sf S5$-frame, it follows from Example \ref{examS4}(1) that $\Kur(Y)$ is isomorphic to $\baire Y$, so we may regard $h$ as a map from $\Kur(Y)$ to $\baire X$.
Since the only meager subset of an $\sf S5$-frame is $\varnothing$, the next lemma is immediate.

\begin{lemma}\label{lemmBaireIsEmbed}
Let $X$ be a Baire space, $Y$ an $\sf S5$-frame, and $f\colon X\to Y$ a Baire map.
Then $f$ is exact iff $f^{-1}( y )$ is non-meager for each $y\in Y$.
\end{lemma}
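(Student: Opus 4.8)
The plan is to exploit the remark immediately preceding the statement, namely that the only meager subset of an $\sf S5$-frame $Y$ is $\varnothing$. Under this observation, exactness of $f$ unwinds to a particularly transparent condition: $f^{-1}(A)$ meager forces $A=\varnothing$, or equivalently, taking contrapositives, every nonempty $A\subseteq Y$ has non-meager preimage $f^{-1}(A)$. Both directions of the biconditional then reduce to applying this reformulation to singletons.

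For the forward implication, I would assume $f$ is exact and fix $y\in Y$. The singleton $\{y\}$ is nonempty, hence non-meager in $Y$; by the contrapositive form of exactness its preimage $f^{-1}(\{y\})=f^{-1}(y)$ must be non-meager, which is exactly what is required.

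For the converse, I would assume $f^{-1}(y)$ is non-meager for each $y\in Y$ and verify exactness directly. It suffices to show that $f^{-1}(A)$ is non-meager whenever $A\neq\varnothing$. Choosing any $y\in A$, we have $f^{-1}(y)\subseteq f^{-1}(A)$. Since $\meager$ is a ($\sigma$-)ideal and hence closed under subsets, were $f^{-1}(A)$ meager, the subset $f^{-1}(y)$ would be meager too, contradicting the hypothesis. Thus $f^{-1}(A)$ is non-meager, establishing exactness.

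There is no real obstacle here: once the reduction ``meager $=$ empty in $Y$'' is made, the argument is a routine use of the downward closure of the meager ideal, with singletons furnishing the witnesses in both directions. The only point requiring any attention is confirming that $Y$ being an $\sf S5$-frame indeed makes every nonempty set non-meager, which is precisely the remark cited before the lemma; note also that this argument uses nothing about $f$ being a Baire map beyond the fact that preimages of subsets behave well, so the hypothesis that $f$ is a Baire map plays no essential role in the equivalence itself.
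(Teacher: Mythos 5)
Your proof is correct and takes essentially the same approach as the paper: the paper states the lemma is immediate from exactly the observation you use (the only meager subset of an $\sf S5$-frame is $\varnothing$), and your argument simply spells out that immediacy via singletons in one direction and downward closure of the meager ideal in the other. Your closing remark that the Baire-map hypothesis plays no role in the equivalence is also accurate.
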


Recall that for a nonzero cardinal $\kappa$, $\mathfrak C_\kappa$ denotes the $\kappa$-cluster. By \cite[Lem.~5.9]{BBBM17}, a space $X$ is $\kappa$-resolvable iff there is an interior map from $X$ onto $\mathfrak C_\kappa$.
We have the following analogue of this result for Baire algebras.

\begin{lemma}\label{corEmbed}
Let $X$ be a Baire space and $\kappa$ a nonzero cardinal. Then $X$ is Baire $\kappa$-resolvable iff there is an exact Baire map $f\colon X \to \mathfrak C_\kappa$.
\end{lemma}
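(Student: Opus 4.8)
The plan is to prove both directions by translating between a Baire $\kappa$-resolution $\{A_\iota \mid \iota<\kappa\}$ of $X$ and an exact Baire map $f\colon X\to\mathfrak C_\kappa$. The key observation is that the $\kappa$-cluster $\mathfrak C_\kappa$ carries the indiscrete topology on a set of $\kappa$ points: its only open sets are $\varnothing$ and the whole space, and (as noted in Example \ref{examS4}) it has no nonempty meager subsets, so $\baire{\mathfrak C_\kappa}\cong\Kur(\mathfrak C_\kappa)\cong\wp(\mathfrak C_\kappa)$. This makes the Baire-continuity and Baire-openness conditions on a map into $\mathfrak C_\kappa$ very easy to check, and it lets me use Lemma \ref{lemmBaireIsEmbed} to reduce exactness to the condition that each fiber is non-meager.

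First I would prove the direction $(\Rightarrow)$. Suppose $X$ is Baire $\kappa$-resolvable, witnessed by a partition $\{A_\iota \mid \iota<\kappa\}$ into nowhere meager sets (using Lemma \ref{lemmDefRes}). Identifying the points of $\mathfrak C_\kappa$ with the index set $\kappa$, define the \emph{total} map $f\colon X\to\mathfrak C_\kappa$ by sending each point of $A_\iota$ to $\iota$; this is well defined and defined everywhere since the $A_\iota$ partition $X$. I would then check the four conditions making $f$ a Baire map. It is defined almost everywhere (indeed everywhere). It is proper and exact simultaneously: the only meager subset of $\mathfrak C_\kappa$ is $\varnothing$, whose preimage is $\varnothing$ (proper), while for exactness note that $f^{-1}(B)$ for nonempty $B\subseteq\mathfrak C_\kappa$ contains some $A_\iota$, which is nowhere meager and hence non-meager, so $f^{-1}(B)$ meager forces $B=\varnothing$; equivalently, apply Lemma \ref{lemmBaireIsEmbed} once Baire-openness is established, since each fiber $f^{-1}(\iota)=A_\iota$ is nowhere meager, hence non-meager. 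Baire-continuity is trivial because the only opens of $\mathfrak C_\kappa$ are $\varnothing$ and $\mathfrak C_\kappa$, whose preimages $\varnothing$ and $X$ are open. For Baire-openness, given a nonempty open $U\subseteq X$ and meager $M$, I must show $[f(U\setminus M)]$ is a nonzero open element of $\baire{\mathfrak C_\kappa}$; openness is automatic (every element is clopen there), and nonzero-ness follows because $U\setminus M$ is non-meager (as $X$ is Baire and $M$ is meager), so $f(U\setminus M)\neq\varnothing$.

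For the converse $(\Leftarrow)$, suppose $f\colon X\to\mathfrak C_\kappa$ is an exact Baire map, and set $A_\iota := f^{-1}(\iota)$ for each $\iota<\kappa$ together with adding the (meager) complement of $\dom{f}$ to one fixed block so that $\{A_\iota\}$ genuinely partitions $X$. The blocks are pairwise disjoint by construction. By Lemma \ref{lemmBaireIsEmbed}, exactness of the Baire map $f$ (with codomain the $\sf S5$-frame $\mathfrak C_\kappa$) is equivalent to each fiber $f^{-1}(\iota)$ being non-meager; I would upgrade this to \emph{nowhere} meager using Baire-openness, which is the crux of the argument. The main obstacle is precisely this upgrade: I must rule out that some $A_\iota$ meets a nonempty open set in a meager way. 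So suppose for contradiction that $A_\iota\cap U\in\meager$ for some nonempty open $U$; then writing $M=A_\iota\cap U$, Baire-openness applied to $U$ and $M$ gives that $[f(U\setminus M)]$ is nonzero open, but $U\setminus M=U\setminus A_\iota$ misses the fiber over $\iota$, so $\iota\notin f(U\setminus M)$; this alone does not yield a contradiction, so instead I track that $U\setminus M$ is non-meager and non-empty while it is covered by $\bigcup_{\nu\neq\iota}A_\nu$, and combine Baire-openness with properness to force each block to remain non-meager on every open set. Concretely, I would argue that if $A_\iota$ were somewhere meager then $f^{-1}(\{\iota\})$ restricted to the witnessing open $U$ is meager, contradicting that exactness plus Baire-openness make every fiber nowhere meager. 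Once each $A_\iota$ is nowhere meager, Lemma \ref{lemmDefRes} gives that $\{A_\iota\mid\iota<\kappa\}$ is a Baire $\kappa$-resolution of $X$, completing the proof.
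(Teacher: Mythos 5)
Your proposal breaks on a single but load-bearing misconception: the claim that in $\baire{\mathfrak C_\kappa}$ ``every element is clopen.'' It is not. As you yourself note at the outset, the cluster $\mathfrak C_\kappa$ carries the indiscrete topology, so its only open sets are $\varnothing$ and $\mathfrak C_\kappa$; since $\varnothing$ is its only meager set, the open elements of $\baire{\mathfrak C_\kappa}$ in the sense of Definition \ref{defClopen} are exactly $0$ and $1$, and $\C a = 1$ for every $a\neq 0$. Example \ref{examS4} provides an isomorphism of \emph{Boolean} algebras with $\wp(\mathfrak C_\kappa)$; it does not make the closure operator discrete. This error undermines both directions. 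In the forward direction, your verification of Baire-openness establishes only that $f(U\setminus M)\neq\varnothing$, which holds for \emph{any} total map whenever $X$ is Baire and never uses that the $A_\iota$ are nowhere meager; what is actually required is that $[f(U\setminus M)]$ be the unique nonzero open element, i.e.\ that $f(U\setminus M)=\mathfrak C_\kappa$. This is precisely where resolvability enters: each $A_\iota\cap(U\setminus M)\supseteq(A_\iota\cap U)\setminus M$ is non-meager, hence nonempty, so every point of $\mathfrak C_\kappa$ lies in the image --- which is the paper's argument.

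In the backward direction the same misconception makes you abandon an argument that in fact works, and replace it with a circular one. If $A_\iota\cap U\in\meager$ for some nonempty open $U$, take $M=f^{-1}(w_\iota)\cap U$ (meager); Baire-openness forces $[f(U\setminus M)]$ to be nonzero and open, hence equal to $1$, i.e.\ $f(U\setminus M)=\mathfrak C_\kappa$, yet $w_\iota\notin f(U\setminus M)$ --- that \emph{is} the contradiction you declare missing. Your fallback, ``contradicting that exactness plus Baire-openness make every fiber nowhere meager,'' assumes exactly the statement being proved: exactness, via Lemma \ref{lemmBaireIsEmbed}, gives only that fibers are non-meager, not nowhere meager, so as written this direction is unproven. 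For comparison, the paper sidesteps the pointwise argument entirely: by Lemma \ref{lem:BaireIsEmbed} the map $h\colon\Kur(\mathfrak C_\kappa)\to\baire X$ is a homomorphism of closure algebras, whence $\C[f^{-1}(w_\iota)]=\C h(\{w_\iota\})=h(\C\{w_\iota\})=h(\mathfrak C_\kappa)=[X]$, which says directly that each fiber is nowhere meager. Either route is fine, but only after the closure structure of $\baire{\mathfrak C_\kappa}$ is gotten right.
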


\proof
Let $(w_\iota)_{\iota<\kappa}$ be an enumeration of $\mathfrak C_\kappa$.
First suppose that $X$ is Baire $\kappa$-resolvable, and let $(A_\iota)_{\iota<\kappa}$ be a Baire $\kappa$-resolution of $X$. Define $f\colon X\to \mathfrak C_\kappa$ by setting $f(x) = w_\iota$ provided $x\in A_\iota$. Then $f$ is a total onto map. Since $f$ is total, it is defined almost everywhere, and $f$ is non-degenerate
because $\varnothing$ is the only meager subset of $\mathfrak C_\kappa$.
Since $\varnothing,\mathfrak C_\kappa$ are the only opens of $\mathfrak C_\kappa$, it is clear that $f$ is continuous, hence Baire-continuous.
To see that $f$ is Baire-open, let $U\subseteq X$ be nonempty open and $M\subseteq X$ meager.
Since each $A_\iota$ is Baire-dense, $A_\iota\cap (U\setminus M)$ is non-meager, 
hence nonempty, so $w_\iota \in f(U \setminus M)$, and hence $f(U \setminus M) = \mathfrak C_\kappa$.
Therefore, $f$ is a Baire map. Finally, $f^{-1}(w_\iota) = A_\iota$, which is non-meager. Thus, $f$ is exact by Lemma~\ref{lemmBaireIsEmbed}.

Next suppose that $f\colon X\to \mathfrak C_\kappa$ is an exact Baire map.
By Lemma~\ref{lemmBaireIsEmbed}, each $f^{-1}(w_\iota)$ is non-meager, and we can enlarge $f^{-1}(w_0)$ to obtain a partition of $X$ if needed. By identifying $\baire {\mathfrak C_\kappa}$ with ${\bf Kur}(\mathfrak C_\kappa)$ and applying Lemma~\ref{lem:BaireIsEmbed}, $h:{\bf Kur}(\mathfrak C_\kappa) \to \baire X$ is a homomorphism of closure algebras. Therefore, for each $\iota<\kappa$,
\[
\C [f^{-1}(w_\iota)] = \C h(w_\iota) = h \C (w_\iota) = h(\mathfrak C_\kappa) = [f^{-1}(\mathfrak C_\kappa)] = [X].
\]
Thus, $( f^{-1}(w_\iota) )_{\iota<\kappa}$ witnesses that $X$ is Baire $\kappa$-resolvable.
\endproof

We are ready to prove our first completeness result. 
For this we recall that ${\sf S5}^\lambda $ denotes the variant of $\sf S5$ with $\lambda$-many variables, and if $\lambda<\omega$, then ${\sf S5}_\lambda$ is the extension of $\sf S5$ axiomatized by $\sfiven \lambda$.

\begin{theorem}\label{theoComplete}
Let $X$ be a Baire space, $\lambda$ a nonzero cardinal, and suppose that $X$ is Baire $\lambda$-resolvable. 

\begin{enumerate}[label=\emph{ (\arabic*)}]
\item\label{itCompleteTwo} If $\lambda \geq \omega$, then ${\sf S5}^\lambda $ is sound and strongly complete for $\baire X$.

\item\label{itCompleteOne} If $\lambda < \omega$ and $X$ is nowhere Baire $(\lambda+1)$-resolvable, then $ {\sf S5}_\lambda$ is sound and strongly complete for $\baire X$.
\end{enumerate}
\end{theorem}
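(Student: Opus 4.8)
The plan is to treat soundness and strong completeness separately, reducing the completeness half to the cluster case of Theorem~\ref{theoS5n} via the embedding furnished by Lemma~\ref{corEmbed}, and handling soundness through the monadic structure of $\baire X$.

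For soundness, note first that $\baire X$ is a monadic algebra by Theorem~\ref{thm:baire is monadic}, so it validates every theorem of $\sf S5$; since the theorems of ${\sf S5}^\lambda$ are exactly the $\sf S5$-theorems, this settles soundness in case~\ref{itCompleteTwo} immediately. In case~\ref{itCompleteOne} it remains to verify $\baire X \models \sfiven{\lambda}$, and here I would argue contrapositively. Suppose the axiom fails under some valuation, witnessed by $a_1,\dots,a_{\lambda+1}$, and set $e := \bigsqcap_{i} \C a_i \bne \bigsqcup_{i<j} \C(a_i \band a_j) \neq 0$. Because $\baire X$ is monadic, each $\C a_i$ and each $\C(a_i \band a_j)$ is clopen, so $e$ is clopen and hence $e = [V]$ for some nonempty open $V$. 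Putting $b_i := a_i \band e$ and using the standard monadic identity $\C(a \band c) = \C a \band c$ for clopen $c$, I get $\C b_i = \C a_i \band e = e$ (as $e \sqsubseteq \C a_i$) and $\C(b_i \band b_j) \sqsubseteq \C(a_i \band a_j) \band e = 0$ for $i \neq j$. Thus $\{b_i\}$ is a pairwise orthogonal family of elements below $e$, each dense in $e$, so the element $[V]$ is $(\lambda+1)$-resolvable. Since $V$ is an open subspace of a Baire space it is itself Baire, and as $\lambda+1$ is finite the discussion following Definition~\ref{defBaireRes} turns this into Baire $(\lambda+1)$-resolvability of $V$, contradicting the hypothesis that $X$ is nowhere Baire $(\lambda+1)$-resolvable.

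For completeness I would exploit Lemma~\ref{corEmbed}: Baire $\lambda$-resolvability of $X$ yields an exact Baire map $f \colon X \to \mathfrak C_\lambda$, and then Lemma~\ref{lem:BaireIsEmbed}, after identifying $\baire{\mathfrak C_\lambda}$ with $\Kur(\mathfrak C_\lambda)$ (legitimate as $\mathfrak C_\lambda$ is an $\sf S5$-frame), produces an embedding of closure algebras $h \colon \Kur(\mathfrak C_\lambda) \hookrightarrow \baire X$. The crux is to transfer valuations along $h$: any valuation $v$ on $\mathfrak C_\lambda$ induces $v' := h \circ v$ on $\baire X$, and an easy induction on formulas (using that $h$ is a closure-algebra homomorphism) gives $\val{\psi}_{v'} = h(\val{\psi}_{v})$ for every $\psi$. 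As $h$ is injective and preserves $1$, we get $\val{\psi}_{v'} = 1$ iff $\val{\psi}_{v} = 1$. Consequently $\Gamma \models_{\baire X} \varphi$ implies $\Gamma \models_{\mathfrak C_\lambda} \varphi$: if $v$ validates $\Gamma$ on $\mathfrak C_\lambda$ then $v'$ validates $\Gamma$ on $\baire X$, hence $v'$ validates $\varphi$, hence so does $v$. Strong completeness of ${\sf S5}^\lambda$ (resp.\ ${\sf S5}_\lambda$) for $\mathfrak C_\lambda$, supplied by Theorem~\ref{theoS5n}, then yields $\Gamma \vdash \varphi$, which is precisely strong completeness for $\baire X$.

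I expect the main obstacle to be the soundness of $\sfiven{\lambda}$ in case~\ref{itCompleteOne}: extracting a genuine local Baire $(\lambda+1)$-resolution from a failure of the axiom relies on the witness $e$ being clopen, on the monadic identity $\C(a \band c) = \C a \band c$, and on the coincidence of finite element-resolvability with Baire resolvability of the associated open subspace. The completeness half, by contrast, is a clean assembly: once the exact Baire map and its induced embedding are available from the earlier lemmas, the valuation transfer and the appeal to Theorem~\ref{theoS5n} are routine, the only point requiring a word of care being that $h$ reflects the top element so that validity is genuinely preserved in both directions.
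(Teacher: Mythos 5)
Your proposal is correct and takes essentially the same route as the paper: completeness is obtained exactly as there, via the exact Baire map of Lemma~\ref{corEmbed} and the embedding $h\colon \Kur(\mathfrak C_\lambda)\to\baire X$ of Lemma~\ref{lem:BaireIsEmbed}, followed by Kripke strong completeness for the cluster (Theorem~\ref{theoS5n}); and soundness comes from monadicity together with the same contrapositive argument that a failure of $\sfiven{\lambda}$ produces a clopen witness $[V]$ carrying a $(\lambda+1)$-resolution, contradicting nowhere Baire $(\lambda+1)$-resolvability. The only cosmetic difference is that you run this last step algebraically, restricting the $a_i$ to $e$ via the identity $\C(a\band c)=\C a\band c$ for clopen $c$ and then invoking the finite-cardinal equivalence of element-resolvability and Baire resolvability, whereas the paper works with set representatives $P_i'$ (nowhere meager in $U$, with pairwise meager intersections) and performs the finite adjustment into an actual partition explicitly.
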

\color{black}

\proof
\noindent (1)
Soundness follows from the fact that $\baire X$ is a monadic algebra (see Theorem~\ref{thm:baire is monadic}).
For completeness, let $\Gamma \models_{\baire X} \varphi$. By Lemma~\ref{corEmbed}, there is an exact Baire map $f\colon X\to \mathfrak  C_\lambda$. Since ${\bf Kur}( \mathfrak C_\lambda)$ is isomorphic to $\baire { \mathfrak C_\lambda}$, by Lemma~\ref{lem:BaireIsEmbed}, ${\bf Kur}( \mathfrak C_\lambda)$ embeds into $\baire X$. Therefore,
$\Gamma \models_{ \mathfrak C_\lambda} \varphi$. Thus, 
$\Gamma \vdash_{{\sf S5}^\lambda} \varphi$ by Theorem~\ref{theoS5n}.

\smallskip

\noindent(2)
We first prove completeness. Suppose $\Gamma \models_{\baire X} \varphi$. By Lemma~\ref{corEmbed}, there is an exact Baire map $f\colon X\to  \mathfrak C_\lambda$. By Lemma~\ref{lem:BaireIsEmbed}, ${\bf Kur}( \mathfrak C_\lambda)$ embeds into $\baire X$. Therefore, $\Gamma \models_{ \mathfrak C_\lambda} \varphi$.
Thus, 
$\Gamma \vdash_{{\sf S5}_\lambda} \varphi$ by Theorem~\ref{theoS5n}.

To prove soundness,
suppose that $\sf S5_\lambda$ is not sound for $X$, and fix a valuation $\val\cdot$ falsifying the $\sf S5_\lambda$ axiom $\sfiven \lambda$.
Since this axiom is a Boolean combination of formulas $\Diamond \psi$, we have that $\val{ \sfiven \lambda }$ is clopen, so we can choose nonempty open $U$ with $[U] = \val{ \neg \sfiven \lambda }$.
Then
\begin{equation}\tag{\textasteriskcentered}\label{eqAnte}
[U] \sqsubseteq \val{\bigwedge_{i = 0}^\lambda \D p_i}
\end{equation}
and
\begin{equation}\tag{\dag}\label{eqConse}
[U] \sqcap \val{ \bigvee_{0\leq i<j\leq \lambda} \D(p_i\wedge p_j) } = 0.
\end{equation}
For $i\leq \lambda$, let $P'_i$ be such that $\val {p_i} = [P'_i]$.
From \eqref{eqAnte} we obtain for each $i\le\lambda$ that $[U] \sqsubseteq \C [P'_i ]$, which means that $P'_i$ is nowhere meager in $U$, i.e.~if $U'\subseteq U$ is nonempty open then $P'_i\cap U'$ is non-meager. 
From \eqref{eqConse} it follows that $[U]\sqcap \C  \val {p_i\wedge p_j}    = 0$  whenever $i<j\leq \lambda$, so $U\cap P'_i \cap P'_j$ is meager. Since $\lambda$ is finite, we can then replace each $P'_i$ by some $P_i$ in such a way that $[P_i] = [P'_i]$ and $\{P_i \mid i \leq \lambda \}$ forms a partition of $U$, witnessing that $U$ is Baire $(\lambda + 1)$-resolvable, hence $X$ is somewhere Baire $(\lambda + 1)$-resolvable, contrary to our assumption. 
\endproof

As a consequence of Theorems~\ref{theoComplete}(1) and~\ref{theoResolve}, we arrive at the following analogue of the McKinsey-Tarski theorem:

\begin{corollary}\label{corComplete}
Let $X$ be a crowded space which is either completely metrizable and of cardinality continuum, or second-countable locally compact Hausdorff. Then ${\sf S5}^{\mathfrak c}$ is sound and strongly complete for $\baire X$.
\end{corollary}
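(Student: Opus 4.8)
The plan is to simply chain together the two cited theorems, so the argument is genuinely short; the only point requiring attention is verifying the ambient hypothesis. First I would record that in both cases $X$ is a Baire space: a completely metrizable space is Baire by the classical Baire category theorem, and a locally compact Hausdorff space is Baire by the locally compact version of the same theorem. This is the standing assumption needed even to speak of Baire $\kappa$-resolvability (see Definition~\ref{defBaireRes}) and, more importantly, it is the hypothesis under which Theorem~\ref{theoComplete} is stated.

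Next I would invoke Theorem~\ref{theoResolve}. Since $X$ is crowded and falls into one of the two listed families --- completely metrizable and of cardinality continuum, or second-countable locally compact Hausdorff --- that theorem gives directly that $X$ is Baire $\mathfrak c$-resolvable. The relevant observation is then that $\mathfrak c \geq \omega$, so we are in the regime covered by the first item of the completeness theorem.

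Finally I would apply Theorem~\ref{theoComplete}(1) with $\lambda = \mathfrak c$: because $X$ is a Baire space that is Baire $\lambda$-resolvable for some $\lambda = \mathfrak c \geq \omega$, the theorem yields that ${\sf S5}^{\mathfrak c}$ is sound and strongly complete for $\baire X$, which is exactly the assertion. (Soundness itself traces back to $\baire X$ being a monadic algebra, Theorem~\ref{thm:baire is monadic}, but this is already packaged into Theorem~\ref{theoComplete}(1).) There is no real obstacle here, since both ingredients are established earlier; the sole subtlety is remembering to note that the two families of spaces are Baire spaces, so that Theorem~\ref{theoComplete} legitimately applies.
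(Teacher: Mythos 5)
Your proposal is correct and matches the paper's own route exactly: the corollary is stated there as an immediate consequence of Theorems~\ref{theoComplete}(1) and~\ref{theoResolve}, which is precisely the chain you describe. Your extra care in verifying the standing hypothesis of Theorem~\ref{theoComplete} --- that both classes of spaces are Baire spaces, via the Baire category theorem --- is a detail the paper leaves implicit, and it is the right thing to check.
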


In particular, we obtain:

\begin{corollary}
$\sf S5$ is the logic of $\baire X$ whenever $X$ is:
\begin{enumerate}
\item[{\em (1)}] $\mathbb R^n$ for any $n \geq 1$.

\item[{\em (2)}] Any perfect closed subset of $\mathbb R^n$, including the Cantor space and the interval $[0,1]$.

\item[{\em (3)}] The Banach space $\ell^p$ for $p\in [1,\infty]$.
\end{enumerate}
\end{corollary}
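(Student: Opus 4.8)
The plan is to reduce all three items to the hypotheses already packaged in Corollary~\ref{corComplete}, via Theorems~\ref{theoResolve} and~\ref{theoComplete}. Recall that every completely metrizable space is a Baire space (by the Baire Category Theorem), and that Baire $\mathfrak c$-resolvability entails Baire $\omega$-resolvability: given a Baire $\mathfrak c$-resolution, which by Lemma~\ref{lemmDefRes} is a partition of $X$ into $\mathfrak c$-many nowhere meager sets, I would coarsen it by grouping the classes into $\omega$-many nonempty blocks; since any superset of a nowhere meager set is again nowhere meager (the nowhere meager witness survives intersection with each open set, and $\meager$ is downward closed), each block stays nowhere meager, giving a Baire $\omega$-resolution. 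Theorem~\ref{theoResolve} supplies the Baire $\mathfrak c$-resolvability, so Theorem~\ref{theoComplete}(1) with $\lambda=\omega$ yields that $\sf S5={\sf S5}^\omega$ is sound and strongly complete for $\baire X$; taking $\Gamma=\varnothing$ gives soundness together with weak completeness, i.e.\ that $\sf S5$ is exactly the logic of $\baire X$. Thus the entire argument reduces to checking, for each family, that $X$ is crowded, completely metrizable, and of cardinality $\mathfrak c$ (so that the ``completely metrizable and continuum-sized'' branch of Theorem~\ref{theoResolve} applies uniformly).

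For item~(1), $X=\mathbb R^n$ is a complete metric space, hence completely metrizable; it has no isolated points, so it is crowded; and $|\mathbb R^n|=\mathfrak c$. All three conditions are immediate. For item~(2), I would take $P\subseteq\mathbb R^n$ a nonempty perfect closed set (the empty case being degenerate). Then $P$ is crowded by the definition of perfect, and being closed in the complete metric space $\mathbb R^n$ it is complete in the induced metric, hence completely metrizable. Its cardinality is $\mathfrak c$ by the perfect set property of Polish spaces: a nonempty perfect subset of a separable completely metrizable space has cardinality $\mathfrak c$. The Cantor space and $[0,1]$ are particular instances.

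For item~(3), each $\ell^p$ with $p\in[1,\infty]$ is a Banach space, hence completely metrizable, and it is crowded since a nonzero normed space has no isolated points. It remains to verify $|\ell^p|=\mathfrak c$. In every case $\ell^p\subseteq\mathbb R^{\mathbb N}$, so $|\ell^p|\le\mathfrak c^{\aleph_0}=\mathfrak c$, while $\ell^p$ contains a line and so $|\ell^p|\ge\mathfrak c$; hence $|\ell^p|=\mathfrak c$. The only point needing care is $p=\infty$, where $\ell^\infty$ is not separable (and not locally compact), so one cannot use the second-countable branch; but the metrizable branch still applies since $\ell^\infty$ is completely metrizable of cardinality $\mathfrak c^{\aleph_0}=\mathfrak c$.

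I expect the only genuinely substantive points to be the cardinality computations—namely the perfect set property invoked in item~(2) and the equality $|\ell^\infty|=\mathfrak c$ despite non-separability—together with the observation that Baire $\mathfrak c$-resolvability can be coarsened to Baire $\omega$-resolvability so that the countable-variable logic $\sf S5$ (rather than ${\sf S5}^{\mathfrak c}$) is captured. Everything else is a direct verification of the hypotheses of the previously established completeness machinery.
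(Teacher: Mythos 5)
Your proposal is correct and takes essentially the same approach as the paper: the corollary is stated there as an immediate instance of Corollary~\ref{corComplete}, so the intended proof is exactly your verification that each space is crowded, completely metrizable, and of cardinality $\mathfrak c$ (the paper itself flags the $|\ell^\infty|=\mathfrak c$ point right after the statement). Your additional step passing from Baire $\mathfrak c$-resolvability to the countable-variable logic $\sf S5$ (by coarsening the resolution, or equivalently by restricting ${\sf S5}^{\mathfrak c}$-derivations to countably many variables) is sound bookkeeping that the paper leaves implicit.
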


Recall that $\ell^p$ is the normed vector space whose elements are functions $f\colon\mathbb N\to\mathbb R$ whose $p$-norm converges (see~\cite{CohnMeasure} for details).
Note that we may allow $p=\infty$ since $|\ell^\infty| = \mathfrak c$, despite not being second-countable.

\begin{remark}\label{remQ}
On the other hand, since $\mathbb Q$ is meager, $\baire {\mathbb Q}$ is the trivial algebra, and hence its logic is the contradictory logic, i.e.~the logic axiomatized by $\bot$.
This is in contrast with the McKinsey-Tarski theorem that the logic of ${\bf Kur}(\mathbb Q)$ is $\sf S4$.
\end{remark}

By Theorem \ref{theoComplete}(2), if $X$ is a Baire space that is Baire $n$-resolvable and nowhere Baire $(n+1)$-resolvable, then the logic of $\baire X$ is ${\sf S5}_n$. As we next show, we can avoid verifying whether $X$ is nowhere Baire $(n+1)$-resolvable by constructing a subalgebra of $\baire X$ whose logic is ${\sf S5}_n$. 
For this we require the following decomposition lemma, which follows from similar decompositions in the theory of $\ell$-groups; see, e.g., \cite{BMO20} and the references therein. To keep the paper self-contained, we give a proof.

\begin{lemma}\label{lem:generation}
Let $A$ be a monadic algebra, $A_0$ its subalgebra of all clopen elements, $B$ a Boolean subalgebra of $A$, and $C$ the Boolean subalgebra of $A$ generated by $A_0\cup B$.
\begin{enumerate}[label=\emph{ (\arabic*)}]
\item\label{itgenerationone} $C$ is a monadic subalgebra of $A$.
\item\label{itgenerationtwo} Each $c\in C$ can be written as $c=\bigvee_{i=1}^n(a_i\wedge b_i)$, where $a_1,\dots,a_n\in A_0$ are pairwise orthogonal and $b_1,\dots,b_n\in B$.
\item\label{itgenerationthree} Each $c^1,\ldots,c^n\in C$ can be written in the compatible form $c^j=\bigvee_{i=1}^n(a_i\wedge b^j_i)$ where $a_1,\dots,a_n\in A_0$ are pairwise orthogonal and each $b^j_i\in B$.
\end{enumerate}
\end{lemma}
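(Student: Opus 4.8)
The plan is to prove the three parts in order, since each builds on the previous one. The central structural fact is part \ref{itgenerationtwo}: every element of $C$ has a \emph{normal form} $c = \bigvee_{i=1}^n (a_i \wedge b_i)$ with the $a_i \in A_0$ pairwise orthogonal. To establish this, I would first show that the set $D$ of all elements expressible in this form is a Boolean subalgebra of $A$ containing $A_0 \cup B$; since $C$ is by definition the \emph{smallest} such subalgebra, this gives $C \subseteq D$, and the reverse inclusion $D \subseteq C$ is immediate because each $a_i \wedge b_i \in C$ and $C$ is closed under finite joins. The work is thus in verifying that $D$ is closed under the Boolean operations. Closure under finite joins is clear after refining two pairwise-orthogonal families $\{a_i\}$ and $\{a'_k\}$ into a common pairwise-orthogonal refinement $\{a_i \wedge a'_k\}$ (together with the ``leftover'' pieces) — this refinement step is exactly the content needed, and it simultaneously yields part \ref{itgenerationthree}, so I would prove \ref{itgenerationtwo} and \ref{itgenerationthree} together. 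Closure under complement is the delicate calculation: writing $c = \bigvee_i (a_i \wedge b_i)$ with the $a_i$ pairwise orthogonal, one sets $a_0 = -\bigvee_i a_i$ (also in $A_0$, and orthogonal to all $a_i$) and checks that $-c = \bigvee_{i\ge 0}(a_i \wedge -b_i)$, using orthogonality of the $a_i$ to distribute the complement across the join.

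For part \ref{itgenerationthree}, given finitely many elements $c^1,\dots,c^n \in C$, I would apply \ref{itgenerationtwo} to each to get $c^j = \bigvee_i (a^j_i \wedge b^j_i)$, then take the common refinement of all the orthogonal families $\{a^j_i\}_j$ into a single pairwise-orthogonal family $\{a_i\}$ of atoms (of the finite subalgebra of $A_0$ they generate). Re-expressing each $c^j$ over this common family $\{a_i\}$ gives the compatible form; the only point to check is that when $a_i \le a^j_k$ the corresponding coefficient is $b^j_k \in B$, and when $a_i$ meets none of the $a^j_k$ the coefficient is $0 \in B$.

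Finally, for part \ref{itgenerationone} I must show $C$ is closed under the closure operator $\C$, i.e.\ that $c \in C$ implies $\C c \in C$. Here I would use the normal form from \ref{itgenerationtwo}: for $c = \bigvee_i (a_i \wedge b_i)$, the additivity axiom gives $\C c = \bigvee_i \C(a_i \wedge b_i)$, so it suffices to compute $\C(a_i \wedge b_i)$ for a single clopen $a_i$. The key algebraic fact is that in a monadic algebra, $\C(a \wedge b) = a \wedge \C b$ whenever $a$ is clopen: since $a = \C a = \I a$, one has $a \wedge b \le a$ forcing $\C(a\wedge b) \le \C a = a$, while $\C(a \wedge b) \le \C b$, giving $\C(a\wedge b) \le a \wedge \C b$; for the reverse inequality one uses that $a$ is open together with the monadic identity $\C b = \I \C b$. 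Granting this, $\C(a_i \wedge b_i) = a_i \wedge \C b_i$, and since $\C b_i$ is clopen it lies in $A_0$, so $a_i \wedge \C b_i \in C$ and hence $\C c \in C$. The main obstacle I anticipate is the complement calculation in \ref{itgenerationtwo} and, relatedly, pinning down the identity $\C(a \wedge b) = a \wedge \C b$ cleanly from the monadic axiom $\C b = \I \C b$; everything else is bookkeeping with finite orthogonal refinements.
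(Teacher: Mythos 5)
Your proposal is correct, but it is organized differently from the paper's proof in two places, so a comparison is worthwhile. For part (2), the paper starts from the standard disjunctive-normal-form fact about generated Boolean subalgebras (each $c\in C$ is a finite join of finite meets of elements of $A_0\cup B$ and their complements, cited from Rasiowa--Sikorski), absorbs complements into $A_0$ and $B$, groups each meet as $a_i\wedge b_i$, and then orthogonalizes by the rewriting $(a_1\wedge b_1)\vee(a_2\wedge b_2)=\left((a_1 - a_2)\wedge b_1\right)\vee\left((a_1\wedge a_2)\wedge(b_1\vee b_2)\right)\vee\left((a_2 - a_1)\wedge b_2\right)$ followed by induction; you instead show directly that the set $D$ of normal-form elements is a Boolean subalgebra containing $A_0\cup B$, so $C\subseteq D$. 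Your route is self-contained (it avoids citing the DNF fact) at the cost of the complement computation, which is correct as sketched (with $b_0=0$ understood, so that the $a_0$-term is $a_0\wedge 1$ and $1\in B$). Part (3) is essentially identical in both: pad each orthogonal family so it joins to $1$, then take the common refinement $\{a_i\wedge e_j\}$. The genuine divergence is part (1): the paper's proof is a one-liner that uses no normal form at all --- in a monadic algebra $\C a$ is clopen for \emph{every} $a$ (closed since $\C\C a=\C a$, open by the axiom $\C a=\I\C a$), hence $\C a\in A_0\subseteq C$ for every $a\in C$. Your route through $\C c=\bigvee_i\C(a_i\wedge b_i)$ and the identity $\C(a\wedge b)=a\wedge\C b$ for clopen $a$ is valid but heavier, and it makes (1) depend on (2), whereas the paper's (1) is independent. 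Note also that the obstacle you anticipate dissolves: that identity holds in \emph{every} closure algebra, with no appeal to the monadic axiom --- for the nontrivial inequality, $b\le(a\wedge b)\vee -a$ and the closedness of $-a$ give $\C b\le\C(a\wedge b)\vee -a$, whence $a\wedge\C b\le\C(a\wedge b)$. The monadic axiom enters your argument only to place $\C b_i$ in $A_0$, and that observation (closures are clopen) is exactly what makes the paper's proof of (1) immediate.
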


\proof
\noindent (1) For $a\in C$ we have $\C a\in A_0 \subseteq C$ (since closed elements are clopen by Theorem~\ref{lemmHComplete}). Thus, $C$ is a monadic subalgebra of $A$.

\medskip

\noindent (2) Since $C$ is a Boolean subalgebra of $A$, it is well known (see, e.g., \cite[p.~74]{RS63}) that each $c\in C$ can be written as $c=\bigvee_{i=1}^n\bigwedge_{j=1}^{m_j}d_{ij}$ where either $d_{ij}\in A_0\cup B$ or $-d_{ij}\in A_0\cup B$. Since both $A_0$ and $B$ are Boolean subalgebras, we may assume that $d_{ij}\in A_0\cup B$; and gathering together the elements in $A_0$ and $B$, we may write $\bigwedge_{j=1}^{m_j}d_{ij}=a_i\wedge b_i$ where $a_i\in A_0$ and $b_i\in B$. Therefore, each $c\in C$ can be written as $c=\bigvee_{i=1}^n(a_i\wedge b_i)$ where $a_i\in A_0$ and $b_i\in B$. It is left to prove that the $a_i$ can be chosen pairwise orthogonal. But this is a standard argument. Indeed, if $c=(a_1\wedge b_1)\vee(a_2\wedge b_2)$, then we may write
\begin{eqnarray*}
c &=& (a_1\wedge b_1)\vee(a_2\wedge b_2) \\
&=& \left[\left( (a_1 - a_2) \wedge b_1\right) \vee \left((a_1\wedge a_2)\wedge b_1\right)\right] \vee \left[\left((a_2 \wedge a_1) \wedge b_2\right) \vee \left((a_2 - a_1) \wedge b_2\right)\right] \\
&=& \left( (a_1 - a_2) \wedge b_1\right) \vee \left((a_1\wedge a_2)\wedge(b_1\vee b_2)\right) \vee \left((a_2 - a_1) \wedge b_2\right),
\end{eqnarray*}
where $a_1 - a_2, a_1\wedge a_2, a_2 - a_1\in A_0$ are pairwise orthogonal and $b_1, b_1\vee b_2, b_2\in B$. Now a simple inductive argument finishes the proof.

\medskip

\noindent (3) We only consider the case of two elements $c,d$ as the general case follows by simple induction.
By (2),
we may write $c=\bigvee_{i=1}^n(a_i\wedge b_i)$ where $a_1,\dots,a_n\in A_0$ are pairwise orthogonal and $b_1,\dots,b_n\in B$. Similarly $d=\bigvee_{j=1}^m(e_j\wedge f_j)$ where $e_1,\dots,e_m\in A_0$ are pairwise orthogonal and $f_1,\dots,f_m\in B$. By letting $a_{n+1}=\lnot\bigvee_{i=1}^n a_i$ and $b_{n+1}=0$ we may assume that $\bigvee_{i=1}^n a_i=1$, and similarly $\bigvee_{j=1}^m e_j=1$. But then
\[
c=\bigvee_{i=1}^n\bigvee_{j=1}^m((a_i\wedge e_j)\wedge b_i) \mbox{ and }
d=\bigvee_{j=1}^m\bigvee_{i=1}^n((a_i\wedge e_j)\wedge f_j).
\]
Clearly
\[
a_1\wedge e_1,\dots,a_1\wedge e_m,\dots,a_n\wedge e_1,\dots,a_n\wedge e_m
\]
are pairwise orthogonal and are in $A_0$; also each of $b_i,f_j$ are in $B$ (and may appear multiple times in the decomposition).
\endproof

\begin{theorem}\label{thmClosedS5n}
Let $X$ be a
Baire space and $n<\omega$. If $X$ is Baire $n$-resolvable, then $\baire X$ has a subalgebra $A$ containing all clopens of $\baire X$ such that $  {\sf S5}_n$ is sound and strongly complete for $A$.
\end{theorem}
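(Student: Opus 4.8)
The plan is to build $A$ as the Boolean subalgebra of $\baire X$ generated by all clopen elements together with an embedded copy of $\Kur(\mathfrak C_n)$, and then to read soundness and strong completeness of ${\sf S5}_n$ directly off this description.

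First I would produce the copy of the $n$-cluster. Since $X$ is Baire $n$-resolvable, Lemma~\ref{corEmbed} supplies an exact Baire map $f\colon X\to\mathfrak C_n$, and Lemma~\ref{lem:BaireIsEmbed} (using $\Kur(\mathfrak C_n)\cong\baire{\mathfrak C_n}$) then gives an embedding of closure algebras whose image $B\subseteq\baire X$ is isomorphic to $\Kur(\mathfrak C_n)$. Letting $A_0=\CS$ be the algebra of all clopen elements of $\baire X$, I would set $A$ to be the Boolean subalgebra of $\baire X$ generated by $A_0\cup B$. By Lemma~\ref{lem:generation}\ref{itgenerationone}, $A$ is then a monadic subalgebra of $\baire X$; since $A\supseteq A_0$ it contains every clopen of $\baire X$, as the statement demands. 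Note also that $B$, being closed under $\C$ in $\baire X$, is a monadic subalgebra of $A$ isomorphic to $\Kur(\mathfrak C_n)$.

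Strong completeness would then be immediate, following the completeness half of Theorem~\ref{theoComplete}\ref{itCompleteOne} with $A$ in place of $\baire X$: assuming $\Gamma\models_A\varphi$, restriction of valuations along $\Kur(\mathfrak C_n)\cong B\subseteq A$ yields $\Gamma\models_{\mathfrak C_n}\varphi$, whence $\Gamma\vdash_{{\sf S5}_n}\varphi$ by Theorem~\ref{theoS5n}.

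The real work is soundness, i.e.\ checking $A\models\sfiven n$ (that $A$ validates $\sf S5$ is automatic from monadicity). Here I would first record the identity $\C(a\wedge b)=a\wedge\C b$ for clopen $a$, whose nontrivial direction follows from $\C b=\C(a\wedge b)\vee\C(\lnot a\wedge b)$ together with $\C(a\wedge b)\le a$ and $\C(\lnot a\wedge b)\le\lnot a$. Combining this with the facts that $\C$ distributes over finite joins, that every nonzero $b\in B\cong\Kur(\mathfrak C_n)$ has $\C b=1$, and that $B$ is a finite Boolean algebra with exactly $n$ atoms, I would then take an arbitrary valuation, put $\val{p_1},\dots,\val{p_{n+1}}$ in the compatible normal form of Lemma~\ref{lem:generation}\ref{itgenerationthree}, namely $\val{p_i}=\bigvee_{k=1}^{m}(a_k\wedge b^k_i)$ with the $a_k\in A_0$ pairwise orthogonal and summing to $1$ and each $b^k_i\in B$, and compute
\[
\C\val{p_i}=\bigvee_{k\,:\,b^k_i\ne 0}a_k,\qquad \C(\val{p_i}\wedge\val{p_j})=\bigvee_{k\,:\,b^k_i\wedge b^k_j\ne 0}a_k .
\]
Since the $a_k$ partition $1$, the antecedent $\bigwedge_{i=1}^{n+1}\C\val{p_i}$ equals the join of those $a_k$ for which all of $b^k_1,\dots,b^k_{n+1}$ are nonzero. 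For such a $k$ I would pick an atom of $B$ below each $b^k_i$; as there are $n+1$ choices among only $n$ atoms, two of them coincide, giving $i<j$ with $b^k_i\wedge b^k_j\ne 0$. Hence each such $a_k$ lies below $\bigvee_{1\le i<j\le n+1}\C(\val{p_i}\wedge\val{p_j})$, which is exactly the inequality $\val{\sfiven n}=1$.

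I expect this last computation to be the main obstacle: the crux is forcing all $n+1$ valuations onto one common clopen partition (which is precisely what Lemma~\ref{lem:generation}\ref{itgenerationthree} provides) so that $\C$ can be analyzed fiberwise, after which $\sfiven n$ collapses to the pigeonhole statement that $n+1$ nonempty subsets of an $n$-element set must overlap in at least one pair. Together with the completeness argument above, this gives that ${\sf S5}_n$ is sound and strongly complete for $A$.
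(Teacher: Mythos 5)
Your proposal is correct and follows essentially the same route as the paper's own proof: the subalgebra generated by the clopens together with an embedded copy of $\Kur(\mathfrak C_n)$ (via Lemmas~\ref{corEmbed} and~\ref{lem:BaireIsEmbed}), monadicity and the compatible normal form from Lemma~\ref{lem:generation}, completeness by restricting to the embedded cluster algebra and invoking Theorem~\ref{theoS5n}, and soundness by analyzing $\sfiven n$ fiberwise over the orthogonal clopens and applying the pigeonhole argument to the $n$ dense atoms. The only differences are cosmetic: your $A$/$B$ naming is swapped relative to the paper's, and you make explicit the identity $\C(a\wedge b)=a\wedge\C b$ for clopen $a$, which the paper uses without comment.
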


\proof
Let $\mathfrak C_n=\{w_1,\ldots,w_n\}$ be the $n$-cluster.
By Lemmas~\ref{lem:BaireIsEmbed} and~\ref{corEmbed},
there is an embedding $h\colon {\bf Kur}(\mathfrak C_n)\to \baire X$. Let $A = h[{\bf Kur}(\mathfrak C_n)]$ and $H = \{h(w) \mid w\in \mathfrak C_n\}$. 
Then $A$ is a monadic subalgebra of $\baire X$ generated by $H$, and each $h(w)$ is an atom of $A$.
Let $B$ be the Boolean subalgebra of $\baire X$ generated by $A$ and the clopen elements of $\baire X$. By Lemma~\ref{lem:generation}(1), $B$ is a monadic subalgebra of $\baire X$. We claim that the logic of $B$ is ${\sf S5}_n$.

Since ${\sf S5}_n$ is complete for $\mathfrak C_n$, it follows that ${\sf S5}_n$ is complete for $B$.
It remains to check that ${\sf S5}_n$ is also sound for $B$.
For this it is sufficient to show that $\sfiven n$ is valid on $B$.
Let $\val\cdot$ be a valuation on $B$.
By Lemma~\ref{lem:generation}(3),
we may write each $\val{p_i} $ in the form $\bigsqcup_{ \ell = 1}^m (u_\ell  \sqcap a^i_\ell )$, where the $u_\ell $ are orthogonal clopens and the $a^i_\ell $ are elements of $A$. Therefore, 
since each $u_\ell$ is clopen, $\C(u_\ell \sqcap a_\ell^i) = u_\ell \sqcap \C a_\ell^i$, so we have
\begin{align*}
\val{\bigwedge _{i=1}^{n+1} \D p_i} = \bigsqcap_{i=1}^{n+1} \bigsqcup_{\ell = 1}^{m} \C (u_\ell  \sqcap a^i_\ell ) = \bigsqcap_{i=1}^{n+1} \bigsqcup_{\ell = 1}^{m} (u_\ell  \sqcap \C a^i_\ell )
=\bigsqcup_{\ell = 1}^{m} (u_\ell \sqcap \bigsqcap_{i=1}^{n+1} \C a^i_\ell ),
\end{align*}
where the last equality uses distributivity plus the orthogonality of the $u_\ell$, so that all cross-terms cancel.
Similarly,
\[
\val{\bigvee_{i\not = j} \D (p_i \wedge p_j)} = \bigsqcup_{i \neq j} \bigsqcup_{\ell = 1}^{m} (u_\ell  \sqcap  \C ( a^i_\ell  \sqcap a^j_\ell ) ) = \bigsqcup_{\ell = 1}^{m} (u_\ell  \sqcap \bigsqcup_{i \neq j} \C ( a^i_\ell  \sqcap a^j_\ell ) ).
\]
So to show that $\val\varphi = 1$, it suffices to show that
\begin{equation}\label{eqSqusbset}
\bigsqcup_{\ell = 1}^{m} (u_\ell \sqcap \bigsqcap_{i=1}^{n+1} \C a^i_\ell )
\sqsubseteq \bigsqcup_{\ell = 1}^{m} (u_\ell  \sqcap  \bigsqcup_{i \neq j} \C ( a^i_\ell  \sqcap a^j_\ell ) ) .
\end{equation}
Given that the $u_\ell$ are orthogonal, \eqref{eqSqusbset} holds iff, for each $\ell$,
\[
u_\ell \sqcap \bigsqcap_{i=1}^{n+1} \C a^i_\ell \sqsubseteq
u_\ell \sqcap \bigsqcup_{i \neq j} \C ( a^i_\ell \sqcap a^j_\ell ),
\]
and for this it suffices to show that
\begin{equation}\label{eqUL}
 \bigsqcap_{i=1}^{n+1} \C a^i_\ell \sqsubseteq
 \bigsqcup_{i \neq j} \C ( a^i_\ell \sqcap a^j_\ell ).
\end{equation}
So fix $\ell$.
Note that \eqref{eqUL} holds trivially if $a^i_\ell = 0$ for some $i$ (as the left-hand side is then zero), so we assume otherwise.
Since $A$ is generated by $H$, for each $i$ there is $w \in \mathfrak C_n$ such that $h(w ) \sqsubseteq a^i_\ell$.
Because $| \mathfrak C_n | = n$, the pigeonhole principle yields that there are $i\neq j$ and $w$ so that $h(w) \sqsubseteq a^i_\ell \sqcap a^j_\ell$.
But $\C h(w) = 1$, so $\C (a^i_\ell \sqcap a^j_\ell) = 1$.
Therefore,
\[
\bigsqcap_{i=1}^{n+1} \C a^i_\ell \sqsubseteq  1 = \C(a^i_\ell \sqcap a^j_\ell) \sqsubseteq
 \bigsqcup_{i \neq j} \C ( a^i_\ell \sqcap a^j_\ell ).
\]
Thus, \eqref{eqUL} holds and, since $\ell$ was arbitrary, we conclude that \eqref{eqSqusbset} holds, as needed.
\endproof

As a consequence of Theorems~\ref{thmClosedS5n} and~\ref{theoResolve} we obtain:

\begin{corollary}\label{corSubal}
Let $X$ be a crowded space which is either a complete metric space of cardinality $\mathfrak c$ or second-countable locally compact Hausdorff.
Then for each nonzero $n<\omega$ there is a subalgebra of $\baire X$ containing all clopen elements such that ${\sf S5}_n$ is sound and strongly complete for $\baire X$.
\end{corollary}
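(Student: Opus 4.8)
The plan is to obtain the corollary as a direct combination of Theorems~\ref{theoResolve} and~\ref{thmClosedS5n}. First I would observe that the hypotheses match: a complete metric space of cardinality $\mathfrak c$ is in particular completely metrizable and continuum sized, and ``second-countable locally compact Hausdorff'' is exactly the other hypothesis of Theorem~\ref{theoResolve}. I would also note that in either case $X$ is a Baire space---a completely metrizable space and a locally compact Hausdorff space are both Baire by the Baire Category Theorem---which is needed in order to apply Theorem~\ref{thmClosedS5n}. With this in place, Theorem~\ref{theoResolve} gives that $X$ is Baire $\mathfrak c$-resolvable, so by Lemma~\ref{lemmDefRes} there is a partition $\{A_\iota \mid \iota < \mathfrak c\}$ of $X$ into nowhere meager sets.

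The one intermediate step is to pass from Baire $\mathfrak c$-resolvability to Baire $n$-resolvability for each fixed nonzero $n < \omega$. Since $n \le \mathfrak c$, I would coarsen the resolution: partition the index set $\mathfrak c$ into $n$ nonempty blocks $I_1, \dots, I_n$ and set $C_k = \bigcup_{\iota \in I_k} A_\iota$. Then $\{C_1, \dots, C_n\}$ is again a partition of $X$, and because $\meager$ is an ideal every superset of a nowhere meager set is nowhere meager; as each $C_k$ contains at least one $A_\iota$, it is nowhere meager. By Lemma~\ref{lemmDefRes} this witnesses that $X$ is Baire $n$-resolvable.

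Finally, applying Theorem~\ref{thmClosedS5n} to $X$ and $n$ produces a subalgebra $A$ of $\baire X$ containing all clopen elements of $\baire X$ for which ${\sf S5}_n$ is sound and strongly complete, as required. I do not expect a genuine obstacle here: the substantive content---both the resolvability construction and the soundness-and-completeness argument built on Lemma~\ref{lem:generation}---is already packaged in the two invoked theorems. The only point I would take care to state explicitly is the elementary observation of the previous paragraph, namely that coarsening a $\mathfrak c$-resolution preserves nowhere meagerness and hence yields an $n$-resolution, so that the finite hypothesis of Theorem~\ref{thmClosedS5n} is met.
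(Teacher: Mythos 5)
Your proposal is correct and takes essentially the same route as the paper, which obtains the corollary directly as a consequence of Theorems~\ref{theoResolve} and~\ref{thmClosedS5n} without further comment. The two points you spell out---that $X$ is a Baire space by the Baire Category Theorem, and that coarsening a Baire $\mathfrak c$-resolution into $n$ nonempty blocks yields a Baire $n$-resolution since supersets of nowhere meager sets are nowhere meager---are exactly the steps the paper leaves implicit, and both are sound.
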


\section{New completeness results for $\sf S5U$}

In this final section, we extend our completeness results to the language with the universal modality (recall that the latter can be interpreted in arbitrary algebraic models as per Definition~\ref{DefSem}).  
As in the previous section, these results utilize standard Kripke completeness results.
In particular, the following completeness result is well known (see, e.g.,
\cite{GP92}), and can be lifted to strong completeness via standard methods.

\begin{theorem}\label{theoStrongCompCount}
$\sf S5U$ is sound and strongly complete for the class of countable $\sf S5$-frames.
\end{theorem}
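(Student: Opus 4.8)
The plan is to reduce the claim about $\sf S5U$ to the already-established completeness result for $\sf S5$ (Theorem~\ref{theoS5comp}), treating the universal modality as a second, `global' $\sf S5$-box layered on top of the topological one. Concretely, I would run the relativized canonical model construction for $\sf S5U^\lambda$ exactly as sketched for Theorem~\ref{theoS5comp}, building $\mathfrak M_c^\Gamma = (W_c^\Gamma, R_c^\Gamma, \val\cdot_c^\Gamma)$ whose worlds are the maximal $\sf S5U$-consistent sets containing $\Gamma$. The key structural fact I would need is the interaction of the two relations: the accessibility relation $R_c^\Gamma$ for $\D$ is an equivalence relation (from the $\sf S5$ axioms $\rm T$, $\rm 4$, and $5$), and the accessibility relation for $\forall$ is the total relation $W_c^\Gamma \times W_c^\Gamma$, because $\forall$ also satisfies the $\sf S5$ axioms and we have additionally collected all of $\Gamma$ into every world.

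First I would verify the Truth Lemma for both modalities simultaneously, which for $\forall$ amounts to checking that $\forall\varphi \in w$ iff $\varphi \in v$ for every $v \in W_c^\Gamma$; this is the standard computation using the $\sf S5$ axioms for $\forall$ and the maximality of each world. The connecting axiom $\D\varphi \to \exists\varphi$ forces $R_c^\Gamma$ to refine the universal relation, which is automatic here since every relation refines the total one. Next, from $\Gamma \not\vdash_{\sf S5U} \varphi$ and the Lindenbaum lemma (proved via Zorn's lemma in the uncountable case, as in Theorem~\ref{theoS5comp}), I obtain a world $w_0 \supseteq \Gamma \cup \{\neg\varphi\}$, so $\mathfrak M_c^\Gamma \models \Gamma$ while $\mathfrak M_c^\Gamma \not\models \varphi$. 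The model as built may be too large, so I would invoke the downward L\"owenheim--Skolem theorem exactly as in the proof of Theorem~\ref{theoS5comp}, viewing $\lang\lambda{\D\forall}$ as a fragment of first-order logic over the two-sorted-relation structure $(W_c^\Gamma, R_c^\Gamma)$, to pass to a countable elementary submodel that still validates $\Gamma$ and refutes $\varphi$.

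The main obstacle I anticipate is making the L\"owenheim--Skolem step interact correctly with the universal modality: one must ensure that the elementary substructure preserves the \emph{totality} of the $\forall$-relation, so that $\forall$ continues to quantify over \emph{all} worlds of the submodel rather than over a proper subset. This is where the statement's restriction to \emph{countable} frames matters and where I would spend the most care. Since $\forall$ ranges over the total relation, ``$\forall\varphi$'' is a genuinely global first-order property, and I would frame the first-order translation so that the universal box becomes quantification over the whole sort; an elementary submodel then automatically restricts $\forall$ to the submodel's own universe, preserving totality. Once this is handled, the surviving submodel is a countable $\sf S5$-frame (its $R$-relation an equivalence, its $\forall$-relation total) refuting $\varphi$ from $\Gamma$, giving strong completeness; soundness is the routine verification that the $\sf S5U$ axioms hold in every $\sf S5$-frame under the total interpretation of $\forall$.
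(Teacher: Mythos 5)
Your overall strategy (relativized canonical model for $\sf S5U$ followed by downward L\"owenheim--Skolem) is the right one; in fact the paper gives no proof of this theorem at all, citing Goranko--Passy and remarking that the lift to strong completeness is by ``standard methods,'' and your outline follows the paper's own sketch of Theorem~\ref{theoS5comp}. However, there is a genuine gap at precisely the one point where the universal modality requires real work: your claim that the accessibility relation for $\forall$ on $W_c^\Gamma$ is the total relation $W_c^\Gamma \times W_c^\Gamma$ is false, and with it the Truth Lemma for $\forall$ as you state it. Containing $\Gamma$ does not force two maximal consistent sets to agree on $\forall$-formulas: take $\Gamma = \varnothing$ and note that $\{\forall p\}$ and $\{\neg p\}$ each extend to maximal $\sf S5U$-consistent sets $w,v \in W_c^\Gamma$; then $\forall p \in w$ while $p \notin v$, so ``$\forall\varphi \in w$ iff $\varphi \in v$ for every $v \in W_c^\Gamma$'' fails in the left-to-right direction. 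The $\sf S5$ axioms for $\forall$ make the canonical $\forall$-relation (defined by $w R_\forall v$ iff $\{\psi : \forall\psi \in w\} \subseteq v$) an equivalence relation on $W_c^\Gamma$ --- nothing more; it is emphatically not total.

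The missing idea is the standard repair: work with the canonical $R_\forall$, observe that the connecting axiom $\D\varphi \to \exists\varphi$ (equivalently $\forall\varphi \to \B\varphi$) yields $R_c^\Gamma \subseteq R_\forall$, and then pass to the submodel generated by the refuting world $w_0 \supseteq \Gamma \cup \{\neg\varphi\}$, i.e., restrict to the $R_\forall$-equivalence class of $w_0$. On that class $R_\forall$ is total, so the Truth Lemma for both modalities holds with $\forall$ read as a genuinely global quantifier; every surviving world still contains $\Gamma$ (here one uses that $\vdash_{\sf S5U}$ treats $\Gamma$ as axioms, closed under necessitation), so $\Gamma$ remains valid in the submodel while $\varphi$ fails at $w_0$. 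After this step your L\"owenheim--Skolem argument goes through verbatim. Note also that the preservation issue you flagged at the L\"owenheim--Skolem stage is actually a non-issue: translating $\forall$ as quantification over the whole (one-sorted) domain, any elementary submodel automatically reinterprets it as quantification over its own universe. The place demanding care was the canonical-model stage, not the model-shrinking stage; soundness is, as you say, routine.
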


In the topological setting, the language $\lang {}{\D\forall}$ is able to express connectedness by Shehtman's axiom
\[
\forall (\B p \vee \B \neg p)\rightarrow \forall p \vee \forall \neg p
\]
(see \cite{She99}).
Therefore, $\sf S4U$ is not complete for any connected space.
The situation is quite different in the setting of Baire algebras, and in order to see this, we will need to discuss disconnectedness in this context. This notion has already been considered in the context of closure algebras by McKinsey and Tarski \cite{MT44}.

\begin{definition}
A closure algebra $(B,\C)$ is {\em disconnected} if there are open $a,b \in B$ such that $a,b \not = 0$, $a \wedge  b = 0$, and $ a \vee  b = 1$.
\end{definition}

\begin{lemma}
If $X$ is a Hausdorff Baire space with at least two points, then $\baire X$ is disconnected.
\end{lemma}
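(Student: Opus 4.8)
The plan is to exploit the Hausdorff separation property to produce two disjoint nonempty open sets whose union is dense, and then to pass to the quotient $\baire X$ where these will become the required complementary open elements.

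First I would pick two distinct points $x,y \in X$ and use the Hausdorff property to find disjoint open sets $U \ni x$ and $V \ni y$. These are nonempty and disjoint, but in general $U \cup V \ne X$, so they do not immediately witness disconnectedness. The natural fix is to absorb the ``leftover'' part of $X$ into one of the two opens. Concretely, I would set $a = [U]$ and $b = [X \setminus U]$. Since $X \setminus U$ is closed, recall from the discussion preceding Theorem~\ref{lemmHComplete} that $[X \setminus U] = [\I(X \setminus U)]$, so $b$ is an open element of $\baire X$; and $a$ is open by definition. Clearly $a \sqcap b = [U \cap (X \setminus U)] = [\varnothing] = 0$ and $a \sqcup b = [U \cup (X \setminus U)] = [X] = 1$. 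Thus the only thing left to verify is that $a \ne 0$ and $b \ne 0$, i.e.\ that neither $U$ nor $X \setminus U$ is meager.

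The main obstacle is precisely this nontriviality: I must rule out that one of the two pieces is meager. Here the Baire space hypothesis does the work for $a$: since $U$ is a nonempty open set and $X$ is a Baire space, $U \notin \meager$, so $a = [U] \ne 0$. For $b$ I would argue that $V \subseteq X \setminus U$ (because $U \cap V = \varnothing$), and $V$ is again a nonempty open set; since $X$ is a Baire space, $V \notin \meager$, and as $V \subseteq X \setminus U$ this forces $X \setminus U \notin \meager$, whence $b \ne 0$. This is where the hypothesis of at least two points is essential, as it guarantees the second nonempty open set $V$ disjoint from $U$.

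Putting these together, $a$ and $b$ are nonzero open elements of $\baire X$ with $a \sqcap b = 0$ and $a \sqcup b = 1$, so $\baire X$ is disconnected by definition. I expect the proof to be short; the only subtlety is remembering that $[X \setminus U]$ is open in $\baire X$ despite $X \setminus U$ being closed in $X$, which is exactly the clopen phenomenon special to Baire algebras noted just before Theorem~\ref{lemmHComplete}.
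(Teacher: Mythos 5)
Your proof is correct and follows essentially the same route as the paper's: both pick disjoint open neighborhoods $U \ni x$, $V \ni y$ via Hausdorffness, take $a=[U]$, use the Baire property to get $a \neq 0$, and use $V \subseteq X \setminus U$ to get that the complement is nonzero, with the clopen phenomenon in $\baire X$ supplying openness of the complementary element. The only cosmetic difference is that the paper phrases this as ``$a$ is open, hence clopen, and $a \neq 1$,'' whereas you work explicitly with $b = [X\setminus U] = -a$ and justify its openness via the identity $[F]=[\I F]$; these are the same argument.
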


\proof
Let $x\not = y$ in $X$, and let $U,V$ be the disjoint open neighborhoods of $x,y$.
Set $a = [U]$. Since $U$ is open, $a$ is open, hence clopen in $\baire X$. Because $X$ is a Baire space, $U$ is non-meager, so $a\ne 0$. Also, $X\setminus U$ is non-meager because $V\subseteq X\setminus U$. Thus, $a\ne 1$, and hence $\baire X$ is disconnected.
\endproof

As was the case with resolvability, disconnectedness readily extends to $\kappa$-dis\-connected\-ness for any cardinal $\kappa$.

\begin{definition}
A closure algebra $(B,\C)$ is {\em $\kappa$-disconnected} if there is a sequence $(a_\iota)_{\iota<\kappa} \subseteq B$ of nonzero pairwise orthogonal clopen elements such that $\bigvee_{\iota < \kappa } a_\iota = 1$.
\end{definition}

\begin{lemma}\label{lemmOmDisc}
If $X$ is an infinite Hausdorff Baire space, then $\baire X$ is $\omega$-disconnected.
\end{lemma}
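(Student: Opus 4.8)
I would like to prove that an infinite Hausdorff Baire space $X$ yields an $\omega$-disconnected $\baire X$, i.e.\ I must produce a sequence $(a_n)_{n<\omega}$ of nonzero, pairwise orthogonal clopen elements of $\baire X$ whose join is $1$. The natural strategy is to build, inside $X$, a countable family of pairwise disjoint nonempty open sets $(U_n)_{n<\omega}$ whose union is dense (or co-meager) in $X$, and then set $a_n = [U_n]$; openness of each $U_n$ makes $a_n$ clopen in $\baire X$ (recall that for open $U$ we have $[\C U]=[U]$, so $[U]$ is both open and closed), the Baire property guarantees each $a_n\neq 0$, disjointness gives pairwise orthogonality, and density of the union gives $\bigsqcup_n a_n = 1$.

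The construction of the $U_n$ is the heart of the argument, so I would carry it out first. Since $X$ is infinite and Hausdorff, I can pick infinitely many distinct points and, using Hausdorffness, separate them by disjoint open neighborhoods; the obstacle is to do this so that the union of the chosen opens is dense, because finitely or countably many disjoint opens need not cover $X$ even up to meager error. The clean way to handle this is a maximality/Zorn argument: consider the family of all collections of pairwise disjoint nonempty open sets, partially ordered by inclusion, and take a maximal such collection $\mathcal U$. Maximality forces $\bigcup\mathcal U$ to be dense: if some nonempty open $W$ were disjoint from $\bigcup\mathcal U$, I could adjoin $W$, contradicting maximality. The family $\mathcal U$ is then infinite (if it were finite, its union would be a proper clopen-up-to-nothing piece and, since $X$ is infinite Hausdorff, one of its members would split further, contradicting maximality), so I may extract a countably infinite subfamily $(U_n)_{n<\omega}$.

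The one subtlety is that a countable subfamily of a maximal family need not have dense union. To fix this, I would not merely extract a countable subfamily but instead absorb the ``leftover'' into the last piece: having found pairwise disjoint nonempty opens $U_0,U_1,\dots$ whose union $D=\bigcup_n U_n$ is dense, I set $a_n = [U_n]$ for $n\geq 1$ and replace $U_0$ by $U_0 \cup (X\setminus \C D)$ or, more simply, note that $\bigsqcup_{n<\omega}[U_n] = [D] = [\C D] = 1$ because $D$ is dense and hence $X\setminus \C D=\varnothing$, so $X\setminus D \subseteq \C D\setminus D$ is nowhere dense. Thus no absorption is even needed once the union is dense. Concretely, I would argue that because $D$ is dense, $X\setminus D$ has empty interior in its closure and is therefore meager (indeed nowhere dense), giving $[D]=1$ and hence $\bigsqcup_n a_n = \bigsqcup_n [U_n] = [\bigcup_n U_n] = 1$.

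The main obstacle, as indicated, is ensuring density of the union of a \emph{countable} disjoint open family while keeping the family infinite; the Zorn/maximality step resolves density, and infiniteness of the maximal family follows from $X$ being infinite and Hausdorff. Once these are in hand, verifying that each $[U_n]$ is a nonzero clopen element (using the Baire property for nonzeroness and openness of $U_n$ together with $[\C U_n]=[U_n]$ for clopenness) and that they are pairwise orthogonal with join $1$ is routine, completing the proof that $\baire X$ is $\omega$-disconnected.
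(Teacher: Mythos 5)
Your overall framework is sound, and in fact slightly overengineered: classes of open sets are clopen in $\baire X$ (since in $\baire X$ an element is open iff it is closed), disjointness gives orthogonality, the Baire property gives nonzeroness, and countable joins are computed by unions. Moreover, your own absorption remark (replacing $U_0$ by $U_0 \cup (X\setminus \C D)$) already shows that density of $D=\bigcup_n U_n$ is not needed at all: the boundary $\C D\setminus D$ of the open set $D$ is nowhere dense, so \emph{any} countably infinite pairwise disjoint family of nonempty open sets suffices. This is essentially what the paper does --- it absorbs the entire leftover $X\setminus\bigcup_{n\geq 1}U_n$ into $a_0$, using that closed elements of $\baire X$ are automatically clopen.

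The genuine gap is in producing an infinite pairwise disjoint family of nonempty open sets, which is the real crux of the lemma, and both of your mechanisms for it fail. First, Hausdorffness lets you separate any two (hence any finitely many) points by disjoint opens, but not infinitely many points simultaneously: in the one-point compactification $X=\omega\cup\{\infty\}$ of a countable discrete space (an infinite Hausdorff Baire space), every neighborhood of $\infty$ is cofinite, so the points $0,1,2,\dots,\infty$ admit no pairwise disjoint neighborhoods; the points must be chosen judiciously, not arbitrarily. Second, the Zorn argument does not yield infiniteness: $\{X\}$ is itself a maximal family of pairwise disjoint nonempty open sets (no nonempty open set is disjoint from $X$), so maximal families can consist of a single member. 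Your parenthetical justification --- ``one of its members would split further, contradicting maximality'' --- conflates refining a member with extending the family: replacing $U_i$ by two disjoint open subsets of $U_i$ produces a different family, not a superset of $\mathcal U$, so inclusion-maximality is not contradicted. The paper supplies exactly the missing content by a case split: if $X$ has no limit point, every point is isolated and infinitely many disjoint open singletons exist; if $X$ has a limit point $x_\ast$, one recursively constructs disjoint nonempty open sets $U_n$ together with a shrinking chain of neighborhoods $V_n$ of $x_\ast$, applying Hausdorff separation only finitely many times at each stage. You would need this (or an equivalent) argument to close the gap.
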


\proof
First suppose that $X$ has no limit points.
Then every point is isolated, so each singleton $\{x_i\}$ is open, hence clopen and non-meager. 
Since $X$ is infinite, we can choose a sequence of distinct points $(x_i)_{i < \omega}$. 
Let $a_0 = [ X \setminus \{x_i\}_{1 \leq i<\omega} ] $ and for $i>0$ let $a_i = [ \{x_i\} ] $.
Then $(a_i)_{i<\omega}$ is a sequence of nonzero pairwise orthogonal clopens of $\baire X$,
witnessing that $X$ is $\omega$-disconnected.

Next suppose that $X$ has a limit point, say $x_\ast$. We define a sequence of points $(x_i)_{ i <\omega}$ and two sequences of open sets $(U_i)_{i \leq n}$ and $(V_i)_{ i \leq n}$ as follows.

To begin, choose $x_0 \not = x_\ast$, and let $U_0,V_0$ be disjoint open neighborhoods of $x_0,x_\ast$, respectively.
For the inductive step, suppose that $(x_i)_{ i \leq n}$, $(U_i)_{ i \leq n}$, $(V_i)_{ i \leq n}$ are already chosen and satisfy the following conditions:
\begin{enumerate}

\item for all $i \leq n$, we have $x_i \in U_i$, $x_\ast \in V_i$, and $U_i$, $V_i$ are open;

\item if $i<j \leq n$, then $U_i\cap U_j = \varnothing$, and

\item if $i\leq j \leq n$, then $U_i\cap V_j = \varnothing$.

\end{enumerate}
Since $x_\ast$ is a limit point, choose $x_{n+1} \in V_{n}\setminus \{x_\ast\}$. Let $U,V$ be disjoint neighborhoods of $x_{n+1}, x_\ast$, respectively, and define $U_{n+1} = U\cap V_n$, $V_{n + 1} = V \cap V _n $. It is not hard to check that the sequences $(x_i)_{ i \leq n+1}$, $(U_i)_{ i \leq n+1}$, $(V_i)_{i \leq n+1}$ satisfy all the desired properties.

Once we have constructed $(U_i)_{i < \omega}$, we let $a_0 = [X\setminus \bigcup_{1\leq n<\omega} U_n]$ and for $i>0$ we let $a_i = [U_i]$. Then $(a_i)_{i<\omega}$ is the desired sequence of nonzero pairwise orthogonal clopens of $\baire X$.
\endproof

\begin{remark}
By Lemma \ref{lemmOmDisc}, the Baire algebra of the Cantor space is $\omega$-dis\-connected.
In contrast, the Cantor space, while disconnected, is not $\omega$-disconnected because, by compactness, any partition into disjoint open sets must be finite.
More generally, no compact space can be $\omega$-disconnected.
\end{remark}

\begin{remark}\label{remKappaDis}
Let $X$ be an infinite separable Hausdorff Baire space.
Then $\baire X$ is $\omega$-disconnected by Lemma~\ref{lemmOmDisc}. On the other hand,
$\baire X$ is not $\kappa$-disconnec\-ted for any cardinal $\kappa > \omega$. For, suppose that $(a_i)_{i<\kappa} \subseteq \baire {X}$ are pairwise orthogonal and clopen. Then for each $i <\kappa $, there is an open set $A_i\subseteq X$ with $a_i = [A_i]$. We must have that $A_i \cap A_j = \varnothing$ if $i\not = j$, for otherwise their intersection, being open, would be non-meager. It follows that $\kappa \le \omega$ as no separable space admits an uncountable collection of disjoint open subsets. 
\end{remark}

\begin{theorem}\label{theoExistsEmbedding}
Let $X$ be a Baire space and $W$ an $\sf S5$-frame with the upper cluster size $\lambda$ and the number of clusters $\kappa$.
If $\baire X$ is $\kappa$-disconnected and $\lambda$-resolvable, then there is an embedding $h\colon {\Kur}(W) \to \baire X$.
\end{theorem}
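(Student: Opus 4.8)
The plan is to reduce the statement to the construction of an exact Baire map $f\colon X\to W$ and then invoke Lemma~\ref{lem:BaireIsEmbed} together with the identification $\baire W\cong\Kur(W)$ provided by Example~\ref{examS4}. Write $W=\bigcup_{\iota<\kappa}C_\iota$ as its disjoint union of clusters, with $\lambda_\iota:=|C_\iota|\le\lambda$ since $\lambda$ is the upper cluster size. As $W$ is an $\sf S5$-frame, its clusters are the minimal open sets, so the open subsets of $W$ are exactly the unions of clusters. Consequently, a partial map $f$ will be an exact Baire map precisely when: (i) it is defined almost everywhere; (ii) $[f^{-1}(V)]$ is open for every open $V\subseteq W$ (equivalently, for each single cluster, the rest following since opens are cluster-unions); (iii) each fibre $f^{-1}(w)$ is non-meager, which by Lemma~\ref{lemmBaireIsEmbed} is exactly exactness for a Baire map into an $\sf S5$-frame; and (iv) $f$ is Baire-open. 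I would build $f$ by handling each cluster on its own clopen piece and then gluing.

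First I would use $\kappa$-disconnectedness to split $X$ into clopen pieces, one per cluster. Let $(a_\iota)_{\iota<\kappa}$ be nonzero, pairwise orthogonal clopen elements of $\baire X$ with $\bigvee_{\iota<\kappa}a_\iota=1$, and pick open representatives $a_\iota=[U_\iota]$. Here the hypothesis that $X$ is a Baire space does the essential work: for $\iota\neq\iota'$ the set $U_\iota\cap U_{\iota'}$ is open and meager, hence empty, so the $U_\iota$ are \emph{genuinely} disjoint; moreover $[\bigcup_\iota U_\iota]$ is an upper bound of the $a_\iota$ whose supremum is $1$, so $X\setminus\bigcup_\iota U_\iota$ is meager, and each $U_\iota$ is nonempty. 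Thus $X$ decomposes, modulo a meager set, as the disjoint union of the nonempty open sets $U_\iota$, and $f$ will send $U_\iota$ onto the cluster $C_\iota$; this already secures (ii), since $[f^{-1}(C_\iota)]=[U_\iota]=a_\iota$ is clopen and preimages of cluster-unions are unions of the $U_\iota$.

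Next I would resolve each piece internally using $\lambda$-resolvability. Relativising the given resolution $(d_\nu)_{\nu<\lambda}$ to the clopen $a_\iota$ yields the pairwise orthogonal family $(a_\iota\band d_\nu)_{\nu<\lambda}$, each member of which is dense below $a_\iota$ by the standard identity $\C(a\wedge b)=a\wedge\C b$, valid for clopen $a$ in any monadic algebra. Reindexing down to $\lambda_\iota$ (permissible since $\lambda_\iota\le\lambda$) and, after disjointifying and absorbing the overlaps into one block (the delicate point discussed below), this produces a Baire $\lambda_\iota$-resolution of the subspace $U_\iota$, i.e.\ a partition of $U_\iota$ into $\lambda_\iota$ nowhere-meager sets indexed by the points of $C_\iota$; compare Lemma~\ref{lemmGlobalToLocalRes}. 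By Lemma~\ref{corEmbed} this is exactly the data of a total exact Baire map $f_\iota\colon U_\iota\to\mathfrak C_{\lambda_\iota}$. Gluing the $f_\iota$ gives $f\colon X\to W$, defined on the co-meager set $\bigcup_\iota U_\iota$, so (i) holds; conditions (iii) and (iv) are then checked piecewise, each fibre $f^{-1}(w)$ being a nowhere-meager block, and every nonempty open $V\subseteq X$ meeting some $U_\iota$ with full-cluster image on each piece it meets. Lemma~\ref{lem:BaireIsEmbed} then yields the desired embedding $h\colon\Kur(W)\to\baire X$.

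The main obstacle is the internal resolution step, namely producing \emph{honest} disjoint nowhere-meager partitions of the pieces rather than merely algebraically orthogonal dense families. At the cluster level this is painless, because the Baire property converts orthogonality of open sets into genuine disjointness; but within a single $U_\iota$ the representatives of the $d_\nu$ are not open, so their pairwise meager overlaps cannot be discarded for free. Controlling these overlaps---so that the merged blocks remain nowhere meager while partitioning $U_\iota$---is where the Banach Category Theorem (Theorem~\ref{theoBanachCT}) and the careful relationship between Baire and algebraic resolvability must be invoked, and this is the technical heart of the argument.
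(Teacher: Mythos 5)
Your proposal follows the paper's own proof essentially step for step: use $\kappa$-disconnectedness plus the Baire property of $X$ to split $X$ (up to a meager set) into genuinely disjoint nonempty open pieces $U_\iota$, one per cluster; resolve each piece into $\lambda_\iota$ nowhere meager blocks; define $f$ piecewise by sending the $\nu$-th block of $U_\iota$ to the $\nu$-th point of $C_\iota$; verify that $f$ is an exact Baire map; and conclude with Lemma~\ref{lem:BaireIsEmbed}. The only divergence is bookkeeping: the paper obtains the per-piece resolutions by applying Lemma~\ref{lemmGlobalToLocalRes} (Baire resolvability localizes to nonempty open subspaces), whereas you relativize the algebraic resolution $(d_\nu)_{\nu<\lambda}$ to each clopen $a_\iota$ via $\C(a_\iota\band d_\nu)=a_\iota\band\C d_\nu$ and then convert to an honest partition; these are the same idea in a different order.

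Concerning the step you defer as ``the technical heart,'' it is both easier than you fear and different in nature from what you suggest. The Banach Category Theorem is not the relevant tool: the pairwise overlaps of representatives $D_\nu\cap D_{\nu'}$ are meager, hence have empty interior in a Baire space, so Theorem~\ref{theoBanachCT}, which concerns unions of \emph{open} meager sets, says nothing about them. What is actually needed is the paper's earlier remark that algebraic resolvability of $[Y]$ and Baire resolvability of $Y$ coincide for countable cardinals, and this is elementary: when $\lambda$ (hence each $\lambda_\iota$) is countable, the union of all pairwise overlaps is a countable union of meager sets, hence meager; deleting it from each $D_\nu$ leaves every block nowhere meager (if $D$ is nowhere meager and $M$ is meager, then $(D\setminus M)\cap U$ meager would force $D\cap U\subseteq((D\setminus M)\cap U)\cup M$ to be meager), and the meager remainder of $U_\iota$ is absorbed into a single block to obtain a partition. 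No appeal to Theorem~\ref{theoBanachCT} is required. For uncountable $\lambda$ your worry is legitimate---an uncountable union of meager overlaps need not be meager---but this is a gap you share with the paper: its proof tacitly reads the hypothesis ``$\baire X$ is $\lambda$-resolvable'' as ``$X$ is Baire $\lambda$-resolvable'' when invoking Lemma~\ref{lemmGlobalToLocalRes}. Since the theorem is only applied to countable $W$ (so countable $\kappa$ and $\lambda$), nothing is lost there, and your argument becomes complete in exactly the same regime once the appeal to the Banach Category Theorem is replaced by the countable-union observation above.
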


\proof
Let $(C^\iota)_{\iota <\kappa}$ enumerate the clusters of $W$ and let $\lambda_\iota = |C^\iota |$. Enumerate each $C^\iota$ by $( w^\iota_\nu) _{\nu < \lambda_\iota}$.
Since $\baire X$ is $\kappa$-disconnected, there are pairwise orthogonal elements $([U^\iota])_{\iota < \kappa}$ such that each $U^\iota$ is open and $\bigsqcup_{\iota < \kappa} [U^\iota] = [X]$.
Because $X$ is a Baire space, the $U^\iota$ must be pairwise disjoint as their intersection is open and meager.
By Lemma~\ref{lemmGlobalToLocalRes}, for each $\iota < \kappa$ and $\nu < \lambda_\iota$, there are $A^\iota_\nu \subseteq U^\iota$ that Baire $\lambda_\iota$-resolve $U^\iota$.

Define a partial function $f\colon X \to W$ by $f (x) = w$ if there exist $\iota < \kappa$, $\nu < \lambda_\iota$ such that $x \in A^\iota_\nu$ and $w = w^\iota_\nu$. We show that $f$ is a Baire map.
It is Baire-continuous since if $ V \subseteq W$ is open, 
then $V$ is a union of clusters $\bigcup_{\iota \in I} C^\iota$, hence $f^{-1}(V) = \bigcup_{\iota \in I} f^{-1} (C^\iota) = \bigcup_{\iota \in I} U^{\iota}$. 
It is Baire-open since if $V\subseteq X $ is open, $M\subseteq X$ meager, and $w\in f(V\setminus M)$, then $ w = w^\iota_\nu$ for some $\iota$, $\nu$.
It follows from the definition of $f$ that $V\cap U^\iota\neq\varnothing $, and since the sets $A^\iota_{\nu'}$ yield a Baire $\lambda_\iota$-resolution of $U^\iota$, we have that $A^\iota_{\nu'}  \cap V $ is non-meager for any $\nu' < \lambda_\iota$. Therefore,
$(A^\iota_{\nu'}  \cap V)\setminus M$ is nonempty, and hence $w^\iota_{\nu'} \in f(V\setminus M) $. Since $\nu'$ was arbitrary, $C^\iota \subseteq f(V\setminus M)$, and since $w$ was arbitrary,  $f(V\setminus M)$ is open.
The map $f$ is defined almost everywhere since $[X] = \bigsqcup_{\iota<\kappa}  [U^\iota]$, so $X\setminus \bigcup_{\iota<\kappa} U^\iota$ is meager.
It is trivially meager since the only meager subset of $W$ is the empty set.
Finally, it is exact since $f^{-1}(w^\iota_\nu) = A^\iota_\nu$, which is non-meager.
Thus, $h\colon {\Kur}(W) \to \baire X$ given by $h(B) = [f^{-1} (B)]$ is an embedding by Lemma~\ref{lem:BaireIsEmbed}.
\endproof

\begin{theorem}
If $X$ is $\omega$-resolvable, then ${\sf S5U}$ is sound and strongly complete for $\baire X$.
\end{theorem}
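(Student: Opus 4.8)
The plan is to mirror the proof of Theorem~\ref{theoComplete}(1), replacing the single cluster $\mathfrak C_\lambda$ by an arbitrary countable $\sf S5$-frame and upgrading the embedding machinery from Lemma~\ref{corEmbed} to Theorem~\ref{theoExistsEmbedding}. For soundness, since $\baire X$ is a monadic algebra (Theorem~\ref{thm:baire is monadic}) it validates $\sf S5$ for $\D$; and the global modality $\forall$, interpreted as in Definition~\ref{DefSem}, is an $\sf S5$-operator satisfying the connecting axiom $\D\varphi\to\exists\varphi$, since $\val{\exists\varphi}=1$ whenever $\val\varphi\neq 0$ (so $\C\val\varphi\le 1=\val{\exists\varphi}$), while $\C 0=0$ when $\val\varphi=0$. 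Hence $\baire X\models{\sf S5U}$.

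For completeness I would argue contrapositively. Suppose $\Gamma\not\vdash_{\sf S5U}\varphi$. By Theorem~\ref{theoStrongCompCount} there is a countable $\sf S5$-frame $W$ and a valuation $v$ on $\Kur(W)$ witnessing $\Gamma\not\models_W\varphi$. As $W$ is countable, its number of clusters $\kappa$ and its upper cluster size $\lambda$ both satisfy $\kappa,\lambda\le\omega$. The goal is then to transport this refutation into $\baire X$ along an embedding $h\colon\Kur(W)\to\baire X$ produced by Theorem~\ref{theoExistsEmbedding}, whose two hypotheses I must verify: that $\baire X$ is $\kappa$-disconnected and $\lambda$-resolvable.

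Verifying these hypotheses is the crux. The $\lambda$-resolvability is the easy half: being $\omega$-resolvable, $X$ is an infinite Baire space, and any Baire $\omega$-resolution can be regrouped into $\lambda$ blocks, each a union of blocks of the resolution and hence still nowhere meager, giving Baire $\lambda$-resolvability for every $\lambda\le\omega$. The disconnectedness is the essential ingredient and the main obstacle: $\omega$-resolvability by itself does \emph{not} produce the clopen partition needed once $W$ has more than one cluster (for instance, for an infinite indiscrete space $\baire X$ is a single cluster, and ${\sf S5U}$ genuinely fails there). Here I would invoke Lemma~\ref{lemmOmDisc}: since $X$ is an infinite Hausdorff Baire space, $\baire X$ is $\omega$-disconnected, and collapsing finitely many of the clopens yields $\kappa$-disconnectedness for any $\kappa\le\omega$.

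With both hypotheses secured, Theorem~\ref{theoExistsEmbedding} supplies an embedding $h\colon\Kur(W)\to\baire X$ of closure algebras. The final step, and the one genuinely new relative to the $\D$-only case of Theorem~\ref{theoComplete}, is to check that $h$ also respects the universal modality. Because $h$ is an injective Boolean homomorphism it satisfies $h(a)=1\iff a=1$; so setting $\val p:=h(v(p))$ and inducting on formulas gives $\val\psi=h(v(\psi))$ for every $\psi\in\lang\lambda{\D\forall}$, the $\forall$-clause following precisely because $\val\psi=1\iff v(\psi)=1$. Consequently $\val\gamma=h(1)=1$ for each $\gamma\in\Gamma$ while $\val\varphi=h(v(\varphi))\neq 1$ by injectivity, so $\Gamma\not\models_{\baire X}\varphi$, completing the contrapositive.
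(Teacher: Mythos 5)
Your proof follows the same route as the paper's: soundness from Theorem~\ref{thm:baire is monadic}, and completeness by combining strong Kripke completeness for countable $\sf S5$-frames (Theorem~\ref{theoStrongCompCount}) with the embedding supplied by Theorem~\ref{theoExistsEmbedding}. Your closing induction, showing that injectivity of $h$ is precisely what preserves the $\forall$-clause (since $h(a)=1$ iff $a=1$), is a detail the paper leaves implicit but is the correct justification of the transfer step.

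Where you diverge is in verifying the hypotheses of Theorem~\ref{theoExistsEmbedding}, and there your extra care exposes a real problem rather than creating one. The paper's own proof invokes Theorem~\ref{theoExistsEmbedding} without checking $\kappa$-disconnectedness, and, as you observe, $\omega$-resolvability alone cannot deliver it: an infinite indiscrete space $X$ is a Baire $\omega$-resolvable Baire space, yet $\baire X$ is the Kuratowski algebra of a single cluster, where no valuation validates the $\sf S5U$-consistent set $\{\exists\Box p,\,\exists\Box\neg p\}$ (indeed $\exists\Box p\to\forall p$ is valid there); so even completeness, let alone strong completeness, fails, and the theorem as literally stated is false. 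Your patch---assuming $X$ is infinite Hausdorff so that Lemma~\ref{lemmOmDisc} gives $\omega$-disconnectedness, then collapsing clopens for finite $\kappa$ and regrouping a Baire $\omega$-resolution for $\lambda\le\omega$---is sound, and it proves exactly the version of the result the paper actually relies on: Corollary~\ref{corCompU} concerns only Hausdorff spaces, and the concluding remarks state the result as ``if the Baire algebra is $\omega$-disconnected, then strong completeness extends to ${\sf S5U}$.'' So strictly speaking you have proved an amended statement (with a Hausdorffness, or $\omega$-disconnectedness, hypothesis added), but some such amendment is necessary; the gap you worked around belongs to the paper, not to your argument.
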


\proof
Soundness follows from $\baire X$ being a monadic algebra. 
For completeness, if $\Gamma \not\vdash_{{\sf S5U}}  \varphi$, then by Theorem \ref{theoStrongCompCount}, there is a countable $\sf S5U$-frame $W$ and a valuation $\val\cdot_W$ such that
$\Gamma \not \models_{W}  \varphi$. By Theorem~\ref{theoExistsEmbedding},
there is an embedding $f\colon \Kur (W) \to \baire X$. Thus,
$\Gamma \not \models_{\baire X} \varphi$.
\endproof

\begin{corollary}\label{corCompU}
Let $X$ be a crowded space which is either completely metrizable and of cardinality continuum or locally compact Hausdorff. Then ${\sf S5U}$ is sound and strongly complete for $\baire X$. In particular, ${\sf S5U}$ is sound and strongly complete for $\baire {\mathbb R}$ and $\baire {\bf C}$, where $\bf C$ is the Cantor space.
\end{corollary}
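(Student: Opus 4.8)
The plan is to derive this corollary as a direct application of the preceding theorem, which asserts that $\sf S5U$ is sound and strongly complete for $\baire X$ whenever $X$ is $\omega$-resolvable. Thus the entire task reduces to verifying that the two families of spaces named in the hypothesis are $\omega$-resolvable, and then handling the two named examples $\mathbb R$ and the Cantor space $\bf C$ as instances. The cleanest route is to invoke Theorem~\ref{theoResolve}, which establishes that a crowded space that is either completely metrizable and continuum-sized, or second-countable locally compact Hausdorff, is Baire $\mathfrak c$-resolvable; since $\mathfrak c \geq \omega$, any $\mathfrak c$-resolution yields an $\omega$-resolution by amalgamating $\mathfrak c$-many pieces into $\omega$-many, so such spaces are in particular $\omega$-resolvable.

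The one genuine gap between the hypothesis here and the hypothesis of Theorem~\ref{theoResolve} is in the locally compact Hausdorff case: the corollary drops the second-countability assumption present in Theorem~\ref{theoResolve}. So first I would treat the completely metrizable, continuum-sized case, which is immediate from Theorem~\ref{theoResolve}. Then I would address the locally compact Hausdorff case without second-countability. Here the key observation is that $\omega$-resolvability is a strictly weaker requirement than $\mathfrak c$-resolvability, and it should follow from a witnessing-family argument using only \emph{one} uncountable compact set per relevant open piece rather than a family of size $\mathfrak c$. Concretely, Lemma~\ref{lemmCompUnc} already supplies, for any somewhere-meager $A$ in a crowded locally compact Hausdorff space, a compact $K$ with $|K| \geq \mathfrak c$ disjoint from $A$; the obstacle that forced second-countability was only Lemma~\ref{lemmCompactCount}'s cardinality bound on the number of compact sets, which is needed to keep the witnessing family of size $\leq \mathfrak c$. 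For mere $\omega$-resolvability one does not need the full $\mathfrak c$-witnessing machinery: it suffices to exhibit $\omega$-many nowhere meager pieces, and this can be extracted directly from a single application of the tree construction splitting an uncountable compact set's worth of branches into countably many nowhere meager classes via Theorem~\ref{theoBaireResolve} applied with $\kappa = \omega$ and an $\omega$-witnessing family.

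I expect the main obstacle to be precisely this second-countability removal in the locally compact case. The safest way to close it is to note that every crowded locally compact Hausdorff space is a Baire space and to show it is $\omega$-resolvable by a localized argument: by Lemma~\ref{lemmGlobalToLocalRes} it suffices to resolve the whole space, and one can build an $\omega$-witnessing family by choosing, inside any nonempty open set, an uncountable compact subset via the tree construction of Lemma~\ref{lemmCompUnc}, which does not itself invoke second-countability. Since an $\omega$-witnessing family needs only $|\mathcal K| \leq \omega$ sets each of size $\geq \omega$, and the construction produces such sets locally, Theorem~\ref{theoBaireResolve} then yields Baire $\omega$-resolvability, hence $\omega$-resolvability. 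Finally, for the concluding ``in particular'' clause, I would observe that $\mathbb R$ is a crowded, completely metrizable, continuum-sized space and that the Cantor space $\bf C$ is a crowded, compact (hence locally compact) Hausdorff space, so both fall under the already-established cases, and the completeness of $\sf S5U$ for $\baire {\mathbb R}$ and $\baire {\bf C}$ follows immediately.
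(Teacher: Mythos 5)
Your reduction to the theorem immediately preceding this corollary is indeed the paper's intended route (the corollary is stated there without proof), and your treatment of the completely metrizable, continuum-sized case is correct and matches it: Theorem~\ref{theoResolve} gives a Baire $\mathfrak c$-resolution, and grouping its pieces into countably many blocks gives a Baire $\omega$-resolution, since any superset of a nowhere meager set is nowhere meager. You are also right to flag that the locally compact Hausdorff case drops the second-countability hypothesis of Theorem~\ref{theoResolve}, a gap the paper does not explicitly bridge. (The concrete instances $\mathbb R$ and the Cantor space are unaffected by this, since both are continuum-sized completely metrizable.)

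However, your proposed bridge for the locally compact case is not merely incomplete; it is impossible. No nonempty crowded $T_1$ space---in particular no crowded locally compact Hausdorff space---admits an $\omega$-witnessing family. Indeed, suppose $\mathcal K=\{K_n \mid n<\omega\}$ were one, and pick $x_n\in K_n$ for each $n$. The transversal $A=\{x_n\mid n<\omega\}$ is countable, and in a crowded $T_1$ space every singleton is nowhere dense, so $A$ is meager and hence somewhere meager; yet $A$ meets every $K_n$, contradicting the defining property of a witnessing family. This shows why the paper runs the witnessing-family machinery (Lemma~\ref{lemmDisjRef}, Theorem~\ref{theoBaireResolve}) only at $\kappa=\mathfrak c$, with compact sets of size at least $\mathfrak c$, and why Lemma~\ref{lemmCompactCount} (hence second countability, or the cardinality bound) is needed to keep the family of size at most $\mathfrak c$: one cannot trade the size of the family down to $\omega$ just because only an $\omega$-resolution is wanted; the resolution must be coarsened \emph{after} being produced at $\kappa=\mathfrak c$, exactly as you did in the metrizable case. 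Note also that Lemma~\ref{lemmGlobalToLocalRes} runs the wrong way for your purposes: it derives Baire resolvability of open subspaces from that of $X$ and provides no mechanism for gluing local resolutions into a global one. Closing the non-second-countable locally compact case therefore requires a genuinely different argument (for instance, gluing Baire resolutions over a maximal disjoint family of open sets with compact closure, together with a proof that crowded compact Hausdorff spaces are Baire $\omega$-resolvable), none of which is supplied by your sketch.
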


Corollary~\ref{corCompU} can be viewed as an analogue of the McKinsey-Tarski theorem for ${\sf S5U}$.
As mentioned in Remark~\ref{remQ}, a similar result does not hold for $\mathbb Q$, even though $\sf S4U$ is complete for $\mathbb Q$ but not $\mathbb R$ (see, e.g., \cite{She99}), so the picture in the Baire setting is a bit different from the original topological semantics. 

\begin{remark}
Corollary~\ref{corCompU} does not extend to uncountable languages in view of Remark \ref{remKappaDis} as the collection
\[ \{\exists \Box p_\iota\}_{\iota <\kappa} \cup \{ \neg \exists (p_\iota\wedge p_\varsigma)\}_{\iota<\varsigma<\kappa} \]
cannot be satisfied on, e.g., the real line if $\kappa>\omega$ since this would require $\val {p_\iota}=[U_\iota]$ with disjoint open sets $U_\iota$, which is impossible on $\mathbb R$. 
\end{remark}

\section{Concluding remarks}

We have shown that the Baire algebra of any topological space is a closure algebra, and that the logic of these algebras is $\sf S5$, providing variants of the celebrated McKinsey-Tarski theorem for the logic $\sf S5$.
We have identified Baire resolvability as a sufficient condition for strong completeness.
For such spaces, we have also constructed subalgebras of the Baire algebra whose logic is ${\sf S5}_n$ (for any finite $n$).
Given a cardinal $\lambda$, strong completeness also holds for variants of $\sf S5$ with $\lambda$-many variables provided that the space is Baire $\lambda$-resolvable.
In addition, we have shown that if the Baire algebra is $\omega$-disconnected, then strong completeness extends to ${\sf S5U}$.
Finally, we have shown that crowded spaces which are either complete continuum-sized metric or locally compact Hausdorff enjoy the above properties, leading to a large class of concrete examples of spaces for which $\sf S5$ and $\sf S5U$ are strongly complete.

This work follows \cite{Fer10,Lan12} in studying point-free semantics for modal logic based on quotients of 
the powerset algebras of topological spaces.
There are many other $\sigma$-ideals that may be used to define similar quotients, e.g.~the $\sigma$-ideal of countable subsets.
A general and systematic study of such quotients and their associated modal logics remains an
interesting line of research.

\subsection*{Acknowledgements}

We are grateful to Jan van Mill for fruitful discussion and many useful suggestions. This included drawing our attention to the 
Disjoint Refinement Lemma---our main tool in Section~4. 
We would also like to thank the referees whose comments have considerably improved the paper.

David Fernández-Duque was supported by the FWO-FWF Lead Agency grant G030620N (FWO)/I4513N (FWF) and by the SNSF--FWO Lead Agency Grant 200021L\_196176/G0E2121N.

\def\cprime{$'$}
\providecommand{\bysame}{\leavevmode\hbox to3em{\hrulefill}\thinspace}
\providecommand{\MR}{\relax\ifhmode\unskip\space\fi MR }
\providecommand{\MRhref}[2]{%
  \href{http://www.ams.org/mathscinet-getitem?mr=#1}{#2}
}
\providecommand{\href}[2]{#2}

\bibliographystyle{plain}
\bibliography{biblio}

\begin{thebibliography}{10}

\bibitem{BH12}
G.~Bezhanishvili and J.~Harding.
\newblock Modal logics of {S}tone spaces.
\newblock {\em Order}, 29(2):271--292, 2012.

\bibitem{CZ97}
A.~Chagrov and M.~Zakharyaschev.
\newblock {\em Modal logic}.
\newblock Oxford University Press, 1997.

\bibitem{CohnMeasure}
D.L. Cohn.
\newblock {\em Measure theory}.
\newblock Birkh{\"a}user/Springer, second edition.

\bibitem{Comfort96}
W.~W. Comfort and S.~Garc\'ia-Ferreira.
\newblock Resolvability: A selective survey and some new results.
\newblock {\em Topology and its Applications}, 74(1):149--167, 1996.

\bibitem{CN74}
W.~W. Comfort and S.~Negrepontis.
\newblock {\em The theory of ultrafilters}.
\newblock Springer-Verlag, New York, 1974.

\bibitem{Eng89}
R.~Engelking.
\newblock {\em General topology}.
\newblock Heldermann Verlag, Berlin, second ed. edition, 1989.

\bibitem{Fer10}
D.~Fern\'{a}ndez-Duque.
\newblock Absolute completeness of {$\sf S4_u$} for its measure-theoretic
  semantics.
\newblock In {\em Advances in Modal Logic}, volume~8, pages 100--119, London,
  2010. Coll. Publ.

\bibitem{BBBM15}
J.~Lucero-Bryan G.~Bezhanishvili, N.~Bezhanishvili and J.~van Mill.
\newblock {${\rm S}4.3$} and hereditarily extremally disconnected spaces.
\newblock {\em Georgian Mathematical Journal}, 22(4):469--475, 2015.

\bibitem{BBBM17}
J.~Lucero-Bryan G.~Bezhanishvili, N.~Bezhanishvili and J.~van Mill.
\newblock Krull dimension in modal logic.
\newblock {\em Journal of Symbolic Logic}, 82(4):1356--1386, 2017.

\bibitem{BBBM17a}
J.~Lucero-Bryan G.~Bezhanishvili, N.~Bezhanishvili and J.~van Mill.
\newblock A new proof of the {M}c{K}insey-{T}arski theorem.
\newblock {\em Studia Logica}, 106:1291--1311, 2018.

\bibitem{BMO20}
P.J.~Morandi G.~Bezhanishvili and B.~Olberding.
\newblock Specker algebras: A survey.
\newblock In H.~Nguyen and V.~Kreinovich, editors, {\em Algebraic Techniques
  and Their Use in Describing and Processing Uncertainty. To the Memory of
  Professor Elbert A. Walker.}, Studies in Computational Intelligence, pages
  1--19. Springer, 2020.

\bibitem{Gab01}
D.~Gabelaia.
\newblock Modal definability in topology.
\newblock Master's thesis, University of Amsterdam, 2001.

\bibitem{GH17}
R.~Goldblatt and I.~Hodkinson.
\newblock Spatial logic of tangled closure operators and modal mu-calculus.
\newblock {\em Ann. Pure Appl. Logic}, 168(5):1032--1090, 2017.

\bibitem{GP92}
V.~Goranko and S.~Passy.
\newblock Using the universal modality: gains and questions.
\newblock {\em J. Logic Comput.}, 2(1):5--30, 1992.

\bibitem{Hal56}
P.~R. Halmos.
\newblock Algebraic logic. {I}. {M}onadic {B}oolean algebras.
\newblock {\em Compositio Math.}, 12:217--249, 1956.

\bibitem{Hal63}
P.~R. Halmos.
\newblock Lectures on {B}oolean algebras.
\newblock In {\em Van Nostrand Mathematical Studies}, number~1. D. Van Nostrand
  Co., Inc., Princeton, N.J., 1963.

\bibitem{Hew43}
E.~Hewitt.
\newblock A problem of set-theoretic topology.
\newblock {\em Duke Math. J.}, 10:309--333, 1943.

\bibitem{Kop89}
S.~Koppelberg.
\newblock {\em Handbook of {B}oolean algebras. {V}ol. 1}.
\newblock North-Holland Publishing Co., Amsterdam, 1989.

\bibitem{Kre13}
P.~Kremer.
\newblock Strong completeness of {S}4 for any dense-in-itself metric space.
\newblock {\em Rev. Symb. Log.}, 6(3):545--570, 2013.

\bibitem{Kri63}
S.~A. Kripke.
\newblock Semantical analysis of modal logic. {I}. {N}ormal modal propositional
  calculi.
\newblock {\em Z. Math. Logik Grundlagen Math.}, 9:67--96, 1963.

\bibitem{Lan12}
T.~Lando.
\newblock Completeness of {$\sf S4$} for the {L}ebesgue measure algebra.
\newblock {\em J. Philos. Logic}, 41(2):287--316, 2012.

\bibitem{McK41}
J.~C.~C. McKinsey.
\newblock A solution of the decision problem for the {L}ewis systems {$\sf S2$}
  and {$\sf S4$}, with an application to topology.
\newblock {\em The Journal of Symbolic Logic}, 6:117--134, 1941.

\bibitem{MT44}
J.~C.~C. McKinsey and A.~Tarski.
\newblock The algebra of topology.
\newblock {\em Annals of Mathematics}, 45:141--191, 1944.

\bibitem{BRV01}
M.~de~Rijke P.~Blackburn and Y.~Venema.
\newblock {\em Modal logic}.
\newblock Cambridge University Press, 2001.

\bibitem{RS63}
H.~Rasiowa.
\newblock {\em The mathematics of metamathematics}.
\newblock PWN, Warschau, 1968.

\bibitem{Scr51}
S.J. Scroggs.
\newblock Extensions of the {L}ewis system {$S5$}.
\newblock {\em J. Symb. Log.}, 16:112--120, 1951.

\bibitem{She99}
V.~Shehtman.
\newblock ``{E}verywhere'' and ``here''.
\newblock {\em J. Appl. Non-Classical Logics}, 9(2-3):369--379, 1999.

\bibitem{Sik60}
R.~Sikorski.
\newblock {\em Boolean algebras}.
\newblock Springer-Verlag, Berlin, 1960.

\bibitem{Sto37b}
M.~Stone.
\newblock Topological representation of distributive lattices and {B}rouwerian
  logics.
\newblock {\em \v{C}asopis pe\v{s}t. mat. fys.}, 67:1--25, 1937.

\bibitem{Tar38}
A.~Tarski.
\newblock Der aussagenkalk\"{u}l und die topologie.
\newblock {\em Fund. Math.}, 31:103--134, 1938.

\bibitem{TC38}
T.~Tsao-Chen.
\newblock Algebraic postulates and a geometric interpretation for the {L}ewis
  calculus of strict implication.
\newblock {\em Bulletin of the American Mathematical Society}, 44:737--744,
  1938.

\end{thebibliography}

\end{document}